\newif\ifdraft 
\theoremstyle:=definition,remark,plain\do{%
        \expandafter\g@addto@macro\csname th@\theoremstyle\endcsname{%
            \addtolength\thm@preskip\parskip
            }%
        }
\z@skip \halign{\relax\hfil\txtline@@{##}\hfil\cr\leavevmode#1\crcr}}}
\theoremstyle{definition}
\newtheorem{thm}{Theorem}[section]
\newtheorem{lem}[thm]{Lemma}
\newtheorem{cor}[thm]{Corollary}
\newtheorem{defn}[thm]{Definition}
\newtheorem{propn}[thm]{Proposition}
\newtheorem*{thm*}{Theorem}
\newtheorem{notn}[thm]{Notation}
\newtheorem{qn}[thm]{Question}
\newtheorem{setup}[thm]{Setup}
\newtheorem{props}[thm]{Properties}
\theoremstyle{remark}
\newtheorem{rk}[thm]{Remark}
\newtheorem*{rk*}{Remark}
\newtheorem{rks}[thm]{Remarks}
\newtheorem*{rks*}{Remarks}
\newtheorem{ex}[thm]{Example}
\newtheorem*{ex*}{Example}
\newtheorem{exs}[thm]{Examples}
\newtheoremstyle{custthm}{\parskip}{}{\normalfont}{}{\bfseries}{.}{ }{\thmname{#1} \thmnote{#3}}
\theoremstyle{custthm}
\newcommand{\SPS}{\mathbf{SPS}}
\renewcommand{\div}{\mathrm{div}}
\newcommand{\supp}{\mathrm{supp}}
\newcommand{\LT}{\mathrm{LT}}
\newcommand{\LM}{\mathrm{LM}}
\newcommand{\LC}{\mathrm{LC}}
\newcommand\blfootnote[1]{%
  \begingroup
  \renewcommand\thefootnote{}\footnote{#1}%
  \addtocounter{footnote}{-1}%
  \endgroup
}
\begin{document}

\numberwithin{equation}{section}
\binoppenalty=\maxdimen
\relpenalty=\maxdimen

\title{Monomial methods in iterated local skew power series rings}
\author{Billy Woods}
\date{\today}
\maketitle
\begin{abstract}
Let $A = \mathbb{F}_p$ or $\mathbb{Z}_p$, and let $R = A[[x_1]][[x_2; \sigma_2, \delta_2]]\dots[[x_n;\sigma_n,\delta_n]]$, an iterated local skew power series ring over $A$. Under mild conditions, we show that (multiplicative) monomial orders exist, and develop the theory of Gr\"obner bases for $R$. We show that all rank-2 local skew power series rings over $\mathbb{F}_p$ satisfy polynormality, and give an example of a rank-2 local skew power series ring over $\mathbb{Z}_p$ which is a unique factorisation domain in the sense of Chatters-Jordan.
\blfootnote{\emph{2010 Mathematics Subject Classification}: 16L30, 16S99, 16U30, 16W25, 16W60, 16Z05.}

\end{abstract}

\tableofcontents

\section{Introduction}
 The main objects of study in this paper are certain kinds of iterated \emph{skew power series} rings: that is, given a base ring $A$ and skew derivations $(\sigma_i, \delta_i)$ satisfying certain conditions (see \S \ref{subsec: SPSRs} for definitions and details), we intend to study the topological ring
\begin{align}\label{eqn: SPSR}
R = A[[x_1; \sigma_1, \delta_1]][[x_2; \sigma_2, \delta_2]] \dots [[x_n; \sigma_n, \delta_n]].
\end{align}
Basic facts and questions about skew power series rings, with many references to the existing literature, are reviewed in \S \ref{subsec: SPSRs}.

 Much of the motivation for this paper stems from the theory of \emph{Iwasawa algebras}, whose definitions and properties are summarised in \S \ref{subsec: iwasawa algebras}. Briefly: these are completed group algebras $\mathbb{F}_p[[G]]$ or $\mathbb{Z}_p[[G]]$ of certain virtually pro-$p$ groups $G$, and arise very often in modern number theory, but are still not very well understood. We will see below that, for certain large classes of $G$, these Iwasawa algebras can be viewed as iterated skew power series rings in the sense appropriate to this paper.

 Unlike skew polynomial rings $R[x; \sigma, \delta]$, which are reasonably well-understood, skew power series rings $R[[x; \sigma, \delta]]$ remain mysterious. While skew power series rings are trickier to define, they have been known and studied in a sufficiently general context for our purposes since the beginning of the century \cite[\S 2]{venjakob}, \cite[\S 1]{SchVen06}. However, they largely remain computationally intractable. We hope to begin to rectify this situation with the current paper.

\subsection{Monomial orders}

 We begin by assuming that our base ring $A$ is \emph{either}
\begin{itemize}
\item a finite field $k$ of characteristic $p$, \emph{or}
\item a commutative complete discrete valuation ring whose residue field $k$ is a finite field of characteristic $p$.
\end{itemize}

 In the first case, and with $R$ as in (\ref{eqn: SPSR}), it is known that elements of $R$ have a unique representation with respect to the ordered tuple of variables $(x_1, \dots, x_n)$. Explicitly, writing $\bm{x}^\alpha := x_1^{\alpha_1} \dots x_n^{\alpha_n}$ for $\alpha = (\alpha_1, \dots, \alpha_n)\in \mathbb{N}^n$, we have
\begin{align}\label{eqn: R as a set}
R = \left\{ \sum_{\alpha\in \mathbb{N}^n} r_\alpha \bm{x}^\alpha \;\middle|\; r_\alpha\in k\right\}.
\end{align}
 Given appropriate conditions on the skew derivations $(\sigma_i, \delta_i)$, this $R$ will admit a \emph{monomial order} with respect to this tuple of variables: that is, a well-ordering $\preceq$ on $\mathbb{N}^n$, assigning to each nonzero $r = \sum_{\alpha\in \mathbb{N}^n} r_\alpha \bm{x}^\alpha \in R$ its \emph{least monomial} $\LM(r) = \min_{\preceq} \{\alpha\in\mathbb{N}^n : r_\alpha \neq 0\}$, such that this assignment respects the addition and multiplication in $R$ (Definition \ref{defn: monomial order}).

 Somewhat surprisingly, we are able to deal with the second case near-simultaneously by treating a uniformiser $\pi$ for $A$ as one of the variables. In this case, elements $r\in R$ still admit representations of a similar form: $r = \sum_\alpha r_\alpha \pi^{\alpha_0} x_1^{\alpha_1} \dots x_n^{\alpha_n}$, where now $\alpha = (\alpha_0, \dots, \alpha_n) \in\mathbb{N}^{n+1}$, and this representation is unique for an appropriate choice of restriction on the possible coefficients $r_\alpha$. (The coefficients $r_\alpha$ no longer necessarily behave nicely under addition and subtraction, but this does not prevent us from working with monomial orders: this somewhat awkward setup is perhaps most naturally viewed as extending the discrete $\pi$-adic valuation $A \to \mathbb{N}\cup\{\infty\}$ to a rank-$(n+1)$ valuation $\LM: R\to \mathbb{N}^{n+1}\cup\{\infty\}$.)

\subsection{Conditions on the skew power series extensions}

 Let $A$ satisfy the hypotheses of the previous subsection. In order for the iterated skew power series extension (\ref{eqn: SPSR}) to be defined, certain conditions must be imposed on the $(\sigma_i, \delta_i)$.

 If $B$ is a complete local ring with maximal ideal $\mathfrak{m}$, then the skew derivation $(\sigma, \delta)$ on $B$ is \emph{local} if $\sigma(\mathfrak{m}^j) = \mathfrak{m}^j$ and $\delta(\mathfrak{m}^j) \subseteq \mathfrak{m}^{j+1}$ for all $j\in\mathbb{N}$: this implies that $B[[x; \sigma, \delta]]$ exists and is again a complete local ring. We will assume throughout this paper that all $(\sigma_i, \delta_i)$ are local, so that $R$ is again complete local. This is the primary case of interest to Iwasawa algebras.

 \textit{Remark.} In fact, when dealing with Iwasawa algebras, it is usually possible to assume without much difficulty that $\delta(\mathfrak{m}^j) \subseteq \mathfrak{m}^{j+2}$. This is enough to imply that monomial orders exist: for instance, any additive total order refining the $\mathfrak{m}$-adic filtration, such as the degree lexicographic order, will be a monomial order. However, in practice, this order does not appear to be very useful: in our applications below, we will always use the \emph{lexicographic} order, suggesting that this order is likely to be of more use in general.

In order for the lexicographic order to be compatible with the multiplication in $R$, we will need to impose further restrictions on the $(\sigma_i, \delta_i)$. Specifically, we focus on the subclass of iterated local skew power series rings studied in the paper \cite{woods-SPS-dim}, namely those that are \emph{triangular}. Precise definitions are given in \S \ref{subsec: SPSRs} below.

This additional condition may seem restrictive, but it turns out \cite[\S 2.4]{woods-SPS-dim} that many natural and interesting examples of skew power series rings are indeed triangular, including $q$-commutative power series rings, nilpotent and supersoluble Iwasawa algebras, and certain completed quantum groups, so we hope that the current work will find broad application.

\subsection{Right-division, Gr\"obner bases and Buchberger's criterion}

 In all of the following theorems, we assume that \emph{either}
\begin{itemize}
\item $R = k[[x_1; \sigma_1, \delta_1]][[x_2; \sigma_2, \delta_2]] \dots [[x_n; \sigma_n, \delta_n]]$ ($k = \mathbb{F}_{p^m}$), \emph{or}
\item $R = A[[x_2; \sigma_2, \delta_2]][[x_3; \sigma_3, \delta_3]] \dots [[x_n; \sigma_n, \delta_n]]$ ($A$ a commutative complete discrete valuation ring with residue field $\mathbb{F}_{p^m}$, and we set $x_1$ to be a fixed uniformiser of $A$ -- note carefully the renumbered indices!),
\end{itemize}
where all skew derivations are local.

Our main results are as follows.

\begin{itemize}
\item (Existence of monomial orders.) If all skew derivations are \emph{triangular}, then the lexicographic order, the degree lexicographic order and the degree reverse lexicographic order are all monomial orders on $R$ (\textbf{Theorem \ref{thm: LM compatibility}}).
\item (Multivariate right-division.) 
Fix a monomial order $\preceq$ on $\mathbb{N}^n$, and let $F = (f_1, \dots, f_s)$ be an ordered tuple of elements of $R$. Then every $f\in R$ can be written as $f = q_1 f_1 + \dots + q_s f_s + r$, where the \emph{quotients} $q_1, \dots, q_s$ and the \emph{remainder} $r =: \overline{f}^F$ are unique subject to appropriate conditions on their support (\textbf{Theorem \ref{thm: division theorem}}).
\item (Gr\"obner bases.) Fix a monomial order $\preceq$ on $\mathbb{N}^n$, and let $\LM = \LM_\preceq: R\setminus \{0\} \to \mathbb{N}^n$ be the function sending nonzero $r\in R$ to the $\preceq$-\emph{least monomial} in its support. We define \emph{Gr\"obner bases} for nonzero left ideals $I\lhd R$ (\textbf{Definition \ref{defn: Groebner basis}}). If $G$ is a Gr\"obner basis for $I$, and $f\in R$ is arbitrary, then $f\in I$ if and only if its remainder on right-division by $G$ is zero (\textbf{Corollary \ref{cor: left ideal membership}}). Moreover, if $I\subseteq J$ are left ideals of $R$, and $\LM(I) = \LM(J)$, then $I = J$ (\textbf{Corollary \ref{cor: equality of nested ideals}}).
\item (Buchberger's criterion.) Suppose $I$ is a nonzero left ideal of $R$ and $G = (g_1, \dots, g_s)$ is a left ideal basis for $I$. Then $G$ is a Gr\"obner basis for $I$ if and only if each S-element $S(g_i,g_j)$ (\textbf{Definition \ref{defn: S-elements}}) has zero remainder on right-division by $G$ (\textbf{Theorem \ref{thm: S-elements determine Groebner bases}}).
\end{itemize}

We also show that the associated graded ring of $R$ with respect to $\LM$ is a $q$-commutative polynomial ring (\textbf{Corollary \ref{cor: associated graded is q-commutative}}), and prove a multivariate form of the Weierstrass preparation theorem (\textbf{Corollary \ref{cor: weierstrass}}), but we do not use these minor results in any serious way.

\subsection{Applications}

We give two explicit applications to skew power series rings.

The below example is simply a direct calculation using the lexicographic order $\preceq$, and does not rely on the multiplicativity of $\preceq$ or on the theory of Gr\"obner bases. However, we hope that it will be possible to extend these calculations to rings with more complicated structure using the subsequent results.

\textit{Example} \ref{ex: 3d over Zp, yx = xy + p^2}. The unique prime ideal of height 1 of the ring $R = \mathbb{Z}_p[[x, y \;|\; yx - xy = p^2]]$ is $pR$. In particular, $R$ is a noncommutative unique factorisation domain in the sense of \cite{chatters-jordan}.

Recall that a ring $R$ is called \emph{polynormal} if, whenever $J\subsetneq I$ are ideals of $R$, the ideal $I/J$ of $R/J$ contains a nonzero normal element. This property implies that $R$ is \emph{AR-separated}, and in particular satisfies the \emph{strong second layer condition}, a crucial property in the theory of localisation: see \cite[\S 8.1]{jategaonkar} for details.

\textbf{Corollary \ref{cor: 2d rings are polynormal}.} Let $(\sigma, \delta)$ be an arbitrary $\mathbb{F}_p$-linear, local skew derivation on $\mathbb{F}_p[[x]]$. Then $R = \mathbb{F}_p[[x]][[y; \sigma, \delta]]$ is polynormal.

We hope that this result can be extended to more general rings $R$: see Question \ref{qn}. In the special case where this $R = \mathbb{F}_p G$ is the Iwasawa algebra of the soluble group $\mathbb{Z}_p\rtimes \mathbb{Z}_p$ (see \cite[Example 2.2]{venjakob}), this result can be compared to \cite[Theorem 7.1]{venjakob}. See also \cite{jones-abelian-by-procyclic} for important structural results about the Iwasawa algebras of more general abelian-by-procyclic groups $G$.

\section{Preliminaries}

\subsection{Complete discrete valuation rings}\label{subsec: CDVRs}

 Throughout this paper, discrete valuation rings $A$ are always \emph{commutative}, and to avoid edge cases we will also assume that $J(A)\neq 0$, i.e. that they are not fields. We assume familiarity with the basic properties and terminology of discrete valuation rings, such as can be found at the beginning of \cite[Chapter I, \S 1]{Ser79}. 

Let $A$ be a complete discrete valuation ring with residue field $k = A/J(A)$. The natural quotient map $q: A\to k$ is of course surjective, and we will write $\iota: k\to A$ for an arbitrary map of sets which splits $q$, i.e. such that $q\circ \iota = \mathrm{id}_k$.

 \begin{lem}\label{lem: standard form in A}
Fix a uniformiser $x\in J(A)$. Then every element $a\in A$ can be written as $a = \sum_{j=0}^\infty a_j x^j$ for some choice of coefficients $a_j \in \iota(k)$, and moreover this representation is unique.
\end{lem}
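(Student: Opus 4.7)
The plan is a standard completeness-and-splitting argument, broken into existence and uniqueness.

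For existence, I would proceed by recursion. Given $a \in A$, define $a_0 = \iota(q(a)) \in \iota(k)$; then $q(a - a_0) = q(a) - q(\iota(q(a))) = 0$, so $a - a_0 \in J(A) = xA$. Hence there exists $a^{(1)} \in A$ with $a - a_0 = x a^{(1)}$. Iterating with $a^{(1)}$ in place of $a$ produces sequences $(a_j)_{j\geq 0} \subseteq \iota(k)$ and $(a^{(j)})_{j\geq 0} \subseteq A$ satisfying
\[
a - \sum_{j=0}^{n} a_j x^j = x^{n+1} a^{(n+1)}
\]
for every $n \geq 0$. Since $A$ is a complete DVR, it is $x$-adically complete and Hausdorff (indeed $\bigcap_n x^n A = 0$), so the partial sums $\sum_{j=0}^{n} a_j x^j$ converge to $a$.

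For uniqueness, suppose $\sum_{j\geq 0} a_j x^j = \sum_{j\geq 0} b_j x^j$ with all $a_j, b_j \in \iota(k)$. If the two sequences differ, let $m$ be the least index at which $a_m \neq b_m$. Subtracting and dividing by $x^m$ (which is legal since $A$ is a domain), we get $a_m - b_m \in xA = J(A)$, so $q(a_m) = q(b_m)$. Writing $a_m = \iota(c)$ and $b_m = \iota(d)$, the splitting property $q \circ \iota = \mathrm{id}_k$ forces $c = q(\iota(c)) = q(a_m) = q(b_m) = q(\iota(d)) = d$, hence $a_m = b_m$, a contradiction.

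There is no real obstacle: the only non-formal ingredients are $x$-adic completeness of $A$ (used to guarantee convergence in the existence step) and the fact that $\iota$ is a genuine set-theoretic section of $q$ (used to identify coefficients in $\iota(k)$ from their images in $k$). Everything else is bookkeeping.
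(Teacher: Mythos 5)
Your proof is correct and takes essentially the same inductive-recursion approach as the paper: peel off $\iota(q(\cdot))$ at each stage and appeal to $x$-adic completeness for convergence. Your uniqueness argument is spelled out more explicitly than the paper's one-line remark, but it encodes the same idea (the coefficient at each stage is forced by the section property of $\iota$).
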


\begin{proof}
Given $a\in A$, set $a_0 = \iota(q(a))$: then $a \equiv a_0$ mod $xA$. Then proceed inductively: if $a \equiv a_0 + a_1 x + \dots + a_{j-1} x^{j-1}$ mod $x^j A$ for any $j\geq 1$, write $a - (a_0 + a_1 x + \dots + a_{j-1} x^{j-1}) = a' x^j$ for some $a'\in A$, and choose $a_j = \iota(q(a'))$. Then $a \equiv a_0 + a_1 x + \dots + a_{j-1} x^{j-1} + a_j x^j$ mod $x^{j+1} A$, and as $A$ is complete, this sequence of partial sums $(\sum_{\ell=0}^j a_\ell x^\ell)_{j\geq 0}$ converges to $a$. This expression is unique as the choices of $a_j$ were unique at each stage.
\end{proof}

 \begin{defn}\label{defn: standard form in A}
We will say that the expression $a = \sum_{j=0}^\infty a_j x^j\in A$ is in \emph{standard form} (with respect to a fixed $\iota$ and $x$) if the coefficients $a_j$ lie in $\iota(k)$.
\end{defn}

In general, there is no hope for this standard form to be well-behaved with respect to addition, even with judicious choice of $\iota$ and $x$. However, it does have the following properties:

 \begin{props}\label{props: standard form}
Suppose that $a, b, c$ are written in standard form as above. Write $v$ for the (normalised) $x$-adic valuation function $A\to\mathbb{N}\cup\{\infty\}$.

\begin{enumerate}[label=(\roman*)]
\item $v(a) = m$ if and only if $a_m \neq 0$ and $a_i = 0$ for all $i < m$.
\item Suppose $v(a) = m$, $v(b) = n$, and $ab = c$. Then $v(c) = m+n$.
\item Suppose $v(a) = m$, $v(b) = n$, and $ab = c$. Then $a_m b_n \equiv c_{mn} \bmod J(A)$.
\item Suppose $v(a) = m \leq v(b) = n$, and $a+b=c$. Then $a_m + b_m \equiv c_m \bmod J(A)$.
\end{enumerate}
\end{props}

\begin{rk*}
The beginnings of the later theory are already visible here. If $0\neq a\in A$ is written in standard form as $a = \sum_{j=0}^\infty a_j x^j$, its \emph{least monomial} is $\LM(a) := v(a)$, and -- crucially -- this does not depend on our choice of $\iota$ or $x$. If $\LM(a) = n$, then the \emph{least coefficient} of $a$ is $\LC(a) = a_n$, and modulo $J(A)$ this also does not depend on $\iota$ or $x$. Properties (ii--iv) are properties that we will later expect of monomial orders in general.

We will also later assume that $\iota$ is multiplicative, at which point the congruence in part (iii) will become an equality inside $\iota(k)$.
\end{rk*}

\subsection{Skew power series rings}\label{subsec: SPSRs}

 A \emph{skew derivation} on a ring $R$ is a pair $(\sigma, \delta)$, where $\sigma$ is an automorphism of $R$ and $\delta$ is a \emph{(left) $\sigma$-derivation}, i.e. a linear endomorphism of $R$ such that $\delta(rs) = \delta(r) s + \sigma(r)\delta(s)$ for all $r,s\in R$. (Some authors allow more general endomorphisms $\sigma$, but we do not need this level of generality.)

 Any nontrivial skew derivation $(\sigma, \delta)$ on $R$ induces a (noncommutative) multiplication on the polynomial $R$-module $R[x]$ given by
\begin{align}\label{eqn: skew derivation multiplication formula}
xr = \sigma(r)x + \delta(r)
\end{align}
for all $r\in R$. The resulting ring is called a \emph{skew polynomial ring}, and is usually written $R[x;\sigma,\delta]$. These are now classical and reasonably well-understood objects, discussed in detail in the literature, e.g. \cite{MR,GooLet94,brown-goodearl}. Important examples of iterated skew polynomial rings include Weyl algebras, group algebras of poly-(infinite cyclic) groups, enveloping algebras of soluble Lie algebras, and many quantum groups.

 If the multiplication (\ref{eqn: skew derivation multiplication formula}) also extends to a continuous multiplication on the (formal) power series $R$-module $R[[x]]$, the resulting (topological) ring is then called a skew power series ring, and is denoted $R[[x; \sigma, \delta]]$.

 In the case where $\delta = 0$, skew power series rings (usually written $R[[x; \sigma]]$) are classical objects: \cite[Remark 1.9.5]{MR} dates them back to the late 19th century. While some of their basic ring-theoretic properties are now well-known \cite[\S 2.3]{Coh95}, \cite[\S 1.4, Theorem 7.5.3(v)]{MR}, much is still under development: among the many disparate papers dealing with these rings, we mention only \cite[\S 5]{SchVen06}, \cite[\S 2]{schneider-venjakob-localisations}, \cite{letzter-wang-goldie}.

 If $\delta \neq 0$, it is often \emph{not} the case that (\ref{eqn: skew derivation multiplication formula}) extends to a continuous multiplication on $R[[x]]$, i.e. the ring $R[[x; \sigma, \delta]]$ often does not exist. Several authors have studied more general cases: we mention Bergen and Grzeszczuk in particular, who have worked on the case where $\delta$ acts locally nilpotently on $R$, e.g. \cite{bergen-grzeszczuk-skew}. Unfortunately, in the primary cases of interest to us, even this is not enough: for us, $\delta$ will often only be \emph{topologically} nilpotent in an appropriate sense.

\begin{defn}
Let $(R, \mathfrak{m})$ be a complete local ring. Then the skew derivation $(\sigma, \delta)$ on $R$ is \emph{local} if $\sigma(\mathfrak{m}^i) = \mathfrak{m}^i$ and $\delta(\mathfrak{m}^i) \subseteq \mathfrak{m}^{i+1}$ for all $i\geq 0$.
\end{defn}

 If $R$ is a complete local ring and $(\sigma,\delta)$ is a local skew derivation on $R$, then the skew power series ring $R[[x; \sigma, \delta]]$ is known to exist \cite[\S 1]{SchVen06}, \cite[\S 2]{venjakob}. In this paper, we will restrict ourselves to such \emph{local} skew power series rings, and will not consider the question of existence any further. Many of the basic properties of local skew power series rings (or, more generally, \emph{filtered} skew power series rings) are proved in \cite{letzter-noeth-skew,schneider-venjakob-localisations,woods-SPS-dim,jones-woods-2}.  We single out one important fact: in this context, $R[[x; \sigma, \delta]]$ is again a complete local ring, so this construction can be iterated as follows.

\begin{props}\label{props: iterated SPSRs}
Let $R_0 = A$ be a complete local ring. Then, for $1\leq i\leq n$, suppose that $(\sigma_i, \delta_i)$ is a local skew derivation on $R_{i-1}$, and form the (complete local) ring $R_i = R_{i-1}[[x_i; \sigma_i, \delta_i]]$. Finally, set $R = R_n$, so that we have
$$R = A[[x_1; \sigma_1, \delta_1]] \dots [[x_n; \sigma_n, \delta_n]].$$
This is an iterated form of the construction studied in \cite[\S 3]{letzter-noeth-skew}; some special cases were studied in \cite{wang-quantum}. In the notation of \cite[\S 1]{woods-SPS-dim}, this was denoted $R\in \SPS^n(A)$. We record some basic properties of the ring $R$.

\begin{enumerate}[label=(\roman*)]
\item Every element $r\in R$ can be written as
\begin{align}\label{eqn: expression for r over A}
\displaystyle r = \sum_{\beta\in \mathbb{N}^n} a_\beta \bm{x}^\beta
\end{align}
for a unique choice of coefficients $a_\beta\in A$. (Here and elsewhere in the paper, if $\beta$ is the tuple $(\beta_1, \dots, \beta_n) \in \mathbb{N}^n$, we write as shorthand $\bm{x}^\beta := x_1^{\beta_1} \dots x_n^{\beta_n}$.) Conversely, given any family $(a_\beta)_{\beta\in\mathbb{N}^n}$ of elements of $A$, the sum $\sum_{\beta\in\mathbb{N}^n} a_\beta \bm{x}^\beta$ converges to a well-defined element of $R$.
\item The maximal ideal of $R$ is generated by the maximal ideal of $A$ and the elements $x_1, \dots, x_n$ \cite[\S 2]{venjakob}, \cite[Lemma 1.5]{woods-SPS-dim}.
\item Let $\mathfrak{m}$ be the maximal ideal of $R$. Then $\bigcap_{n\geq 0} \mathfrak{m}^n = 0$ \cite[\S 1]{SchVen06}, \cite[\S 2]{venjakob}, \cite[Proposition 3.7]{letzter-noeth-skew}.
\end{enumerate}
\end{props}

 \begin{defn}
$ $

\begin{enumerate}[label=(\roman*)]
\item Suppose that $A$ is a subring of $R$, and $(\sigma, \delta)$ is a skew derivation on $R$. We will say that $(\sigma, \delta)$ is \emph{$A$-linear} if $\sigma(a) = a$ and $\delta(a) = 0$ for all $a\in A$.
\item Suppose that $R_0 \subseteq \dots \subseteq R_i$ is a sequence of local rings, and $(\sigma, \delta)$ is a local skew derivation on $R_i$. We will say that $(\sigma, \delta)$ is \emph{triangular} (with respect to this sequence) if it restricts to a skew derivation of $R_j$ for each $0\leq j\leq i$: that is, if for each $0\leq j\leq i$, we have $\sigma(R_j) \subseteq R_j$ and $\delta(R_j) \subseteq R_j$.
\end{enumerate}
\end{defn}

As mentioned in the Introduction, the definitions in this subsection are broad enough to include many natural and interesting examples of iterated skew power series rings: see \cite[\S\S 2.4--2.5]{woods-SPS-dim} for examples of local, $A$-linear iterated skew power series rings and how they relate to the triangularity condition.

\subsection{Iwasawa algebras}\label{subsec: iwasawa algebras}

Fix a prime number $p$. A \emph{compact $p$-adic analytic group} may be most simply described as a closed subgroup of the topological group $GL_n(\mathbb{Z}_p)$ for some $n$ \cite[Definition 8.14]{DDMS}.

Much of what is known about the completed group rings $\mathbb{F}_p[[G]]$ and $\mathbb{Z}_p[[G]]$ of such groups (Iwasawa algebras) is presented in the summary paper \cite{ardakovbrown}. We mention \cite{ardakovInv,jones-abelian-by-procyclic,jones-primitive-ideals,woods-catenary,jones-woods-1,letzter-noeth-skew} as examples of more recent papers containing important structural results for subclasses of soluble Iwasawa algebras.

When studying compact $p$-adic analytic groups, it is often advantageous to begin with a large subclass of such groups that are relatively well-behaved, such as the \emph{uniform} groups \cite[Definition 4.1]{DDMS}. If $G$ is a uniform group, then in particular, $G$ admits an \emph{ordered basis}, i.e. there exists an ordered tuple $(g_1, \dots, g_n)$ of elements of $G$ such that the map
\begin{align*}
\mathbb{Z}_p^n &\to G\\
\alpha = (\alpha_1, \dots, \alpha_n) &\mapsto g_1^{\alpha_1} \dots g_n^{\alpha_n} =: \bm{g}^\alpha
\end{align*}
is a homeomorphism \cite[Theorem 4.9]{DDMS}. For each $g_i\in G$, set $x_i = g_i - 1 \in A[[G]]$: then in cases of interest, including the cases where $A = \mathbb{Z}_p$ or $\mathbb{F}_p$, we can write
$$A[[G]] = \left\{ \sum_{\alpha\in \mathbb{N}^d} c_\alpha \bm{x}^\alpha \;\middle|\; c_\alpha\in A\right\},$$
where again if $\alpha = (\alpha_1, \dots, \alpha_n)$ then $\bm{x}^\alpha$ denotes the ordered product $x_1^{\alpha_1} \dots x_n^{\alpha_n}$ \cite[Theorems 7.20 and 7.23(i)]{DDMS}. That is, as a topological $A$-module, $A[[G]]$ is simply a power series module: $A[[G]] \cong A[[x_1, \dots, x_n]]$. Unfortunately, in general, this isomorphism is far too complicated to allow direct computation with the resulting power series.

However, under these assumptions, if $G$ is a soluble group, the ordered basis $(g_1, \dots, g_n)$ can be chosen so that
$$A[[G]] \cong A[[x_1]][[x_2; \sigma_2, \delta_2]] \dots [[x_n; \sigma_n, \delta_n]],$$
a representation of $A[[G]]$ as an iterated local $A$-linear skew power series ring \cite[Example 2.3]{venjakob}. If $G$ is in fact \emph{supersoluble} (e.g. nilpotent), the above representation can also be made triangular \cite[Examples 2.15--2.16]{woods-SPS-dim}.

\subsection{Additive total orderings}\label{subsec: additive total orderings}

Later, we will be dealing with noncommutative analogues of rings such as $R = k[[x_1, \dots, x_n]]$ (for $k$ a commutative ring). Taking this as our blueprint for now, we assume that we are given a ring $R$ and a sequence of $n$ fixed elements $x_1, \dots, x_n\in R$.

\begin{defn}
Let $\alpha = (\alpha_1, \dots, \alpha_n)\in\mathbb{N}^n$. Then $\bm{x}^\alpha$ denotes the \emph{(standard) monomial} $x_1^{\alpha_1} \dots x_n^{\alpha_n}\in R$, and we will refer to $\alpha$ as the \emph{exponent} of $\bm{x}^\alpha$.
\end{defn}

\begin{rk}\label{rk: standard monomials}
Here, the adjective ``standard" denotes that the monomial can be written as $x_{i_1} x_{i_2} \dots x_{i_m}$ with $i_1 \leq i_2 \leq \dots \leq i_m$. We will need to briefly deal with \emph{generalised} monomials in \S \ref{subsec: generalised monomials} below, but we will otherwise use ``monomial" to mean ``standard monomial" by default.
\end{rk}

Assume now that every element $r\in R$ can be written uniquely as
$$r = \sum_{\alpha\in\mathbb{N}^n} r_\alpha \bm{x}^\alpha$$
for some choice of coefficients $r_\alpha\in C$, where $C$ is a fixed central subset of $R$. (As we might expect from our blueprint, this implies that the map $\mathbb{N}^n\to R$ defined by $\alpha \mapsto \bm{x}^\alpha$ is injective.) We set some notation and conventions.

\begin{defn}\label{defn: div, deg, support, etc}
$ $

\begin{enumerate}
\item 
Let $\alpha,\beta\in\mathbb{N}^n$. We will write $\alpha \leq_\div \beta$ for the \emph{divisibility} partial ordering
$$\alpha \leq_\div \beta \Leftrightarrow \beta - \alpha \in \mathbb{N}^n.$$
Note that this terminology is borrowed from the commutative theory: even if $\alpha\leq_\div \beta$ in $\mathbb{N}^n$, it usually does \emph{not} follow that $\bm{x}^\beta$ is divisible by $\bm{x}^\alpha$ as elements of $R$ if the multiplication of $R$ is noncommutative.
\item
Let $\alpha = (\alpha_1, \dots, \alpha_n)\in\mathbb{N}^n$. Then the \emph{total degree} of $\bm{x}^\alpha$ is defined to be $|\alpha| := \alpha_1 + \dots + \alpha_n$.
\item
Let $\preceq$ be a total well-ordering on $\mathbb{N}^n$. Then, given any nonzero element $r = \sum_{\alpha\in\mathbb{N}^n} r_\alpha \bm{x}^\alpha \in R$, where all $r_\alpha\in C$, we make the following definitions.
\begin{itemize}
\item The \emph{support} of $r$ is $\supp(r) = \{ \alpha : r_\alpha \neq 0\} \subseteq \mathbb{N}^n$.
\item The \emph{least monomial} of $r$ is $\LM_\preceq (r) = \min_\preceq (\supp(r))$. This will be denoted $\LM(r)$ when the ordering $\preceq$ is understood. (This is a minor abuse of terminology, as $\LM(r)$ is in fact an \emph{exponent} $\alpha$ rather than a \emph{monomial} $\bm{x}^\alpha$, but we blur the distinction between the two here for convenience.)
\item If $\LM(r) = \alpha$, the \emph{least coefficient} of $r$ is $\LC(r) = r_\alpha\in \iota(k)^\times$, and its \emph{least term} is $\LT(r) = r_\alpha \bm{x}^\alpha$.
\end{itemize}
When it is useful, we will set $\LM(0) = \infty$, a symbol with the property that $\alpha\prec\infty$ for all $\alpha\in\mathbb{N}^n$.
\end{enumerate}
\end{defn}

\textbf{Warnings.}

\begin{enumerate}
\item In many of our rings, there will not always be a unique choice of $C$, and so the \emph{support} might be better named the \emph{$C$-support}. Depending on the choice of $C$, the support may or may not behave well with respect to the arithmetic in $R$: see \S \ref{subsec: support calculations} below. After some initial setup, this extra flexibility will be useful.
\item As we will be dealing with power series rings, the definitions of $\LM$, $\LC$ and $\LT$ above are those appropriate to power series rings \cite[p. 525]{BecWei93}, the \emph{reverse} of the standard definitions found in the theory of commutative polynomial rings \cite[Chapter 2, \S 2, Definition 7]{CoxLitOSh07}.
\end{enumerate}

The following definition is standard in the commutative theory, though is usually called a \emph{monomial ordering} rather than an \emph{additive total ordering}. We require extra hypotheses on an additive total ordering to ensure that it interacts nicely with the structure of $R$, and so we reserve the former phrase for a stronger notion, to be discussed in \S \ref{subsec: monomial orders and compatibility with multiplication} below.

\begin{defn}
Let $\prec$ be a total ordering on $\mathbb{N}^n$. We will say that $\prec$ is \emph{additive} if, for all $\alpha,\beta,\gamma\in\mathbb{N}^n$,

\begin{enumerate}[label=(\roman*)]
\item if $\alpha \neq \bm{0}$ then $\bm{0} \prec \alpha$,
\item  if $\alpha \prec \beta$ then ${\alpha+\gamma} \prec {\beta+\gamma}$.
\end{enumerate}
\end{defn}

\begin{rks}\label{rks: well-ordering and refines div}
Let $\preceq$ be an additive total ordering. Then:

\begin{enumerate}[label=(\roman*)]
\item $\preceq$ is a well-ordering \cite[Lemma 1.3.2]{pangalos}.
\item $\preceq$ refines the partial order $\leq_\div$: if $\alpha \leq_\div \beta$, then $\beta-\alpha \in\mathbb{N}^n$, so $\bm{0} \preceq \beta-\alpha$ and hence $\alpha \preceq \beta$.
\end{enumerate}
\end{rks}

\begin{exs}\label{exs: orderings}
The following are well-known additive total orderings \cite[Chapter 2, \S 2, Definitions 3, 5 and 6]{CoxLitOSh07}, and we repeat their definitions to fix notation.

\begin{enumerate}
\item The \emph{lexicographic order} $\prec \; = \; \prec_{\mathrm{lex}}$ is defined by the property
$$\alpha \preceq \beta \Leftrightarrow \alpha = \beta \text{ or the leftmost nonzero entry of } \beta-\alpha \in \mathbb{Z}^n \text{ is positive}.$$
\item The \emph{degree} (or \emph{graded}) \emph{lexicographic order} $\prec \; = \; \prec_{\mathrm{deglex}}$ is defined by the property
$$\alpha \preceq \beta \Leftrightarrow \begin{cases}
|\alpha| < |\beta| &\text{ or}\\
\alpha = \beta &\text{ or} \\
|\alpha| = |\beta| &\text{ and the leftmost nonzero entry of } \beta-\alpha \in \mathbb{Z}^n \text{ is positive}.
\end{cases}$$
\item The \emph{degree} (or \emph{graded}) \emph{reverse lexicographic order} $\prec \; = \; \prec_{\mathrm{degrevlex}}$ is defined by the property
$$\alpha \preceq \beta \Leftrightarrow \begin{cases}
|\alpha| < |\beta| &\text{ or}\\
\alpha = \beta &\text{ or} \\
|\alpha| = |\beta| &\text{ and the rightmost nonzero entry of } \beta-\alpha \in \mathbb{Z}^n \text{ is negative}.
\end{cases}$$
\end{enumerate}

All three of these induce the same ordering $x_n \prec x_{n-1} \prec \dots \prec x_1$ on the variables.
\end{exs}

\subsection{Ordinals}

 For the convenience of the reader, we recall some set-theoretic notions. An overview can be found in \cite[\S\S 4.2--4.3]{Cie97} or \cite[Chapter 7]{End77}.

\begin{defn}
Let $X$ be a set and $\preceq$ be a fixed well-ordering on $X$. 

\begin{enumerate}[label=(\roman*)]
\item Take $x,y\in X$. If $x\prec y$, but there exists no $z\in X$ such that $x\prec z\prec y$, then we will say that $y$ is the \emph{successor} of $x$, or equivalently $x$ is the \emph{predecessor} of $y$.
\item If $x\in X$ has a predecessor, then $x$ is called a \emph{successor ordinal}. Otherwise, $x$ is called a \emph{limit ordinal}.
\item Take a subset $Y\subseteq X$. Then $x\in X$ is an \emph{upper bound for $Y$} if $y\preceq x$ for all $y\in Y$. If $Y$ has an upper bound, we will say it is \emph{bounded above}.
\item Suppose $Y\subseteq X$ is bounded above. Then its \emph{supremum} $\sup(Y)$ is its least upper bound, $\sup(Y) := \min_{\preceq} \{x\in X: y\preceq x \text{ for all } y\in Y\}$.
\end{enumerate}
\end{defn}

\begin{rk*}
Suppose $Y\subseteq X$ is bounded above (and nonempty, to avoid trivialities). Note that, if $Y$ is finite, then we always have $\sup(Y) \in Y$.

Even if $Y$ is infinite: if $\sup(Y)$ is a successor ordinal, then $\sup(Y)\in Y$. Indeed, by definition, $\sup(Y)$ is the minimal element $s\in X$ such that $y\preceq s$ for all $y\in Y$: in particular, if $x$ denotes the predecessor in $X$ of $s$, then $x$ is \emph{not} the supremum of $Y$, so by definition there must exist an element $y\in Y$ such that $y\preceq s$ and $y\succ x$. This element must be $s$.

On the other hand, if $Y$ is infinite and $\sup(Y)$ is a limit ordinal, we need not have $\sup(Y)\in Y$.
\end{rk*}

\begin{lem}\label{lem: replace indexing set by N}
Suppose that $Y\subseteq X$ is bounded above and $\sup(Y) = \alpha$ is a limit ordinal which is not an element of $Y$. Suppose also that $Y$ is countably infinite. Then there is an $\mathbb{N}$-indexed sequence $y_0 \prec y_1 \prec \dots \in Y$ such that $\sup\{y_0, y_1, \dots\} = \alpha$.
\end{lem}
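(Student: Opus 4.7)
My plan is to exploit the countability of $Y$ by listing its elements and then constructing the desired sequence by a recursive zig-zag which, at stage $n$, forces the new term to dominate the $n$-th enumerated element of $Y$. This will automatically make the resulting $\mathbb{N}$-indexed sequence cofinal in $Y$, which is exactly what is needed to equate its supremum with $\sup(Y) = \alpha$.

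The single preliminary to dispatch is that $Y$ has no $\preceq$-maximum: any $y \in Y$ satisfies $y \prec \alpha$ (since $\alpha$ is an upper bound of $Y$ but does not lie in $Y$), so if $y$ were itself an upper bound of $Y$ we would contradict the minimality of $\alpha$. Hence some $y' \in Y$ with $y' \succ y$ always exists --- this is essentially the only place where the hypothesis $\alpha \notin Y$ (equivalently, the limit-ordinal hypothesis combined with the remark preceding the lemma) enters.

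Now fix any set-theoretic enumeration $Y = \{z_0, z_1, z_2, \dots\}$ (unrelated to $\preceq$). Recursively set $y_0 := z_0$ and, given $y_n$, take
\begin{align*}
y_{n+1} \;:=\; \begin{cases} z_{n+1} & \text{if } z_{n+1} \succ y_n, \\ \text{any element of } Y \text{ strictly } \succ y_n & \text{otherwise,} \end{cases}
\end{align*}
the second case being legitimate by the preliminary above. The sequence is strictly $\preceq$-increasing by construction, and a short induction shows $y_n \succeq z_n$ for every $n$: in the recursive step either $y_{n+1} = z_{n+1}$, or else $y_{n+1} \succ y_n \succeq z_{n+1}$. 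Therefore $\sup\{y_n : n \in \mathbb{N}\}$ is an upper bound for $\{z_n : n \in \mathbb{N}\} = Y$, forcing $\sup\{y_n\} \succeq \sup Y = \alpha$, while the opposite inequality is immediate from each $y_n \in Y$ satisfying $y_n \prec \alpha$.

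There is no genuine obstacle here; the only subtlety is that one must couple each recursive step to the next enumerated element of $Y$, since otherwise the supremum of the $y_n$ might stall at some limit ordinal strictly below $\alpha$.
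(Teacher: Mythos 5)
Your proof is correct and takes essentially the same approach as the paper: both fix an enumeration of $Y$ and then build a strictly increasing sequence by a greedy recursion, using the fact that $Y$ has no maximum (which follows from $\alpha\notin Y$). The only difference is in bookkeeping: the paper extracts a subsequence $\varphi(i_0),\varphi(i_1),\dots$ of the enumeration (choosing $i_m$ minimal so that $\varphi(i_m)\succ\varphi(i_{m-1})$) and then argues cofinality via minimality of the $i_m$, whereas you force $y_n\succeq z_n$ at each stage, which makes cofinality immediate; your version is arguably a shade cleaner, but it is the same underlying idea.
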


\begin{proof}
As $Y$ is countably infinite, we may choose a bijection $\varphi:\mathbb{N}\to Y$. Let $i_0 = 0$, and for all $m\geq 1$, let $i_m$ be the minimal natural number satisfying $i_m > i_{m-1}$ and $\varphi(i_m) \succ \varphi(i_{m-1})$ (which exists as $\varphi(i_{m-1})$ is not the supremum of $Y$). Take $y_\ell = \varphi(i_\ell)$ for all $\ell \geq 0$.

Now $\alpha$ is an upper bound for $\{y_\ell\}$, so it remains only to show that $\sup_\ell\{y_\ell\} \succeq \alpha$. But for all $\beta \prec \alpha$, there exists $y\in Y$ such that $\beta \prec y$, say $y = \varphi(j)$; and there exists $m$ such that $i_m \geq j$, which by construction implies that $\varphi(i_m) \succeq \varphi(j)$ and hence $\beta \prec y_m$.
\end{proof}

\subsection{The topology on $R$}\label{subsec: topology}

Let $R$ be a ring with maximal ideal $\mathfrak{m}$ satisfying $\bigcap_{n\in\mathbb{N}} \mathfrak{m}^n = 0$. (This is satisfied by our rings of interest by Property \ref{props: iterated SPSRs}(iii).) Give $R$ the (separated) $\mathfrak{m}$-adic filtration in the sense of \cite[Chapter I, Example 2.3B]{LVO} (see also \cite[\S 2]{venjakob} or \cite[3.1]{letzter-noeth-skew}), or equivalently the $\mathfrak{m}$-adic norm induced by this filtration as in \cite[Lemma 6.5]{DDMS}. We will further assume that $R$ is complete with respect to the topology given by this filtration (or norm).

Explicitly, we can understand convergence in $R$ as follows:

\begin{defn}\cite[Definition 6.8(i)]{DDMS}
Let $T$ be a countably infinite indexing set, and let $(r_\alpha)_{\alpha\in T}$ be a family of elements of $R$. Then $(r_\alpha)$ \emph{converges to} $r\in R$ if, for all $N > 0$, there exists a finite subset $T'\subseteq T$ such that $r - r_\alpha\in \mathfrak{m}^N$ for all $\alpha\in T\setminus T'$.
\end{defn}

As $R$ is complete in the sense of \cite[Definition 6.2]{DDMS}, we can rephrase this in terms of Cauchy sequences.

\begin{lem}\label{lem: cauchy criterion} 
Let $T$ be a countably infinite indexing set, and let $(r_\alpha)_{\alpha\in T}$ be a family of elements of $R$. Then $(r_\alpha)$ converges in $R$ if and only if, for all $N > 0$, there exists a finite subset $T'\subseteq T$ such that $r_\alpha - r_\beta\in \mathfrak{m}^N$ for all $\alpha, \beta\in T\setminus T'$. \qed
\end{lem}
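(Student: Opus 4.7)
The plan is to prove the two directions separately, with the forward direction being a direct triangle-inequality argument and the converse requiring us to reduce the family-based notion of convergence to the classical sequential one using the countability of $T$ and the completeness of $R$.

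For the forward direction, suppose $(r_\alpha)$ converges to some $r\in R$. Given $N>0$, by definition there is a finite $T'\subseteq T$ such that $r-r_\alpha \in \mathfrak{m}^N$ for all $\alpha\in T\setminus T'$. For any $\alpha,\beta\in T\setminus T'$, I would then write $r_\alpha - r_\beta = (r_\alpha-r)+(r-r_\beta)$, which lies in $\mathfrak{m}^N$ since $\mathfrak{m}^N$ is an additive subgroup.

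For the converse, which is the substantive direction, I would first fix a bijection $\varphi:\mathbb{N}\to T$ and set $s_n = r_{\varphi(n)}$. The Cauchy-type hypothesis on the family transfers to an ordinary Cauchy condition on the sequence $(s_n)$: given $N$, the finite set $T'$ provided by the hypothesis intersects $\varphi(\mathbb{N})$ in only finitely many indices, so there is an $M$ with $\varphi(n)\notin T'$ whenever $n\geq M$; then $s_n-s_m\in\mathfrak{m}^N$ for all $n,m\geq M$. Since $R$ is complete in the sense cited from DDMS, the sequence $(s_n)$ converges to some element $s\in R$.

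It then remains to verify that the whole family $(r_\alpha)_{\alpha\in T}$ converges to $s$. Given $N$, take a finite $T'\subseteq T$ witnessing the hypothesis for exponent $N$, and enlarge it to also contain $\varphi(0),\dots,\varphi(M)$ for some large $M$ such that $s_m - s\in\mathfrak{m}^N$ for all $m\geq M$. For any $\alpha\in T\setminus T'$ and any sufficiently large $m$, both $r_\alpha - s_m$ and $s_m-s$ lie in $\mathfrak{m}^N$, hence so does $r_\alpha - s$. (Equivalently, one can invoke that $\mathfrak{m}^N$ is closed in the $\mathfrak{m}$-adic topology, since its complement is a union of cosets of $\mathfrak{m}^N$, and pass to the limit in $r_\alpha - s_m \to r_\alpha - s$.)

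I do not expect any real obstacle here; the only mild subtlety is bookkeeping when passing between the family-indexed notion of convergence and the classical sequence-indexed one, and in particular ensuring one is entitled to enlarge the exceptional finite set $T'$ to swallow both the hypothesised exceptional set and an initial segment of the enumeration $\varphi$. Both adjustments preserve finiteness, so no genuine difficulty arises.
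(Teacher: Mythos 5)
Your proof is correct, and since the paper states this lemma with a \qed and no argument at all (regarding it as an immediate consequence of completeness in the sense of DDMS), you are simply supplying the routine verification the author chose to omit. Both directions are sound: the forward direction is the standard telescoping $r_\alpha - r_\beta = (r_\alpha - r) + (r - r_\beta)$, and the converse correctly converts the family-Cauchy condition into a sequential Cauchy condition via an enumeration $\varphi$, invokes completeness to get a limit $s$, and then transfers convergence back to the whole family. The one cosmetic remark is that enlarging $T'$ to contain $\varphi(0),\dots,\varphi(M)$ is not actually needed: for $\alpha$ outside the original finite Cauchy-witnessing set, one can already choose $m$ large enough that $\varphi(m)$ lies outside that set \emph{and} $s_m - s \in \mathfrak{m}^N$, and then conclude $r_\alpha - s = (r_\alpha - s_m) + (s_m - s) \in \mathfrak{m}^N$; but the enlargement is harmless since any larger finite exceptional set still certifies convergence.
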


We will use this to prove the below proposition, which crucially uses the fact that $k$ is finite.

\begin{propn}\label{propn: convergent subsequences in lex ordering}
Let $T\subseteq \mathbb{N}^n$ be an (infinite) indexing set whose supremum is a limit ordinal $\alpha\not\in T$, and let $(r_\beta)_{\beta\in T}$ be a family of elements of $R$. Then there exists $T'\subseteq T$ satisfying $\sup(T') = \alpha$ such that the subfamily $(r_\beta)_{\beta\in T'}$ converges.
\end{propn}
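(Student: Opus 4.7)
The approach is to combine Lemma \ref{lem: replace indexing set by N} with a standard diagonal extraction argument, using the finiteness of $k$ to apply the pigeonhole principle. First, $T$ is countably infinite since $T \subseteq \mathbb{N}^n$, so I would invoke Lemma \ref{lem: replace indexing set by N} to pass to a strictly $\preceq$-increasing $\mathbb{N}$-indexed sequence $\beta_0 \prec \beta_1 \prec \dots$ in $T$ with supremum $\alpha$. The key set-theoretic observation to record at this stage is that, because $\alpha$ is a limit ordinal, \emph{every} infinite subsequence of $(\beta_i)$ still has supremum $\alpha$: any $\gamma \prec \alpha$ is eventually exceeded by the $\beta_i$'s, and so is also eventually exceeded by the subsequence. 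This reduces the problem to extracting a Cauchy subsequence of $(r_{\beta_i})$ indexed by an infinite subset of $\mathbb{N}$.

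Next I would establish that $R/\mathfrak{m}^N$ is \emph{finite} for each $N$ — this is where the hypothesis that $k$ is finite gets used. By Property \ref{props: iterated SPSRs}(ii), $\mathfrak{m}$ is finitely generated, so each $\mathfrak{m}^i/\mathfrak{m}^{i+1}$ is a finite-dimensional $k$-vector space, hence a finite set; induction on $N$ along the short exact sequences $0 \to \mathfrak{m}^N/\mathfrak{m}^{N+1} \to R/\mathfrak{m}^{N+1} \to R/\mathfrak{m}^N \to 0$ then yields the claim.

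With this in hand, I would run a standard diagonal argument. Set $S_0 = \mathbb{N}$; inductively, given $S_N$ infinite, the map $S_N \to R/\mathfrak{m}^{N+1}$ sending $i \mapsto r_{\beta_i} + \mathfrak{m}^{N+1}$ has finite codomain, so by pigeonhole some fibre $S_{N+1}$ is infinite. Then I would choose indices $i_0 < i_1 < \dots$ with $i_N \in S_N$ for each $N$ (possible since each $S_N$ is infinite) and set $T' = \{\beta_{i_N} : N \in \mathbb{N}\} \subseteq T$. By the first paragraph $\sup(T') = \alpha$; and for all $k, \ell \geq N$ the indices $i_k, i_\ell$ both lie in $S_N$, so $r_{\beta_{i_k}} \equiv r_{\beta_{i_\ell}} \pmod{\mathfrak{m}^{N+1}}$, whence $(r_\beta)_{\beta \in T'}$ is Cauchy in the sense of Lemma \ref{lem: cauchy criterion} and converges by completeness of $R$.

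The main conceptual obstacle is that the two requirements on $T'$ — cofinality in $T$ and Cauchyness of $(r_\beta)_{\beta \in T'}$ — pull in different directions: the diagonal construction secures Cauchyness automatically, so all the delicate work lies in guaranteeing that cofinality is preserved. The observation in the first paragraph handles exactly this, and works precisely because $\alpha$ is a limit ordinal and each $S_N$ is infinite.
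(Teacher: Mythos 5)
Your argument is essentially the same as the paper's: pass to an $\mathbb{N}$-indexed increasing cofinal sequence via Lemma \ref{lem: replace indexing set by N}, use finiteness of $R/\mathfrak{m}^N$ and pigeonhole to extract a nested chain of infinite index sets that stabilise modulo successive powers of $\mathfrak{m}$, and diagonalise. Two small points in your favour: you isolate upfront the observation that every infinite subset of an $\mathbb{N}$-indexed increasing sequence with limit-ordinal supremum $\alpha$ is still cofinal at $\alpha$ (the paper instead carries the constraint ``has supremum $\alpha$'' through each inductive step, which is equivalent but slightly heavier), and you make explicit why $R/\mathfrak{m}^N$ is finite, which the paper's inductive step uses but does not spell out. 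One trivial slip: for $k,\ell\geq N$ your construction gives $r_{\beta_{i_k}}\equiv r_{\beta_{i_\ell}}\pmod{\mathfrak{m}^N}$, not $\pmod{\mathfrak{m}^{N+1}}$, since membership in $S_N$ encodes agreement at level $N$; this off-by-one does not affect the conclusion.
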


\begin{proof}
By Lemma \ref{lem: replace indexing set by N}, we can assume without loss of generality that $T = \{\beta_0, \beta_1, \dots\}$, where $\beta_0 \prec \beta_1 \prec \dots$, i.e. $T$ is $\mathbb{N}$-indexed and increasing. Consider the family of maps $\varphi_i: T\to R/\mathfrak{m}^{i+1}$ defined by $\varphi_i(\beta) = r_\beta + \mathfrak{m}^{i+1}$.

As $R/\mathfrak{m} = k$ is finite, there must be some $a_0\in k$ such that $\varphi_0^{-1}(a_0)$ is infinite and has supremum $\alpha$. Fix such an $a_0$, and let $T_0 = \varphi^{-1}(a_0)$ and $\gamma_0 = \min(T_0)$.

Now, for all $i\geq 1$, choose inductively $a_i\in R/\mathfrak{m}^{i+1}$ such that
\begin{itemize}[noitemsep]
\item $a_i + \mathfrak{m}^i = a_{i-1}$,
\item $T_i := (\varphi_i^{-1}(a_i)\cap T_{i-1}) \setminus \{\gamma_0, \dots, \gamma_{i-1}\}$ is infinite and has supremum $\alpha$,
\end{itemize}

and set $\gamma_i := \min(T_i)$. Finally, set $T' = \{\gamma_0, \gamma_1, \dots\}$, and note that $\gamma_0 \prec \gamma_1 \prec \dots$. Then, by construction, the family $(r_\beta)_{\beta\in T'}$ is Cauchy, and so converges by Lemma \ref{lem: cauchy criterion}; and as $T'$ is an infinite subsequence of $T$, it also has supremum $\alpha$ as required.
\end{proof}

\section{First calculations}

\subsection{Setup, notation and conventions}

\begin{setup}\label{setup}
We will construct iterated local skew power series rings $R$ over a base ring $A$ in one of two ways as follows. It will be convenient to adopt similar notation for each, so we spell out both cases below.
\end{setup}

\begin{enumerate}
\item[\textbf{(A)}] Let $A = k$ be a finite field of characteristic $p$, and set $R_0 = A$ with maximal ideal $\mathfrak{m}_0 = 0$. Then, for each $1\leq i\leq n$, proceed inductively as follows:
\begin{itemize}
\item Assume that the local rings $(R_0, \mathfrak{m}_0) \subseteq \dots \subseteq (R_{i-1}, \mathfrak{m}_{i-1})$ have all been defined. Take a local, $A$-linear skew derivation $(\sigma_i, \delta_i)$ on $R_{i-1}$. Set $R_i = R_{i-1}[[x_i; \sigma_i, \delta_i]]$, and call its maximal ideal $\mathfrak{m}_i$.
\end{itemize}
Finally, set $R = R_n$, so that overall we have $R = A[[x_1]][[x_2;\sigma_2,\delta_2]]\dots [[x_n;\sigma_n,\delta_n]].$
\item[\textbf{(A${}_\triangle$)}] As in \textbf{(A)}, but with the additional assumption that each skew derivation $(\sigma_i, \delta_i)$ is \emph{triangular} with respect to the sequence $(R_0, \mathfrak{m}_0) \subseteq \dots \subseteq (R_{i-1}, \mathfrak{m}_{i-1})$.
\item[\textbf{(B)}] Let $A$ be a complete discrete valuation ring of characteristic $0$, with residue field $k$ a finite field of characteristic $p$. Set $R_1 = A$ with maximal ideal $\mathfrak{m}_1$ generated by a fixed uniformiser $x_1$. Then, for each $2\leq i\leq n$, proceed inductively as follows:
\begin{itemize}
\item Assume that the local rings $(R_1, \mathfrak{m}_1) \subseteq \dots \subseteq (R_{i-1}, \mathfrak{m}_{i-1})$ have all been defined. Take a local, $A$-linear skew derivation $(\sigma_i, \delta_i)$ on $R_{i-1}$ which is triangular with respect to this sequence. Set $R_i = R_{i-1}[[x_i; \sigma_i, \delta_i]]$, and call its maximal ideal $\mathfrak{m}_i$.
\end{itemize}
Finally, set $R = R_n$, so that overall we have $R = A[[x_2]][[x_3;\sigma_3,\delta_3]]\dots [[x_n;\sigma_n,\delta_n]].$
\item[\textbf{(B${}_\triangle$)}] As in \textbf{(B)}, but with the additional assumption that each skew derivation $(\sigma_i, \delta_i)$ is \emph{triangular} with respect to the sequence $(R_1, \mathfrak{m}_1) \subseteq \dots \subseteq (R_{i-1}, \mathfrak{m}_{i-1})$.
\end{enumerate}

Cases \textbf{(A${}_\triangle$)} and \textbf{(B${}_\triangle$)} will be the primary cases of interest to us, but we will sometimes work in the more general cases \textbf{(A)} and \textbf{(B)}.

\begin{propn}\label{propn: uniqueness of representation}
In either case, let $C = \iota(k)$ be an arbitrary section of the natural quotient map $A\to k$. Then every element $r\in R$ can be written uniquely as
$$r = \sum_{\alpha\in\mathbb{N}^n} r_\alpha \bm{x}^\alpha$$
for some choice of coefficients $r_\alpha\in C$. Conversely, given any choice of coefficients $r_\alpha\in C$, the sum $\sum_{\alpha\in\mathbb{N}^n} r_\alpha \bm{x}^\alpha$ converges in $R$.
\end{propn}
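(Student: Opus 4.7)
The plan is to reduce the statement to results already proved in the preliminaries: Property \ref{props: iterated SPSRs}(i) (the analogous representation but with coefficients in $A$) and Lemma \ref{lem: standard form in A} (the standard form for elements of a CDVR).

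In case \textbf{(A)}, we have $A = k = \iota(k) = C$, so the statement is nothing but Property \ref{props: iterated SPSRs}(i) applied to the construction of $R$ over $A = k$. Nothing further is required.

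Case \textbf{(B)} requires more work. The key preliminary observation is that $x_1$ is \emph{central} in $R$: since each skew derivation $(\sigma_i, \delta_i)$ ($i \geq 2$) is $A$-linear, we have $\sigma_i(x_1) = x_1$ and $\delta_i(x_1) = 0$, so that $x_i x_1 = \sigma_i(x_1) x_i + \delta_i(x_1) = x_1 x_i$; combined with commutativity of $A$, an easy induction then shows that $x_1$ commutes with every element of $R$. In particular, for any $\alpha \in \mathbb{N}^n$, the product $x_1^{\alpha_1} x_2^{\alpha_2} \cdots x_n^{\alpha_n}$ equals $a \cdot x_2^{\alpha_2} \cdots x_n^{\alpha_n}$ where $a = x_1^{\alpha_1} \in A$, so the rearrangement below is legitimate.

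The existence half of the proposition then goes as follows. Given $r \in R$, apply Property \ref{props: iterated SPSRs}(i) to the iterated SPSR construction of $R$ over $A$: this yields a unique expression $r = \sum_{\beta \in \mathbb{N}^{n-1}} a_\beta x_2^{\beta_2} \cdots x_n^{\beta_n}$ with $a_\beta \in A$. Now apply Lemma \ref{lem: standard form in A} to each $a_\beta$, writing uniquely $a_\beta = \sum_{j \geq 0} c_{j,\beta} x_1^j$ with $c_{j,\beta} \in C = \iota(k)$. Substituting and using centrality of $x_1$, we collect terms as $r = \sum_{\alpha \in \mathbb{N}^n} r_\alpha \bm{x}^\alpha$ with $r_\alpha := c_{\alpha_1,\,(\alpha_2,\dots,\alpha_n)}$. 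For the converse convergence statement, given arbitrary $(r_\alpha) \subset C$, we run the same argument backwards: for each fixed $\beta \in \mathbb{N}^{n-1}$ the inner sum $a_\beta := \sum_j r_{(j,\beta)} x_1^j$ converges in $A$ by completeness of $A$, and then $\sum_\beta a_\beta x_2^{\beta_2} \cdots x_n^{\beta_n}$ converges in $R$ by the convergence half of Property \ref{props: iterated SPSRs}(i).

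For uniqueness, suppose $\sum_\alpha r_\alpha \bm{x}^\alpha = 0$ with all $r_\alpha \in C$. Grouping as above produces coefficients $a_\beta = \sum_j r_{(j,\beta)} x_1^j \in A$ with $\sum_\beta a_\beta x_2^{\beta_2} \cdots x_n^{\beta_n} = 0$; the uniqueness clause of Property \ref{props: iterated SPSRs}(i) forces each $a_\beta = 0$, and then the uniqueness clause of Lemma \ref{lem: standard form in A} forces each $r_{(j,\beta)} = 0$. The main technical point throughout is justifying the rearrangement of the double sum, but this is routine given $\bigcap_n \mathfrak{m}^n = 0$ (Property \ref{props: iterated SPSRs}(iii)) together with the observation that both $x_1^j$ and $x_2^{\beta_2}\cdots x_n^{\beta_n}$ lie in $\mathfrak{m}^j$ and $\mathfrak{m}^{|\beta|}$ respectively, so only finitely many terms $r_\alpha \bm{x}^\alpha$ lie outside any given $\mathfrak{m}^N$. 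The hardest step, if any, is verifying this rearrangement carefully; but given the framework of \S \ref{subsec: topology}, it amounts to a direct Cauchy-sequence check.
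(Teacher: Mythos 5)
Your proof is correct and takes essentially the same route as the paper, whose entire proof reads ``Combine Property \ref{props: iterated SPSRs}(i) and (in case \textbf{(B)}) Lemma \ref{lem: standard form in A}''; you have simply spelled out what that combination means. One small remark: the observation that $x_1$ is central in $R$, while true (and a useful sanity check given the $A$-linearity hypothesis), is not actually needed for the step you invoke it for. Writing $x_1^{\alpha_1} x_2^{\alpha_2} \cdots x_n^{\alpha_n} = \bigl(x_1^{\alpha_1}\bigr) \cdot \bigl(x_2^{\alpha_2} \cdots x_n^{\alpha_n}\bigr)$ with the first factor viewed as an element of $A$ is just associativity and grouping, since $x_1$ already sits to the left in the standard monomial; centrality would only be required if you needed to move $x_1$ past some $x_i$ with $i \geq 2$, which you never do. The rest of the argument (existence by substituting the standard form of each $a_\beta$, convergence by running it backwards using completeness of $A$ and continuity of multiplication, uniqueness by chaining the two uniqueness clauses, and the Cauchy check justifying the rearrangement of the double sum) is fine.
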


\begin{proof}
Combine Property \ref{props: iterated SPSRs}(i) and (in case \textbf{(B)}) Lemma \ref{lem: standard form in A}.
\end{proof}

\begin{defn}\label{defn: standard form in R}
As in Definition \ref{defn: standard form in A}, we will say that the expression $r = \sum_{\alpha\in\mathbb{N}^n} r_\alpha \bm{x}^\alpha\in R$ is in \emph{standard form} (with respect to our choices of $\iota$ and, in case \textbf{(B)}, of uniformiser $x_1$) if the coefficients $r_\alpha$ lie in $C$.
\end{defn}

Suppose that we are in the situation of Proposition \ref{propn: uniqueness of representation}: let $C$ be an arbitrary section of the map $A\to k$, and suppose $r\in R$ has been written uniquely as $r = \sum_\alpha r_\alpha \bm{x}^\alpha$ for some $r_\alpha\in C$. We can immediately interpret Property \ref{props: iterated SPSRs}(ii) as follows:

\begin{lem}\label{lem: standard form of powers of m}
$r\in \mathfrak{m}^\ell$ if and only if $r_\alpha = 0$ for all $|\alpha| < \ell$.\qed
\end{lem}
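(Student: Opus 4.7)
The plan is to prove each implication separately. For the $(\Leftarrow)$ direction, suppose $r_\alpha = 0$ whenever $|\alpha| < \ell$, so that $r = \sum_{|\alpha|\geq\ell} r_\alpha \bm{x}^\alpha$ as a convergent series. Property \ref{props: iterated SPSRs}(ii) implies that each $x_i$ lies in $\mathfrak{m}$ (noting that, in case \textbf{(B)}, this includes the uniformiser $x_1$), so $\bm{x}^\alpha \in \mathfrak{m}^{|\alpha|} \subseteq \mathfrak{m}^\ell$ for every $\alpha$ in the support. Each partial sum is therefore an element of the ideal $\mathfrak{m}^\ell$, which is closed in the $\mathfrak{m}$-adic topology, so the limit $r$ lies in $\mathfrak{m}^\ell$.

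For $(\Rightarrow)$, I would induct on $\ell$; the case $\ell = 0$ is vacuous. Given $r \in \mathfrak{m}^\ell$ with $\ell \geq 1$, apply the inductive hypothesis to $r \in \mathfrak{m}^{\ell-1}$ to conclude $r_\alpha = 0$ whenever $|\alpha| < \ell-1$. Split $r = s + t$ with $s = \sum_{|\alpha| = \ell-1} r_\alpha \bm{x}^\alpha$ a finite sum and $t = \sum_{|\alpha| \geq \ell} r_\alpha \bm{x}^\alpha$. The $(\Leftarrow)$ direction applied to $t$ gives $t \in \mathfrak{m}^\ell$, hence $s = r - t \in \mathfrak{m}^\ell$. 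It then remains to show $s = 0$, i.e.\ that the standard monomials of total degree $\ell-1$ are $\iota(k)$-linearly independent modulo $\mathfrak{m}^\ell$.

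I would establish this linear independence by a secondary induction on $n$, using the skew power series decomposition $R = R_{n-1}[[x_n; \sigma_n, \delta_n]]$. Writing $s = \sum_{j=0}^{\ell-1} s_j x_n^j$ with $s_j \in R_{n-1}$ supported in standard monomials of total degree $\ell-1-j$, the main obstacle is the single-variable statement that $\sum_j r_j x_n^j \in \mathfrak{m}^\ell$ if and only if $r_j \in \mathfrak{m}_{n-1}^{\ell-j}$ for every $j$; this is where the locality hypotheses $\sigma_n(\mathfrak{m}_{n-1}^i) = \mathfrak{m}_{n-1}^i$ and $\delta_n(\mathfrak{m}_{n-1}^i) \subseteq \mathfrak{m}_{n-1}^{i+1}$ are used, to control the noncommutativity when expanding products drawn from $\mathfrak{m}_{n-1}R + x_n R$ (compare \cite[Lemma 1.5]{woods-SPS-dim}). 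Granted this, $s \in \mathfrak{m}^\ell$ forces $s_j \in \mathfrak{m}_{n-1}^{\ell-j}$ for each $j$, and since $\ell - 1 - j < \ell - j$ the outer inductive hypothesis applied inside $R_{n-1}$ gives $s_j = 0$ for every $j$, hence $s = 0$. The base cases of the $n$-induction are $R_0 = k$ (trivial) in case \textbf{(A)} and $R_1 = A$, where the claim becomes Properties \ref{props: standard form}(i), in case \textbf{(B)}.
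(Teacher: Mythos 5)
Your proof is correct, but note that the paper itself gives no proof: the lemma is marked \verb|\qed| and presented as an ``immediate interpretation'' of Properties~\ref{props: iterated SPSRs}(ii), deferring the substance to the cited literature on the $\mathfrak{m}$-adic filtration of local skew power series rings (\cite[\S 1]{SchVen06}, \cite[\S 2]{venjakob}, \cite[Lemma 1.5]{woods-SPS-dim}). So there is no in-paper argument to compare against; what you have done is reconstruct the argument the paper relies on implicitly.

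Your $(\Leftarrow)$ direction is clean and standard: each $\bm{x}^\alpha$ with $|\alpha|\geq\ell$ lies in $\mathfrak{m}^\ell$, and $\mathfrak{m}^\ell$ is closed (since $\mathfrak{m}^\ell+\mathfrak{m}^m=\mathfrak{m}^\ell$ for $m\geq\ell$), so the limit stays inside. For $(\Rightarrow)$, your nested induction on $\ell$ and $n$ is the right organizing principle, and you have put your finger on exactly the right crux: the single-variable identity
\[
\mathfrak{m}^\ell \;=\; \Bigl\{\, \textstyle\sum_{j\geq 0} r_j x_n^j \;:\; r_j\in\mathfrak{m}_{n-1}^{\ell-j} \,\Bigr\}
\]
in $R = R_{n-1}[[x_n;\sigma_n,\delta_n]]$. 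The containment $\supseteq$ is easy; the containment $\subseteq$ is where the locality hypotheses $\sigma_n(\mathfrak{m}_{n-1}^i)=\mathfrak{m}_{n-1}^i$ and $\delta_n(\mathfrak{m}_{n-1}^i)\subseteq\mathfrak{m}_{n-1}^{i+1}$ are used, via the computation $x_n^i\,\mathfrak{m}_{n-1}^t\subseteq\sum_{m=0}^i \mathfrak{m}_{n-1}^{t+m}x_n^{i-m}$, which shows that the right-hand set is a two-sided ideal closed under multiplication in the expected way. This identity is precisely what the references establish, so citing them as you do is fair; a self-contained proof of the lemma would want to spell out that one paragraph of filtration bookkeeping. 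With that caveat, your descent to the base cases $R_0=k$ (case \textbf{(A)}) and $R_1=A$ via Properties~\ref{props: standard form}(i) (case \textbf{(B)}) closes the induction correctly.
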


\textbf{Notation and conventions.}

\begin{enumerate}
\item If the pair $A\subseteq R$ satisfies \textbf{(A)} or \textbf{(B)} above, we will denote this by writing $R = A[[\bm{x}; \bm{\sigma}, \bm{\delta}]]^n$. If the pair $A\subseteq R$ satisfies the more stringent conditions \textbf{(A${}_\triangle$)} or \textbf{(B${}_\triangle$)}, we will write $R = A[[\bm{x}; \bm{\sigma}, \bm{\delta}]]^n_\triangle$ instead.
\item If $R = A[[\bm{x}; \bm{\sigma}, \bm{\delta}]]^n$, we will continue to use the notation $x_1, \dots, x_n$ for the ordered list of formal variables and to write $(R_i, \mathfrak{m}_i)$ for the local subrings and $k$ for the residue field.
\item From now on, we will write $\iota: k \to A$ for the unique \emph{multiplicative} section of the natural quotient map $A\to k$. (This exists by \cite[Chapter II, \S 4, Proposition 8]{Ser79}, as we are assuming throughout that $k$ is a finite field, and hence a perfect field of positive characteristic.) In cases \textbf{(A)} and \textbf{(A${}_\triangle$)}, where $A = k$ is a field, then $\iota$ is just the identity map; in cases \textbf{(B)} and \textbf{(B${}_\triangle$)}, it is not necessarily an additive map.
\end{enumerate}

\begin{rk}\label{rk: choice of section}
We will always take $C = \iota(k)$, for this multiplicative $\iota$, throughout the remainder of this paper. This choice of $C = \iota(k)$ may not be necessary, but is a significant simplification to our calculations from \S \ref{subsec: support calculations} onwards. Note, in particular, that $-1_A = \iota(-1_k) \in\iota(k)$, and so $a\in \iota(k)$ if and only if $-a\in\iota(k)$.
\end{rk}

We now consider sequences in $R$, and rephrase a result of \S \ref{subsec: topology} in the language of monomials. In the following proposition, we will take $\prec \; = \; \prec_{\mathrm{deglex}}$.

\subsection{Support under addition and subtraction}\label{subsec: support calculations}

Let $R = A[[\bm{x}; \bm{\sigma}, \bm{\delta}]]^n$. In this section, we present technical calculations to unify cases \textbf{(A)} and \textbf{(B)} so that we can handle both cases together.

In case \textbf{(A)}, addition and subtraction is termwise: that is, if $r = \sum_\alpha r_\alpha \bm{x}^\alpha$ and $s = \sum_\alpha s_\alpha \bm{x}^\alpha$ for $r_\alpha, s_\alpha\in k$, then $r \pm s = \sum_{\alpha} (r_\alpha \pm s_\alpha) \bm{x}^\alpha$, and $r_\alpha \pm s_\alpha \in k$. However, recall that our standard form of elements in case \textbf{(B)} (Definition \ref{defn: standard form in R}) relies on the choice of coefficient set $\iota(k)$, and this will in general be badly-behaved with respect to addition and subtraction.

\begin{ex}\label{ex: support is badly behaved under addition}
Let $R = \mathbb{Z}_3$, so that $\iota(\mathbb{F}_3) = \{0, -1, 1\}$. Fix the uniformiser $\pi = 3$. Then $1 + 1 = 2$ in this ring. However, $1 = 1\pi^0$ is already in standard form, but $2$ has standard form $\pi - 1 = 1\pi^1 + (-1)\pi^0$, and so $\supp(1) = \{0\}$ but $\supp(2) = \{0, 1\}$.
\end{ex}

For this reason we need several technical results that ensure that the necessary calculations are tractable. Many of these are far easier to prove in case \textbf{(A)}, so we omit their proofs in this case, and focus only on case \textbf{(B)}.

For the remainder of this section, unless stated otherwise, assume we are in case \textbf{(B)}. Recall that $x_1$ is the uniformiser of $A$, and write $v$ for the normalised $x_1$-adic valuation function on $A$ as in \S \ref{subsec: CDVRs}.

\begin{lem}\label{lem: support of sum of monomials}
Let $a, b\in A$ and $\gamma\in \mathbb{N}^n$ be arbitrary. Then $(a \pm b)\bm{x}^\gamma$ has support contained in $\gamma + (\mathbb{N}\times \{0\}^{n-1})$.
\end{lem}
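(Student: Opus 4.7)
The plan is to reduce the statement to the $A$-level standard form of Lemma \ref{lem: standard form in A} and then read off the support via the uniqueness of standard form in $R$ (Proposition \ref{propn: uniqueness of representation}).

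First I would apply Lemma \ref{lem: standard form in A} to the element $a \pm b \in A$ to write it uniquely in standard form with respect to the uniformiser $x_1$:
\[
a \pm b \;=\; \sum_{j=0}^{\infty} c_j\, x_1^j, \qquad c_j \in \iota(k).
\]
Here I am tacitly using the observation recorded in Remark \ref{rk: choice of section} that $-1 \in \iota(k)$, so that both the sum and difference produce an element of $A$ which Lemma \ref{lem: standard form in A} can be applied to.

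Next I would multiply this expansion on the right by $\bm{x}^\gamma = x_1^{\gamma_1} x_2^{\gamma_2} \cdots x_n^{\gamma_n}$. Since the $c_j$ are elements of the commutative ring $A$ and hence commute with $x_1$, and since multiplication by $\bm{x}^\gamma$ is continuous in the $\mathfrak{m}$-adic topology (§\ref{subsec: topology}), the convergent series in $A$ produces a convergent series in $R$:
\[
(a \pm b)\,\bm{x}^\gamma \;=\; \sum_{j=0}^{\infty} c_j\, x_1^{j+\gamma_1}\, x_2^{\gamma_2} \cdots x_n^{\gamma_n} \;=\; \sum_{j=0}^{\infty} c_j\, \bm{x}^{\gamma + j e_1},
\]
where $e_1 = (1,0,\dots,0)$. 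Every monomial on the right is a \emph{standard} monomial $\bm{x}^{\beta}$ with $\beta \in \gamma + (\mathbb{N} \times \{0\}^{n-1})$, and every coefficient lies in $\iota(k)$, so the right-hand side is already displayed in standard form in the sense of Definition \ref{defn: standard form in R}.

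Finally, I would invoke the uniqueness half of Proposition \ref{propn: uniqueness of representation}: the standard-form expansion of $(a \pm b) \bm{x}^\gamma$ is unique, so its support is exactly $\{\gamma + j e_1 : c_j \neq 0\}$, which is contained in $\gamma + (\mathbb{N} \times \{0\}^{n-1})$ as claimed. The only mildly delicate step is justifying the interchange of infinite sum and right-multiplication by $\bm{x}^\gamma$, but this is immediate from continuity of ring multiplication on $R$ and the fact that the tail $\sum_{j \geq N} c_j x_1^j$ lies in $\mathfrak{m}_1^N \subseteq \mathfrak{m}^N$, so multiplying by $\bm{x}^\gamma$ keeps tails inside $\mathfrak{m}^{N+|\gamma|}$.
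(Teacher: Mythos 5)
Your argument is correct and is essentially the paper's own proof: both write $a \pm b$ in $A$-standard form as $\sum_j c_j x_1^j$, multiply through by $\bm{x}^\gamma$, observe that $c_j x_1^j \bm{x}^\gamma = c_j \bm{x}^{\gamma + j\bm{e}_1}$, and read off the support from the resulting standard form. You spell out the continuity and uniqueness justifications somewhat more explicitly than the paper does, but the substance and the decomposition are identical.
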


\begin{proof}
Since $a \pm b\in A$, we may write it in standard form as $c_0 + c_1 x_1 + \dots$, where the coefficients $c_i\in \iota(k)$. Hence
$$(a \pm b)\bm{x}^\gamma = \sum_{i\geq 0} c_i x_1^i \bm{x}^{\gamma}.$$
But $x_1^i \bm{x}^\gamma = \bm{x}^{\gamma + i\bm{e}_1}$, where $\bm{e}_1 = (1,0,\dots,0)\in \mathbb{N}^n$. In particular, we can calculate that $\supp((a \pm b)\bm{x}^\gamma) = \{\gamma + i\bm{e}_1 : c_i \neq 0\}$.
\end{proof}

The following are essentially inductive versions of Lemma \ref{lem: support of sum of monomials}, so we omit their proofs. Note that the assumption ``$v(a_i)\to \infty$" is necessary for the sum in part (i) to converge.

\begin{lem}\label{lem: support of sum}
$ $

\begin{enumerate}[label=(\roman*)]
\item Let $a_0, a_1, \dots \in A$ and $\gamma\in\mathbb{N}^n$. Suppose that $v(a_i) \to \infty$ as $i\to \infty$. Then we have $\supp(\sum_{i=0}^\infty a_i \bm{x}^\gamma)\subseteq \gamma + (\mathbb{N}\times \{0\}^{n-1})$.
\item Let $r,s\in R$. Then $\supp(r\pm s) \subseteq (\supp(r) \cup \supp(s)) + (\mathbb{N}\times \{0\}^{n-1})$.
 \qed
\end{enumerate}
\end{lem}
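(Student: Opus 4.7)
The plan is to reduce both parts to Lemma \ref{lem: support of sum of monomials}, exploiting the fact that in case \textbf{(B)} the uniformiser $x_1$ lies in $A$ and every $(\sigma_i,\delta_i)$ is $A$-linear, so $A$ is central in $R$. Case \textbf{(A)} is immediate, because $\iota(k)=k$ is additively closed and so $\supp(r\pm s)\subseteq \supp(r)\cup \supp(s)\subseteq (\supp(r)\cup \supp(s)) + (\mathbb{N}\times\{0\}^{n-1})$; henceforth I work in case \textbf{(B)}.

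For part (i), the hypothesis $v(a_i)\to\infty$ ensures that $a := \sum_{i=0}^\infty a_i$ converges in $A$. Centrality of $A$ together with continuity of multiplication in $R$ gives $\sum_{i=0}^\infty a_i\bm{x}^\gamma = a\bm{x}^\gamma$. Applying the argument used to prove Lemma \ref{lem: support of sum of monomials} directly to $a\bm{x}^\gamma$ — that is, expanding $a$ in standard form via Lemma \ref{lem: standard form in A} and noting $x_1^i\bm{x}^\gamma = \bm{x}^{\gamma+i\bm{e}_1}$ — yields the claim.

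For part (ii), I would slice by the last $n-1$ coordinates of $\mathbb{N}^n$. For each $\gamma'\in \mathbb{N}^{n-1}$, set $r^{(\gamma')}:=\sum_{j\geq 0} r_{(j,\gamma')}x_1^j \in A$, which converges because $v(r_{(j,\gamma')}x_1^j)\geq j$; define $s^{(\gamma')}$ similarly. Centrality of $x_1$ lets me regroup and write $r = \sum_{\gamma'} r^{(\gamma')}\bm{x}^{(0,\gamma')}$ and $s = \sum_{\gamma'} s^{(\gamma')}\bm{x}^{(0,\gamma')}$, so $r\pm s = \sum_{\gamma'}(r^{(\gamma')}\pm s^{(\gamma')})\bm{x}^{(0,\gamma')}$. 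Now expand each $r^{(\gamma')}\pm s^{(\gamma')}\in A$ in standard form and reassemble; by uniqueness in Proposition \ref{propn: uniqueness of representation} this yields the standard form of $r\pm s$. On the slice $\gamma'$, the support of $r\pm s$ then lies in $\{(j,\gamma'):j\geq v(r^{(\gamma')}\pm s^{(\gamma')})\}$, and the ultrametric inequality $v(r^{(\gamma')}\pm s^{(\gamma')})\geq \min\{v(r^{(\gamma')}),v(s^{(\gamma')})\}$, together with the identity $v(r^{(\gamma')}) = \min\{\alpha_1 : (\alpha_1,\gamma')\in\supp(r)\}$ (and analogously for $s$), forces any $(j,\gamma')\in\supp(r\pm s)$ to satisfy $j\geq \alpha_1$ for some $(\alpha_1,\gamma')\in\supp(r)\cup\supp(s)$ — the desired containment.

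The main technical obstacle is justifying the rearrangement of the double sum in part (ii). This reduces to unconditional summability of the family $\{r_{(j,\gamma')}\bm{x}^{(j,\gamma')}\}$ in the ultrametric topology on $R$, which follows from Lemma \ref{lem: standard form of powers of m}: each such term lies in $\mathfrak{m}^{j+|\gamma'|}$, and only finitely many pairs $(j,\gamma')$ satisfy $j+|\gamma'|<N$ for any fixed $N$, so the regrouping by slices is legitimate.
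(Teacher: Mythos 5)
The paper omits the proof, remarking only that parts (i) and (ii) are ``essentially inductive versions of Lemma \ref{lem: support of sum of monomials},'' so there is no official argument to compare against. Your proposal is a correct and complete realisation of that hint, and the slicing by $\gamma'\in\mathbb{N}^{n-1}$ is the natural way to make the induction precise.

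The one genuinely delicate point --- that the double sum $\sum_{(j,\gamma')} r_{(j,\gamma')}\bm{x}^{(j,\gamma')}$ may be regrouped along slices --- you justify correctly via Lemma \ref{lem: standard form of powers of m}, observing that only finitely many terms survive modulo any fixed $\mathfrak{m}^N$, so the family is unconditionally summable. Two minor observations: in part (i), continuity of right multiplication by $\bm{x}^\gamma$ alone gives $\sum a_i\bm{x}^\gamma=a\bm{x}^\gamma$, and centrality of $A$ is not actually needed there (it is, however, essential in part (ii) to pull the $x_1^j$ past the later variables, exactly as you use it). Also, the identification $v(r^{(\gamma')})=\min\{\alpha_1:(\alpha_1,\gamma')\in\supp(r)\}$ is Properties \ref{props: standard form}(i) applied slice-wise, relying on the fact that nonzero elements of $\iota(k)$ are units; this is used implicitly but correctly. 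The edge cases ($r^{(\gamma')}=0$, etc.) all work out, and the final containment argument is sound.
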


Note that in case \textbf{(A)}, we have $\supp(r+ s) \subseteq \supp(r) \cup \supp(s)$, but due to Example \ref{ex: support is badly behaved under addition} we cannot conclude this in case \textbf{(B)}. However, the above lemma is strong enough to conclude the following:

\begin{propn}\label{propn: LM is filtered}
Fix any additive total ordering $\preceq$, and let $r,s\in R$. Then $\LM = \LM_\preceq$ satisfies the following properties:
\begin{enumerate}[label=(\roman*)]
\item $\LM (r\pm s) \succeq \min\{\LM(r), \LM(s)\}$,
\item if $\LM(r) \neq \LM(s)$, then $\LM (r\pm s) = \min\{\LM(r), \LM(s)\}$.
\end{enumerate}
\end{propn}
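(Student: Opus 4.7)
The plan is to handle the two cases of Setup \ref{setup} separately. In case \textbf{(A)}, addition and subtraction of elements in standard form is simply termwise, so $\supp(r \pm s) \subseteq \supp(r) \cup \supp(s)$, and both (i) and (ii) follow at once: for (ii), when $\LM(r) \neq \LM(s)$ the coefficient of $\bm{x}^{\min\{\LM(r),\LM(s)\}}$ in $r \pm s$ is just $\pm$ the corresponding coefficient in the element achieving the minimum, which is nonzero. The interesting case is \textbf{(B)}.

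For part (i) in case \textbf{(B)}, I would apply Lemma \ref{lem: support of sum}(ii), which states that every $\beta \in \supp(r \pm s)$ has the form $\beta = \gamma + i\bm{e}_1$ with $\gamma \in \supp(r) \cup \supp(s)$ and $i \geq 0$. Since $\preceq$ is additive, Remark \ref{rks: well-ordering and refines div}(ii) gives $\gamma \preceq \gamma + i\bm{e}_1 = \beta$, and by definition of $\LM$ we have $\min\{\LM(r), \LM(s)\} \preceq \gamma$. Taking the minimum over all $\beta$ in the support yields $\min\{\LM(r), \LM(s)\} \preceq \LM(r \pm s)$.

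For part (ii) in case \textbf{(B)}, assume without loss of generality that $\mu := \LM(r) \prec \LM(s)$. I would split off the least term of $r$ in standard form: write $r = r_\mu \bm{x}^\mu + r'$ with $r_\mu \in \iota(k) \setminus \{0\}$ and $r' = \sum_{\alpha \succ \mu} r_\alpha \bm{x}^\alpha$, so that $\LM(r') \succ \mu$ (with the convention $\LM(0) = \infty$ if $r'$ vanishes). Setting $u := r' \pm s$, part (i) applied to $r'$ and $s$ -- both having $\LM$ strictly exceeding $\mu$ -- gives $\LM(u) \succ \mu$. Writing $u$ in standard form as $u = \sum_{\beta \succ \mu} u_\beta \bm{x}^\beta$, we obtain
\[
r \pm s = r_\mu \bm{x}^\mu + \sum_{\beta \succ \mu} u_\beta \bm{x}^\beta,
\]
an expression whose coefficients all lie in $\iota(k)$; hence this is the unique standard form of $r \pm s$ by Proposition \ref{propn: uniqueness of representation}. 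Since $r_\mu \neq 0$, we conclude $\mu \in \supp(r \pm s)$, so $\LM(r \pm s) \preceq \mu$, and combined with part (i) this gives equality.

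The only genuinely delicate point is the final uniqueness argument, which sidesteps the pitfall illustrated by Example \ref{ex: support is badly behaved under addition}: the trick is to pull off the lone $\bm{x}^\mu$ term from $r$ \emph{before} performing the addition with $s$, so that the remaining sum $u = r' \pm s$ is treated as a single element whose standard form we appeal to abstractly, rather than naively adding coefficient-by-coefficient. With this reorganisation, no carries in the $x_1$-adic direction can ever reach the monomial $\bm{x}^\mu$, and everything reduces to Lemma \ref{lem: support of sum} and the uniqueness clause of Proposition \ref{propn: uniqueness of representation}.
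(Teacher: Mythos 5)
Your argument is correct, and part~(i) coincides with the paper's own proof. For part~(ii) you take a genuinely different route. The paper's proof is a short order-theoretic trick: after the reduction $\LM(r) \prec \LM(s)$, it rewrites $r = (r \pm s) \mp s$ and applies part~(i) to this identity, deducing $\LM(r) \succeq \LM(r \pm s)$ and combining with (i) again to get equality, never touching the standard form. You instead argue constructively: you peel off the least term $r_\mu \bm{x}^\mu$ of $r$, show via part~(i) that $u = r' \pm s$ has $\LM(u) \succ \mu$, and then observe that $r_\mu \bm{x}^\mu + u$, having all coefficients in $\iota(k)$, must be \emph{the} standard form of $r \pm s$ by the uniqueness clause of Proposition~\ref{propn: uniqueness of representation} -- so $\mu \in \supp(r \pm s)$ and $\LM(r \pm s) = \mu$. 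Both are sound. The paper's version is shorter and more abstract; it would survive in any setting where the conclusion of part~(i) holds, with no need to revisit how elements are represented. Your version is slightly longer but more informative: it actually identifies $\LC(r \pm s) = \LC(r)$ (in the case $\LM(r) \prec \LM(s)$), i.e.\ it proves an $R$-level analogue of Properties~\ref{props: standard form}(iv), which the paper's argument does not track. The key delicate point you flag -- pulling the lone $\bm{x}^\mu$ term off \emph{before} adding so that no $x_1$-adic carry can land on $\bm{x}^\mu$ -- is exactly right, and is the honest content hiding behind the paper's cleverer algebraic rearrangement.
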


\begin{proof}
$ $

\begin{enumerate}[label=(\roman*)]
\item This follows from Remark \ref{rks: well-ordering and refines div}(ii) and Lemma \ref{lem: support of sum}(ii), noting that $\alpha + (\mathbb{N}\times \{0\}^{n-1})$ has unique $\leq_\div$-minimal element $\alpha$.
\item Assume without loss of generality that $\LM(r) \prec \LM(s)$. Then note that $r = (r\pm s)\mp s$, and apply part (i) twice to see that $$\LM(r) = \LM((r\pm s)\mp s) \succeq \min\{\LM(r\pm s), \LM(s)\} \succeq \min\{\LM(r), \LM(s)\}.$$ Putting all this information together yields $\LM(r) \succeq \LM(r\pm s) \succeq \LM(r)$.\qedhere
\end{enumerate}
\end{proof}

To determine whether two elements $r,s\in R$ are equal, we may wish to ask whether $\supp(r-s)$ is empty. The following lemma implies that we do not need to check all of $(\supp(r) \cup \supp(s)) + (\mathbb{N}\times \{0\}^{n-1})$, and need only check $\supp(r) \cup \supp(s)$, even in case \textbf{(B)}:

\begin{lem}\label{lem: checking equality on support}
$r = s$ if and only if $\supp(r-s) \cap (\supp(r) \cup \supp(s)) = \varnothing$.
\end{lem}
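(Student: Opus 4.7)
The forward direction is immediate: $r = s$ makes $r - s = 0$, so $\supp(r-s) = \varnothing$ and the intersection is vacuous. For the converse I plan to argue the contrapositive, assuming $r \neq s$ and exhibiting an exponent $\gamma_0$ that lies simultaneously in $\supp(r - s)$ and in $T := \supp(r)\cup\supp(s)$.

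My strategy is to fix any additive total order $\preceq$ on $\mathbb{N}^n$ (for concreteness $\preceq_{\mathrm{lex}}$; by Remark \ref{rks: well-ordering and refines div}(i) it is a well-ordering), write $r = \sum_\alpha r_\alpha \bm{x}^\alpha$ and $s = \sum_\alpha s_\alpha \bm{x}^\alpha$ in standard form with coefficients in $\iota(k)$, and let $\gamma_0$ be the $\preceq$-minimum of $\{\alpha \in \mathbb{N}^n : r_\alpha \neq s_\alpha\}$. By uniqueness of standard form (Proposition \ref{propn: uniqueness of representation}) this set is nonempty, and the minimum exists because $\preceq$ is a well-ordering. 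Since $r_{\gamma_0}\neq s_{\gamma_0}$ forces at least one of them to be nonzero, the inclusion $\gamma_0\in T$ is automatic, and the whole task reduces to showing $\gamma_0\in\supp(r-s)$.

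For this, note that by minimality the terms indexed by $\alpha\prec\gamma_0$ cancel exactly, so $r - s = c\bm{x}^{\gamma_0} + E$ with $c := r_{\gamma_0}-s_{\gamma_0}\in A$ and $E := \sum_{\alpha\succ\gamma_0}(r_\alpha-s_\alpha)\bm{x}^\alpha$. The key arithmetic observation is that, because $\iota$ is an injective section of $A\to k$, the difference of two distinct elements of $\iota(k)$ has nonzero image in the residue field and therefore lies in $A^\times$. In case \textbf{(B)} this means the standard expansion $c = c_0 + c_1 x_1 + \dots$ has $c_0\neq 0$; in case \textbf{(A)} we trivially have $c = c_0 \in k^\times$. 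Either way, $c\bm{x}^{\gamma_0} = \sum_{j\geq 0}c_j\bm{x}^{\gamma_0 + j\bm{e}_1}$ is already in standard form and satisfies $\LM(c\bm{x}^{\gamma_0}) = \gamma_0$. Meanwhile, Lemma \ref{lem: support of sum}(ii) together with the additivity of $\preceq$ gives $\supp(E)\subseteq\{\beta : \beta\succ\gamma_0\}$, so $\LM(E)\succ\gamma_0$. Since the two least monomials differ, Proposition \ref{propn: LM is filtered}(ii) forces $\LM(r-s) = \gamma_0$, so $\gamma_0\in\supp(r-s)$, completing the contrapositive.

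The main subtlety to manage is case \textbf{(B)}, where $\iota(k)$ is not closed under addition (Example \ref{ex: support is badly behaved under addition}), so the standard form of $r-s$ is \emph{not} produced by a termwise subtraction and the $x_1$-carries introduced when renormalising $E$ could in principle perturb coefficients. Anchoring the entire analysis at the $\preceq$-leftmost disagreement between $r$ and $s$, where $c\in A^\times$ yields a genuine nonzero $\iota(k)$-contribution at $\gamma_0$ and no normalisation carry can reach back that far (carries only ever push forward in the $\bm{e}_1$-direction), is what sidesteps this issue.
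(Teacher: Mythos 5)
Your proof is correct and follows essentially the same strategy as the paper: both fix an additive total order $\preceq$, identify the $\preceq$-minimal exponent at which $r$ and $s$ disagree, and use the fact that distinct elements of $\iota(k)$ have nonzero difference mod $J(A)$ (hence a unit in $A$) to conclude that this exponent is the least monomial of $r-s$. Your writeup spells out the carry-propagation issue (via Lemma~\ref{lem: support of sum}(ii) and Proposition~\ref{propn: LM is filtered}(ii)) in more detail than the paper's rather terse final line, but the underlying idea is identical.
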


\begin{proof}
The forward implication is clear, because if $r = s$ then $\supp(r-s) = \supp(0) = \varnothing$. For the reverse implication, suppose $r\neq s$, and write $r = \sum_\alpha r_\alpha \bm{x}^\alpha$ and $s = \sum_\alpha s_\alpha \bm{x}^\alpha$, where $r_\alpha, s_\alpha\in\iota(k)$. Then $r-s = \sum_\alpha (r_\alpha - s_\alpha) \bm{x}^\alpha \neq 0$.

Fix any additive total ordering $\preceq$ on $\mathbb{N}^n$, and let $\beta$ be the $\preceq$-minimal exponent such that $r_\beta \neq s_\beta$. This implies in particular that $r_\beta \not\equiv s_\beta \bmod J(A)$, as the elements of $\iota(k)$ are distinct mod $J(A)$. It follows that $r_\beta - s_\beta \not\equiv 0 \mod J(A)$. Hence $\LM_\preceq(r-s) = \beta\in \supp(r) \cup \supp(s)$.
\end{proof}

\subsection{Support under the action of the skew derivation}

Following Proposition \ref{propn: uniqueness of representation}, suppose that the element $r\in R$ has unique representation
\begin{align*}
r = \sum_{\alpha\in \mathbb{N}^n} r_\alpha \bm{x}^\alpha
\end{align*}
with coefficients $r_\alpha\in \iota(k)$.

\begin{propn}\label{propn: sigma and delta of x_r}
Suppose that we are in either case \textbf{(A${}_\triangle$)} or case \textbf{(B${}_\triangle$)}. Let $1\leq r < s\leq n$, and write $\sigma = \sigma_s$, $\delta = \delta_s$. Then there exist constants $q\in \iota(k)^\times$ and $b_t, c_\gamma, c'_\gamma \in \iota(k)$ such that
$$\sigma(x_r) = qx_r + \sum_{t=1}^{r-1} b_t x_t + \sum_\gamma c_\gamma \bm{x}^\gamma, \quad \qquad \delta(x_r) = \sum_\gamma c'_\gamma \bm{x}^\gamma,$$
where in both sums $\gamma$ runs over all elements of $\mathbb{N}^r\times \{0\}^{n-r}$ of total degree at least 2.
\end{propn}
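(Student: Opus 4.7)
The plan is to extract three separate features of the formulas — supports confined to $\mathbb{N}^r \times \{0\}^{n-r}$, appropriate total-degree lower bounds on the support exponents, and the unit condition $q \in \iota(k)^\times$ — from triangularity, locality, and a linear-algebra argument on $\mathfrak{m}_r / \mathfrak{m}_r^2$ respectively.

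For the first two features, I would begin by invoking triangularity of $(\sigma_s, \delta_s)$ with respect to the sequence $R_0 \subseteq \dots \subseteq R_{s-1}$, which forces $\sigma(x_r), \delta(x_r) \in R_r$; the uniqueness statement of Proposition \ref{propn: uniqueness of representation} (applied to both $R_r$ and $R$) then places their standard-form supports inside $\mathbb{N}^r \times \{0\}^{n-r}$, with coefficients in $\iota(k)$. For the degree bounds, since $x_r \in \mathfrak{m}_r \subseteq \mathfrak{m}_{s-1}$ and $(\sigma, \delta)$ is local on $R_{s-1}$, locality gives $\sigma(x_r) \in \mathfrak{m}_{s-1} \subseteq \mathfrak{m}$ and $\delta(x_r) \in \mathfrak{m}_{s-1}^2 \subseteq \mathfrak{m}^2$, and Lemma \ref{lem: standard form of powers of m} translates these into lower bounds of $1$ and $2$ respectively on the total degrees of the support exponents. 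At this point the $\delta$-formula is complete, and the $\sigma$-formula is reduced to the claim $q \in \iota(k)^\times$ for the coefficient $q$ of $x_r$.

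The main obstacle will be establishing $q \in \iota(k)^\times$. Triangularity forces $\sigma$ to restrict to a local automorphism $\sigma|_{R_r}$ of $R_r$, which preserves $\mathfrak{m}_r$ and $\mathfrak{m}_r^2$ and descends to an automorphism $\overline{\sigma}$ of $V := \mathfrak{m}_r / \mathfrak{m}_r^2$; the $k$-linearity of $\overline{\sigma}$ follows from the fact that $\sigma$ induces the identity on the residue field $R_r/\mathfrak{m}_r = k$ (using $A$-linearity in case \textbf{(B)}). Since $V$ is a finite-dimensional $k$-vector space with basis the images of $x_1, \dots, x_r$, and triangularity further forces $\sigma(x_t) \in R_t$ and hence $\overline{\sigma}(x_t) \in \mathrm{span}_k(x_1, \dots, x_t)$ for every $t \le r$, the matrix of $\overline{\sigma}$ in this basis is upper triangular, and its invertibility forces every diagonal entry — in particular the image of $q$ in $k$ — to be nonzero. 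The delicate bookkeeping, especially in case \textbf{(B)}, lies in verifying that $\mathfrak{m}_r = \mathfrak{m}_{s-1} \cap R_r$ (so that $\sigma|_{R_r}$ is genuinely local) and that $(x_1, \dots, x_r)$ really is a $k$-basis of $V$ despite $x_1$ originating in $A$ rather than as a formal power-series variable; once these are in place, the argument assembles cleanly.
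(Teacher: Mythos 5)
Your proposal is correct and follows essentially the same route as the paper: triangularity to confine the support to $\mathbb{N}^r\times\{0\}^{n-r}$, locality plus Lemma~\ref{lem: standard form of powers of m} for the degree bounds, and an upper-triangular invertible action of $\sigma$ on $\mathfrak{m}_r/\mathfrak{m}_r^2$ for the unit $q$. The paper states the support/degree step more tersely by asserting $\sigma_s(x_r)\in\mathfrak{m}_r$ and $\delta_s(x_r)\in\mathfrak{m}_r^2$ directly (implicitly using $R_r\cap\mathfrak{m}_{s-1}^j=\mathfrak{m}_r^j$), and phrases the invertibility argument via the flag $V_1\subset\cdots\subset V_r$ and the quotient $V_r/V_{r-1}$ rather than a matrix in the basis $(x_1,\dots,x_r)$, but these are the same argument.
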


\begin{proof}
In both cases, the \emph{triangularity} condition tells us that, for all $1 \leq r < s \leq n$, we have $x_s x_r = \sigma_s(x_r)x_s + \delta_s(x_r)$ where $\sigma_s(x_r) \in \mathfrak{m}_r$ and $\delta_s(x_r) \in \mathfrak{m}_r^2$. Almost everything claimed now follows immediately from the fact that 
$$\displaystyle \mathfrak{m}_r = \left\{ \sum_{\gamma\in\mathbb{N}^n} c_\gamma \bm{x}^\gamma : c_\gamma \neq 0 \implies \gamma\in \mathbb{N}^r\times \{0\}^{n-r} \text{ and } |\gamma| \geq 1 \right\},$$ 
$$\displaystyle \mathfrak{m}_r^2 = \left\{ \sum_{\gamma\in\mathbb{N}^n} c_\gamma \bm{x}^\gamma : c_\gamma \neq 0 \implies \gamma\in \mathbb{N}^r\times \{0\}^{n-r} \text{ and } |\gamma| \geq 2 \right\},$$
both of which are Lemma \ref{lem: standard form of powers of m} applied to $\mathfrak{m}_r \subseteq R_r$: it only remains to prove that $q$ is invertible.

But the triangularity assumption implies that $\sigma_s$ restricts to a ring automorphism of $R_j$ for each $1\leq j\leq r$, and so $\sigma_s(\mathfrak{m}_j) = \mathfrak{m}_j$ and $\sigma_s(\mathfrak{m}_j^2) = \mathfrak{m}_j^2$ for all $j$. In particular, writing $V_j$ for the $k$-vector space $\mathfrak{m}_j/\mathfrak{m}_j^2$, we see that $\sigma_s$ is a $k$-linear automorphism of $V_j$. Note that we have a flag of subspaces $V_1 \subset \dots \subset V_r$ on which $\sigma$ acts, that $\dim_k V_j = \dim_k V_{j-1} + 1$ for each $j$, and that $\sigma$ acts via an invertible upper-triangular matrix on $V_r$ with respect to this flag. In particular, the action of $\sigma$ on $V_r/V_{r-1}$ is invertible.
\end{proof}

\subsection{Direct calculation: a three-dimensional worked example}

In this subsection, we will analyse the prime ideal structure of $\mathbb{Z}_p[[x]][[y; \delta]]$, where $\delta(x) = p^2$. We begin with some calculations. Write $v_p$ for the $p$-adic valuation on $\mathbb{Z}_p$.

\begin{lem}\label{lem: p-adic valuation of permutations}
Take some $a, r\in\mathbb{N}$ with $a \geq p^r$. Then
$$v_p\left( \frac{a!}{(a-p^r)!} \right) \geq p^{r-1} + p^{r-2} + \dots + p + 1,$$
with equality if and only if
$$a\not\in \bigcup_{s\geq r+1} \{p^s, p^s+1, \dots, p^s+p^r-1\}.$$
\end{lem}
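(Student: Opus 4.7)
The plan is to compute $v_p\!\left(\tfrac{a!}{(a-p^r)!}\right)$ by recognising $\tfrac{a!}{(a-p^r)!} = a(a-1)\cdots(a-p^r+1)$ as a product of $p^r$ consecutive integers and then applying Legendre's formula. Concretely,
$$v_p\!\left(\frac{a!}{(a-p^r)!}\right) \;=\; \sum_{i\ge 1}\!\left(\left\lfloor\frac{a}{p^i}\right\rfloor - \left\lfloor\frac{a-p^r}{p^i}\right\rfloor\right) \;=\; \sum_{i\ge 1} N_i,$$
where $N_i$ counts the multiples of $p^i$ lying in the interval $\{a-p^r+1,\ldots,a\}$.

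I would then split the sum at $i=r$. For $i\le r$, any run of $p^r$ consecutive integers contains exactly $p^{r-i}$ multiples of $p^i$, so $\sum_{i=1}^{r} N_i = p^{r-1}+\cdots+p+1$, which is precisely the claimed lower bound. For $i\ge r+1$, the interval has length $p^r<p^i$, so $N_i\in\{0,1\}$; in particular $N_i\ge 0$, which yields the inequality.

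For the equality statement, all $N_i$ with $i\ge r+1$ must vanish. Since divisibility by $p^i$ for $i\ge r+1$ implies divisibility by $p^{r+1}$, this reduces to the single condition that no element of $\{a-p^r+1,\ldots,a\}$ is divisible by $p^{r+1}$. Equivalently (for example via Lucas' theorem applied to $\binom{a}{p^r}\equiv a_r\pmod p$, using the factorisation $\tfrac{a!}{(a-p^r)!}=p^r!\cdot\binom{a}{p^r}$ together with $v_p(p^r!)=p^{r-1}+\cdots+p+1$), equality holds iff the $r$th base-$p$ digit of $a$ is nonzero, i.e.\ iff $a \bmod p^{r+1} \ge p^r$. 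The final step is then to match this clean base-$p$ condition against the explicit union $\bigcup_{s\geq r+1}\{p^s,\dots,p^s+p^r-1\}$ by unpacking the digits above position $r$.

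I expect the main obstacle to be this last combinatorial translation between the ``$r$th digit is nonzero'' criterion and the union of intervals as displayed; the Legendre/Lucas content is otherwise standard.
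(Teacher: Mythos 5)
Your approach is, up to packaging, the same as the paper's: Legendre's formula applied to a product of $p^r$ consecutive integers is exactly the paper's direct count of how many of $a-p^r+1,\dots,a$ are divisible by each $p^i$, and both arguments yield the same criterion for equality, namely that no element of the interval is divisible by $p^{r+1}$, equivalently $a \bmod p^{r+1} \geq p^r$, equivalently the $r$th base-$p$ digit of $a$ is nonzero. (The paper phrases this as ``$s=r$''; your Lucas-theorem detour is a pleasant alternative route to the same fact, but not needed.)

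The step you flagged as the likely obstacle is indeed where the argument breaks --- but not through any fault of yours: your digit criterion is \emph{not} equivalent to the union displayed in the lemma. The set on which equality fails, as you derived it, is $\bigcup_{m\geq 1}\{mp^{r+1},\, mp^{r+1}+1,\dots,\, mp^{r+1}+p^r-1\}$, a union over all positive \emph{multiples} of $p^{r+1}$, whereas the lemma takes the union only over the pure \emph{powers} $p^s$ with $s\geq r+1$, a strictly smaller set. A concrete mismatch: with $p=2$, $r=1$, $a=12$ we get $v_2(12\cdot 11)=v_2(132)=2 > 1 = p^{r-1}+\cdots+1$, so equality fails, and indeed $12\in\{3\cdot 4,\, 3\cdot 4+1\}=\{12,13\}$; yet $12\notin\{4,5\}\cup\{8,9\}\cup\{16,17\}\cup\cdots$. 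So the equality clause of the lemma as printed is misstated, and the paper's own proof in fact stops at the ``$s=r$'' criterion and asserts the union form without deriving it. The corrected statement should have the union run over multiples $mp^{r+1}$, $m\geq 1$. Note that the only place the equality clause is invoked is Lemma \ref{lem: LM of derivatives}, under the hypothesis $v_p(b)=t$, which forces the $t$th base-$p$ digit of $b$ to be nonzero and hence equality to hold; so the downstream argument is unaffected by the misstatement.
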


\begin{proof}
We wish to find the $p$-adic valuation of the product of the numbers $a-p^r+1$, $a-p^r+2$, $\dots, a-1$, $a$. These form a sequence of $p^r$ consecutive integers, and so exactly $p^{r-s}$ of them must be divisible by $p^s$ for each $s = 1, 2, \dots, r-1, r$. Rephrasing this,
$$\begin{cases}
\text{exactly } p^{r-1} - p^{r-2} \text{ of them are divisible by } p \text{ but not } p^2,\\
\text{exactly } p^{r-2} - p^{r-3} \text{ of them are divisible by } p^2 \text{ but not } p^3,\\
\qquad \qquad \qquad \qquad \qquad \quad \qquad \vdots\\
\text{exactly } p - 1 \text{ of them are divisible by } p^{r-1} \text{ but not } p^r,\\
\text{exactly } 1 \text{ of them is divisible by } p^s \text{ for some } s \geq r.
\end{cases}$$
That is, taking $s$ to be the maximal value for which the above is true,
\begin{align*}
v_p\left( \frac{a!}{(a-p^r)!} \right) &= 1(p^{r-1}-p^{r-2}) + 2(p^{r-2}-p^{r-3}) + \dots + (r-1)(p-1) + s\\
&= p^{r-1} + p^{r-2} + \dots + p + (s - r + 1),
\end{align*}
and so the desired inequality holds, with equality exactly when $s = r$.
\end{proof}

Now consider $\mathbb{Z}_p[[x,y]]$ simply as a (topological) module, and write $\partial_x := \partial/\partial x$ and $\partial_y := \partial/\partial_y$ for the (formal, termwise) partial derivatives. In the following lemma, we may take $\preceq$ to be any additive total order on $\mathbb{N}^3$.

\begin{lem}\label{lem: LM of derivatives}
Suppose that $\LM(r) = \LM(p^a x^b y^c) = (a, b, c)$.

\begin{enumerate}[label=(\roman*)]
\item If $v_p(b) = t$, then $\LM(\partial_x^{p^t}(r)) = \LM(\partial_x^{p^t}(p^a x^b y^c)) = (a+v, b-p^t, c)$, where $v = p^{t-1} + \dots + p + 1$.
\item If $v_p(c) = t$, then $\LM(\partial_y^{p^t}(r)) = \LM(\partial_y^{p^t}(p^a x^b y^c)) = (a+v, b, c-p^t)$, where $v = p^{t-1} + \dots + p + 1$.
\end{enumerate}
\end{lem}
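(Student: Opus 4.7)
The plan is to track how $\partial_x^{p^t}$ acts on $r$ by regrouping the standard-form expansion $r = \sum_{(a', b', c') \in \supp(r)} r_{(a', b', c')} p^{a'} x^{b'} y^{c'}$ by $(x, y)$-exponent: setting $s_{(b', c')} := \sum_{a'} r_{(a', b', c')} p^{a'} \in \mathbb{Z}_p$, one has $r = \sum_{(b', c')} s_{(b', c')} x^{b'} y^{c'}$. Since $p$ is central and $\partial_x$ acts termwise on $\mathbb{Z}_p[[x, y]]$, this gives $\partial_x^{p^t}(r) = \sum_{b' \geq p^t,\, c'} s_{(b', c')} \frac{b'!}{(b'-p^t)!} x^{b'-p^t} y^{c'}$. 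Distinct $(b', c')$ contribute to distinct $(x, y)$-monomials on the right, so after expanding each $\mathbb{Z}_p$-coefficient into $\iota(\mathbb{F}_p)$-standard form the contributions remain disjoint in $\supp(\partial_x^{p^t}(r))$ and cannot cancel with one another.

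For any additive total order $\preceq$, the additivity axioms force $(a', b, c) \prec (a, b, c) \iff a' < a$, so the hypothesis $\LM(r) = (a, b, c)$ gives $v_p(s_{(b, c)}) = a$ exactly, while for every other $(b', c')$ with $s_{(b', c')} \neq 0$ the value $a' := v_p(s_{(b', c')})$ places $(a', b', c') \in \supp(r)$, forcing $(a', b', c') \succ (a, b, c)$. On the coefficient side, the hypothesis $v_p(b) = t$ excludes $b$ from the exceptional set $\bigcup_{s \geq t+1}\{p^s, \dots, p^s + p^t - 1\}$ of Lemma \ref{lem: p-adic valuation of permutations}, because elements of that union have $v_p$ either equal to $s \geq t+1$ (when the offset is zero) or strictly less than $t$ (when the offset is a positive integer less than $p^t$). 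The lemma therefore yields $v_p(b!/(b-p^t)!) = v$ exactly, with $v_p(b'!/(b'-p^t)!) \geq v$ for every other $b' \geq p^t$.

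Combining these facts: the $(b, c)$-contribution to $\partial_x^{p^t}(r)$ has $\mathbb{Z}_p$-valuation exactly $a + v$, placing $(a + v, b - p^t, c)$ into the support and matching $\LM(\partial_x^{p^t}(p^a x^b y^c))$. Each other $(b', c')$-contribution has $\LM$ of the form $(a' + w, b' - p^t, c')$ with $w \geq v$; extending $\preceq$ additively to $\mathbb{Z}^3$ and translating the strict inequality $(a', b', c') \succ (a, b, c)$ by $(v, -p^t, 0)$ gives $(a' + v, b' - p^t, c') \succ (a + v, b - p^t, c)$, so every such contribution lies strictly above $(a + v, b - p^t, c)$. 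Taking the $\preceq$-minimum over the disjoint contributions yields (i); part (ii) is the verbatim analogue with the roles of $(x, b)$ and $(y, c)$ interchanged, using $v_p(c) = t$ to exclude $c$ from the exceptional set. The main bookkeeping obstacle, handled cleanly by the regrouping step above, is ensuring that the infinitely many non-leading terms of $r$ cannot conspire across different $(b', c')$ to produce a spurious exponent $\preceq (a + v, b - p^t, c)$ in $\supp(\partial_x^{p^t}(r))$; disjointness of the $(x, y)$-monomials makes this automatic.
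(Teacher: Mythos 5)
Your proof is correct and follows essentially the same strategy as the paper: compute the $p$-adic valuation of the binomial-type coefficient via Lemma~\ref{lem: p-adic valuation of permutations}, then use additivity of $\preceq$ to show every other contribution lies $\succ (a+v, b-p^t, c)$. The one genuine refinement you introduce is regrouping the standard-form expansion by $(x,y)$-exponent into the $\mathbb{Z}_p$-coefficients $s_{(b',c')}$ before differentiating; this makes the ``no cancellation across terms'' and ``the leading exponent is genuinely present'' points automatic from disjointness of the $(x,y)$-monomials, whereas the paper differentiates term-by-term in standard form and leaves those two points implicit. Your version is cleaner on exactly the issue (support misbehaving under addition in case \textbf{(B)}, cf.\ Example~\ref{ex: support is badly behaved under addition}) that makes such arguments delicate.
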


\begin{proof}
Parts (i) and (ii) are identical after a change of variables, so we prove (i). Note that $v_p(b) = t$ implies that $b \geq p^t$. We can calculate that
$$\displaystyle \partial_x^{p^t}(p^a x^b y^c) = \dfrac{b!}{(b-p^t)!} p^a x^{b-p^t} y^c.$$
To put this in standard form, we use Lemma \ref{lem: p-adic valuation of permutations}: our assumptions on $b$ imply that $$b \not \in \bigcup_{s\geq t+1} \{p^s, \dots, p^s+p^t-1\},$$ and so
$$v = v_p\left( \dfrac{b!}{(b-p^t)!}\right) = p^{t-1} + \dots + p + 1,$$
and $\LM(\partial_x^{p^t}(p^a x^b y^c)) = (a+v, b-p^t, c)$.

Now take any other $(a', b', c')\in \supp(r)$. By a similar argument, we have $$\supp(\partial_x^{p^t}(p^{a'} x^{b'} y^{c'})) \subseteq (a'+v'+\mathbb{N}, b'-p^t, c')$$ for some $v' \geq v$, and it can be checked using additivity that $(a,b,c) \preceq (a',b',c')$ implies $(a+v, b-p^t, c)\preceq (a'+v'+\mathbb{N}, b'-p^t, c')$.
\end{proof}

\textbf{For the remainder of this section,} we must take $\preceq$ to be the \emph{lexicographic} order.

\begin{ex}\label{ex: 3d over Zp, yx = xy + p^2}
Let $R = \mathbb{Z}_p[[x]][[y; \delta]]$, where $\delta(x) = p^2$. That is, $yx - xy = p^2$, which we may think of as
$$[y, -] = p^2 \partial_x, \qquad \qquad [x, -] = -p^2 \partial_y.$$
Let $P$ be a nonzero prime ideal of $R$. If $p\in P$, then we may quotient by $pR$: we get $\overline{P} \lhd \overline{R} = \mathbb{F}_p[[x,y]]$, and we have reduced to a commutative problem. So \textbf{we assume throughout that} $p\not \in P$.

As we have taken $\preceq$ to be the lexicographic order, if there exists $0\neq r\in P$ such that $\LM(r) = (a,b,c)$ for $a\neq 0$, then \emph{every} monomial in the support of $r$ is divisible by $p$, and so $r$ itself is divisible by $p$. As $P$ is a prime ideal and $p$ is central, we must therefore have $p^{-1}r\in P$.

In particular, we may choose an element $0\neq r\in P$ such that $\LM(r) = (0,b,c)$. We may also assume that $r$ has been chosen so that, for all $0\neq r'\in P$,
\begin{itemize}
\item if $\LM(r') = (0,b',c)$, then $b'\geq b$, and
\item if $\LM(r') = (0,b,c')$, then $c'\geq c$.
\end{itemize}
Since $P$ is assumed two-sided, it is preserved by $[y, -]$ and $[x, -]$, and hence (again using the assumption that $p\not\in P$) it is preserved by $\partial_x$ and $\partial_y$.

So suppose for contradiction that $b \neq 0$: we will apply Lemma \ref{lem: LM of derivatives}(i). Take $t = v_p(b)$, so that $\LM(\partial_x^{p^t}(r)) = (v, b-p^t, c)$, where $v = p^{t-1} + \dots + p + 1$. Hence $p^{-v} \partial_x^{p^t}(r) \in P$ and $\LM(p^{-v} \partial_x^{p^t}(r)) = (0, b - p^t, c)$, a contradiction to our choice of $r$. So we must have $b = 0$, and the same procedure using Lemma \ref{lem: LM of derivatives}(ii) will show that $c = 0$.

But if $\LM(r) = (0,0,0)$, then $r\in R\setminus \mathfrak{m}$, and hence is a unit since $R$ is local. This implies that $P = R$, contradicting our assumption that $p\not\in P$.

In conclusion, we have shown that every nonzero prime ideal of $R$ contains $p$, and hence that $pR$ is the unique prime ideal of $R$ of height 1.
\end{ex}


\section{Monomial orderings}

Continue to fix a representation $R = A[[\bm{x}; \bm{\sigma}, \bm{\delta}]]^n$ throughout, and let $\preceq$ be an additive total ordering on $\mathbb{N}^n$. We will describe several desirable properties that $\preceq$ might satisfy.

\subsection{Compatibility with multiplication}\label{subsec: monomial orders and compatibility with multiplication}

When $R$ is commutative, multiplication of two monomials $\bm{x}^\alpha, \bm{x}^\beta\in R$ and addition of their exponents $\alpha,\beta\in\mathbb{N}^n$ are essentially the same. In general, this is not the case. We will make the following definition as a partial remedy.

\begin{defn}\label{defn: monomial order}
We will say that $\preceq$ is \emph{multiplicative} if $\LM_\preceq (\bm{x}^\alpha \bm{x}^\beta) = \alpha+\beta$ for all $\alpha,\beta\in\mathbb{N}^n$. A multiplicative, additive total order is called a \emph{monomial order}.
\end{defn}

Note that other papers dealing with different noncommutative contexts have different definitions of monomial orders: compare e.g. \cite[Definition 11]{GalLez10}.

When we have a monomial order, the following lemma is the key to calculating with least terms. Note that we are again using the fact that we have chosen $\iota$ to be multiplicative (Remark \ref{rk: choice of section}).

\begin{lem}\label{lem: LTs and LCs}
Suppose that $\preceq$ is a monomial order on $R$.
\begin{enumerate}[label=(\roman*)]
\item If $f,g\in R$ are nonzero elements satisfying $\LM(f) = \varphi$ and $\LM(g) = \gamma$, then we get $\LT(fg) = \LC(f) \LC(g) \LC(\bm{x}^\varphi \bm{x}^\gamma) \bm{x}^{\varphi + \gamma}$.
\item If $f,g,h\in R$ are nonzero, then $\LT(f\LT(gh)) = \LT(fgh) = \LT(\LT(fg)h)$.
\item Let $\alpha,\beta,\gamma\in \mathbb{N}^n$. Then
$$\LC(\bm{x}^\alpha \bm{x}^\beta \bm{x}^\gamma) = \LC(\bm{x}^{\alpha+\beta} \bm{x}^\gamma) \LC(\bm{x}^\alpha \bm{x}^\beta) = \LC(\bm{x}^\alpha \bm{x}^{\beta+\gamma}) \LC(\bm{x}^\beta \bm{x}^\gamma).$$
\end{enumerate}
\end{lem}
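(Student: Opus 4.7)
The three parts are essentially bookkeeping consequences of the multiplicativity property $\LM(\bm{x}^\alpha\bm{x}^\beta) = \alpha + \beta$, together with the fact that $\iota(k)\subseteq A$ is central in $R$ (since all skew derivations $(\sigma_i,\delta_i)$ are $A$-linear by Setup \ref{setup}, we have $x_i a = ax_i$ for every $a\in A$). I plan to prove (i) in full, and then derive (ii) and (iii) by repeated application of (i).

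For (i), write $f = \LC(f)\bm{x}^\varphi + f'$ and $g = \LC(g)\bm{x}^\gamma + g'$, where each of $f',g'$ is either zero or has $\LM$ strictly $\succ$ than $\varphi,\gamma$ respectively. Using centrality of $\iota(k)$ to pull scalars through, expand
\[
fg \;=\; \LC(f)\LC(g)\,\bm{x}^\varphi\bm{x}^\gamma \;+\; \LC(f)\bm{x}^\varphi g' \;+\; \LC(g)f'\bm{x}^\gamma \;+\; f'g'.
\]
By multiplicativity, the first summand has $\LT = \LC(f)\LC(g)\LC(\bm{x}^\varphi\bm{x}^\gamma)\bm{x}^{\varphi+\gamma}$ (here using that $\iota$ is multiplicative, so $\LC(f)\LC(g)\in\iota(k)$ combines correctly with $\LC(\bm{x}^\varphi\bm{x}^\gamma)$). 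Two applications of Proposition \ref{propn: LM is filtered}(ii) then reduce the proof of (i) to showing that each of the other three summands has $\LM \succ \varphi+\gamma$.

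This reduction is where the real technical work lives, because $f'$ and $g'$ may be genuinely infinite sums. The auxiliary fact I will need is: if $h\in R$ is expressible as a convergent sum $\sum_i c_i\bm{x}^{\mu_i}$ with $c_i\in\iota(k)$ and every $\mu_i \succ \nu$, then $\LM(h) \succ \nu$. In case \textbf{(A)} this is immediate from $\supp(h)\subseteq\{\mu_i : c_i\neq 0\}$; in case \textbf{(B)}, Lemma \ref{lem: support of sum} gives $\supp(h) \subseteq \bigl(\bigcup_i\{\mu_i\}\bigr) + (\mathbb{N}\times\{0\}^{n-1})$, and since $\preceq$ refines $\leq_\div$ (Remark \ref{rks: well-ordering and refines div}(ii)), every exponent in $\supp(h)$ still dominates $\nu$. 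Applying this with $\nu = \varphi+\gamma$ to the expansion $\bm{x}^\varphi g' = \sum_{\beta\in\supp(g')} g_\beta\,\bm{x}^\varphi\bm{x}^\beta$, and using $\LM(\bm{x}^\varphi\bm{x}^\beta) = \varphi+\beta \succ \varphi+\gamma$ by additivity, gives $\LM(\bm{x}^\varphi g') \succ \varphi+\gamma$; the other two error terms are handled analogously.

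Part (ii) then follows by applying (i) twice along each bracketing: setting $\eta = \LM(h)$, part (i) gives $\LT(gh) = \LC(g)\LC(h)\LC(\bm{x}^\gamma\bm{x}^\eta)\bm{x}^{\gamma+\eta}$, and a second application yields $\LT(fgh) = \LC(f)\LC(gh)\LC(\bm{x}^\varphi\bm{x}^{\gamma+\eta})\bm{x}^{\varphi+\gamma+\eta}$; computing $\LT(f\LT(gh))$ directly, after pulling the central scalar $\LC(g)\LC(h)\LC(\bm{x}^\gamma\bm{x}^\eta)\in\iota(k)$ to the left, gives the same expression. The equality $\LT(\LT(fg)h) = \LT(fgh)$ is symmetric. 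Finally, part (iii) is immediate from associativity: applying (i) to $(\bm{x}^\alpha\bm{x}^\beta)\bm{x}^\gamma$ and to $\bm{x}^\alpha(\bm{x}^\beta\bm{x}^\gamma)$, and using $\LC(\bm{x}^\alpha) = \LC(\bm{x}^\gamma) = 1$, produces two expressions for the single element $\LT(\bm{x}^\alpha\bm{x}^\beta\bm{x}^\gamma) = \LC(\bm{x}^\alpha\bm{x}^\beta\bm{x}^\gamma)\bm{x}^{\alpha+\beta+\gamma}$; equating coefficients yields both identities in (iii).
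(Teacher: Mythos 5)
Your proof is correct and takes essentially the same approach as the paper: both arguments for (i) reduce the problem to multiplying individual terms and invoking multiplicativity together with additivity of $\preceq$, with the case~\textbf{(B)} convergence subtleties handled (yours a bit more explicitly) via Lemma~\ref{lem: support of sum} and the fact that $\preceq$ refines $\leq_\div$. The only cosmetic differences are that you derive (iii) directly from associativity and two applications of (i), whereas the paper routes (iii) through (ii); both are fine.
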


\begin{proof}
To prove part (i), simply write $f$ and $g$ explicitly as
$$f = \sum_{\varphi \preceq \alpha} f_\alpha \bm{x}^\alpha, \qquad g = \sum_{\gamma\preceq \beta} g_\beta \bm{x}^\beta,$$
where $f_\alpha, g_\beta \in \iota(k)$ for all $\alpha$ and $\beta$, so that $f_\varphi = \LC(f)$ and $g_\gamma = \LC(g)$ are invertible elements of $\iota(k)$. To multiply $f$ and $g$, we must multiply each term of $f$ with each term of $g$, so take $\alpha\in\supp(f)$ and $\beta\in\supp(g)$: then
$$(f_\alpha \bm{x}^\alpha)(g_\beta \bm{x}^\beta) = f_\alpha g_\beta \bm{x}^\alpha \bm{x}^\beta = f_\alpha g_\beta (\LC(\bm{x}^\alpha \bm{x}^\beta)\bm{x}^{\alpha + \beta} + \dots),$$
where the dots denote $\preceq$-greater terms.

By the multiplicativity assumption on $\preceq$, the coefficient $\LC(\bm{x}^\alpha \bm{x}^\beta)$ is always nonzero. So if $f_\alpha$ and $g_\beta$ are also nonzero, we can read off from the calculation above that the least term of $(f_\alpha \bm{x}^\alpha)(g_\beta \bm{x}^\beta)$ is $f_\alpha g_\beta \LC(\bm{x}^\alpha \bm{x}^\beta)\bm{x}^{\alpha + \beta}$. Finally, as $\preceq$ is additive, $\varphi + \gamma \preceq \alpha + \gamma \preceq \alpha + \beta$, so the least term of $fg$ must be $f_\varphi g_\gamma \LC(\bm{x}^\varphi \bm{x}^\gamma) \bm{x}^{\varphi + \gamma}$ as required.

Part (ii) follows by a similar argument. To prove part (iii), set $f = \bm{x}^\alpha$, $g = \bm{x}^\beta$ and $h = \bm{x}^\gamma$ in part (ii), and use part (i) to simplify the resulting expressions.
\end{proof}

Recall from the Introduction that $\preceq_{\mathrm{deglex}}$ and $\preceq_{\mathrm{degrevlex}}$ are often monomial orders even under relatively weak hypotheses, but that the monomial order of primary interest to us is $\preceq_{\mathrm{lex}}$. The remainder of this section is dedicated to showing that $\preceq_{\mathrm{lex}}$ is a monomial order in the triangular case $R = A[[\bm{x}; \bm{\sigma}, \bm{\delta}]]^n_\triangle$.

\subsection{An intermediate partial order}

In this subsection, we define a partial order $\leq_F$ which refines $\leq_\div$ and can be refined to any of the additive total orders $\preceq_{\mathrm{lex}}$, $\preceq_{\mathrm{deglex}}$ and $\preceq_{\mathrm{degrevlex}}$.

\begin{notn}
$ $

\begin{itemize}
\item Write $\bm{e}_i$ for the $i$th standard basis element $(0, \dots, 0, 1, 0, \dots, 0)\in\mathbb{N}^n$.
\item An \emph{elementary shuffle} is an element $\sigma_{ij} := \bm{e}_i - \bm{e}_j \in \mathbb{Z}^n$ for some $j \geq i$. A \emph{shuffle} is a finite sum of elementary shuffles. The set of shuffles in $\mathbb{Z}^n$ is denoted $\mathcal{S}_n$.
\item Given $\alpha\in\mathbb{N}^n$, we will temporarily write $D(\alpha) = \alpha + \mathbb{N}^n$ and $E(\alpha) = (\alpha + \mathcal{S}_n)\cap \mathbb{N}^n$. If $X\subseteq \mathbb{N}^n$, write $D(X) = \bigcup_{\alpha\in X} D(\alpha)$ and $E(X) = \bigcup_{\alpha\in X} E(\alpha)$.
\end{itemize}
\end{notn}

\begin{lem}\label{lem: ED = DE}
$E(D(\alpha)) \subseteq D(E(\alpha))$.
\end{lem}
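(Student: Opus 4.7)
The inclusion asserts that going up in the divisibility order and then shuffling (while landing back in $\mathbb{N}^n$) can always be replayed as shuffling first and then going up. The natural statement to prove by induction is the following stronger claim: if $\alpha,\mu\in\mathbb{N}^n$, $s\in\mathcal{S}_n$, and $\alpha+\mu+s\in\mathbb{N}^n$, then $\alpha+\mu+s\in D(E(\alpha))$. This implies $E(D(\alpha))\subseteq D(E(\alpha))$, since every element of the left-hand side has the form $(\alpha+\mu)+s$ for such $\mu, s$. I will induct on the size $|s|:=\sum_{i\le j} a_{ij}$ of the shuffle $s=\sum_{i\le j} a_{ij}\sigma_{ij}$. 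The base case $|s|=0$ is immediate: $\alpha+\mu \in D(\alpha) \subseteq D(E(\alpha))$ because $\alpha\in E(\alpha)$.

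The key combinatorial input is that a nonzero shuffle can always be decremented by an elementary shuffle: given $s\in\mathcal{S}_n\setminus\{0\}$, there exist $p\le q$ with $s-\sigma_{pq}\in\mathcal{S}_n$. Using the characterisation of shuffles as integer vectors whose partial sums $T_\ell(s)=\sum_{k=1}^\ell s_k$ are $\geq 0$ (with $T_n(s)=0$), I would pick $q$ to be the smallest index with $s_q<0$ and $p<q$ to be the largest index with $s_p>0$. Then $s_\ell=0$ for $p<\ell<q$, and the partial sums satisfy $T_p(s)=T_{p+1}(s)=\cdots=T_{q-1}(s)\ge -s_q\ge 1$, so subtracting $\sigma_{pq}$ decreases these partial sums by $1$ and leaves the others unchanged, keeping all of them nonnegative.

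Given such a $\sigma_{pq}$, the inductive step splits into two cases according to the value of $\alpha_q$. If $\alpha_q\ge 1$, set $\alpha'=\alpha+\sigma_{pq}\in\mathbb{N}^n$; then $\alpha'\in E(\alpha)$ and $\alpha+\mu+s=\alpha'+\mu+(s-\sigma_{pq})$, so the inductive hypothesis applied to $(\alpha',\mu,s-\sigma_{pq})$ places $\beta$ in $D(E(\alpha'))$. One then observes, essentially from the fact that the sum of two shuffles is a shuffle, that $\alpha''\in E(\alpha)$ forces $E(\alpha'')\subseteq E(\alpha)$, so $D(E(\alpha'))\subseteq D(E(\alpha))$. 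If instead $\alpha_q=0$, then $\beta_q\ge 0$ together with $s_q<0$ forces $\mu_q\ge -s_q\ge 1$, so $\mu':=\mu+\sigma_{pq}\in\mathbb{N}^n$, and the inductive hypothesis applied to $(\alpha,\mu',s-\sigma_{pq})$ gives the conclusion directly.

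The main obstacle is the combinatorial lemma on peeling off an elementary shuffle from a nonzero shuffle; the subsequent case analysis on whether $\alpha$ can absorb the chosen elementary shuffle, or whether the absorption must instead be borne by $\mu$, is then routine.
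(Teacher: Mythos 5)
Your proof is correct and rests on the same two-case dichotomy as the paper's: either $\alpha_q \geq 1$ and $\alpha$ can absorb the elementary shuffle $\sigma_{pq}$ (becoming some $\alpha' \in E^1(\alpha)$), or $\alpha_q = 0$ and the absorption must be borne by the ``$D$''-part instead. The paper phrases this with a single elementary shuffle $\sigma_{ij}$ and a single basis vector $\bm{e}_l$ from the start, reducing to $E^1(D^1(\alpha)) \subseteq D^1(E^1(\alpha))$, whereas you carry the full shuffle $s$ and the full increment $\mu$ and induct on $|s|$; these are different book-keeping schemes for the same idea.

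Where your write-up genuinely adds something is in making the reduction step explicit. The paper's assertion that $\gamma \in E(\beta)$ implies $\gamma \in E^1(\cdots E^1(\beta)\cdots)$ --- that is, one can travel from $\beta$ to $\gamma$ by elementary shuffles while staying inside $\mathbb{N}^n$ at every intermediate step --- is stated as obvious but really needs the same peeling argument you give: the partial-sum characterisation $T_\ell(s) \geq 0$, $T_n(s) = 0$ of shuffles, together with the observation that your choice of $p,q$ makes $s - \sigma_{pq}$ a shuffle again and that the $\mathbb{N}^n$ constraint (here $\beta_q \geq 1$, or in your setup $\mu_q \geq -s_q$) is automatically satisfied when $\alpha_q = 0$. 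By keeping $\mu$ as an explicit slack variable you sidestep the need to prove the intermediate-stay-in-$\mathbb{N}^n$ property for $E^1$ separately, which is a small but real improvement in rigour. The rest --- the base case, the observation that $\alpha'' \in E(\alpha)$ implies $E(\alpha'') \subseteq E(\alpha)$ because $\mathcal{S}_n$ is closed under addition --- is fine.
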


\begin{proof}
Write $D^1(\alpha) = \{\alpha + \bm{e}_i \;:\; 1\leq i\leq n\}$ and $E^1(\alpha) = \{\alpha + \sigma_{ij} \;:\; 1\leq i\leq j\leq n\}\cap \mathbb{N}^n$. If $\beta\in D(\alpha)$, then $\beta \in D^1(\dots(D^1(\alpha))\dots)$ (where $D^1$ has been repeatedly applied some finite number of times), and likewise if $\gamma\in E(\beta)$ then $\gamma\in E^1(\dots(E^1(\beta))\dots)$. It will clearly suffice to show that $E^1(D^1(\alpha)) \subseteq D^1(E^1(\alpha))$.

So take $\beta = \alpha + \bm{e}_l\in D^1(\alpha)$ and $\gamma = \beta + \sigma_{ij}\in E^1(\beta)$. We will construct $\beta'\in E^1(\alpha)$ such that $\gamma\in D^1(\beta')$.

\textbf{Case 1.} If $\alpha_j \geq 1$, then we take $\beta' = \alpha + \sigma_{ij} \in \mathbb{N}^n$ and $\gamma = \beta' + \bm{e}_l$.

\textbf{Case 2.} If $\alpha_j = 0$, then since $\gamma_j\geq 0$ we must have $l = j$, so we take $\beta' = \alpha$ and $\gamma = \beta' + \bm{e}_i$.
\end{proof}

Now define $F(X) = D(E(X))$. Lemma \ref{lem: ED = DE} implies that $F(F(X)) = F(X)$.

\newcommand{\mycolor}{white}

\centerline{
\begin{tikzpicture}[scale=0.75]
    \draw [->](-0.5,0) -- (7.8,0);
    \draw [->](0,-0.5) -- (0,5.8);
    \node[] at (7.5,-0.3) {$a$};
    \node[] at (-0.3,5.5) {$b$};

  \foreach \x in {0,...,7} {
    \foreach \y in {0,...,5} {
      \path(0,0)\pgfextra{\ifthenelse{\(\x=1 \AND \y > 2\) \OR \(\x=2 \AND \y > 1\) \OR \(\x=3 \AND \y > 0\) \OR \(\x>3\)}{\def\mycolor{gray!70}}{}}
      node [circle,draw,fill=\mycolor,minimum size=10]  (\x\y) at (\x,\y) {};}}
\end{tikzpicture}
}

\centerline{
Figure 1: $\mathbb{N}^2 = \{(a,b) : a,b\in\mathbb{N}\}$, with $F(\alpha)$ for $\alpha = (1,3)$ shaded
}

\begin{lem}\label{lem: F is compatible with lex orders}
Let $\preceq$ be any of $\preceq_{\mathrm{lex}}$, $\preceq_{\mathrm{deglex}}$ or $\preceq_{\mathrm{degrevlex}}$. Then $\inf_\preceq (F(\alpha)) = \alpha$.
\end{lem}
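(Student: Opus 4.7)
The plan is to reduce the problem to showing $0 \preceq \beta - \alpha$ for every $\beta \in F(\alpha)$ (using additivity of $\preceq$), and then to exploit the combinatorial structure of shuffles. Note first that $\alpha \in F(\alpha)$ (take the empty shuffle and $\mu = 0$), so the infimum is attained if and only if $\alpha$ is a lower bound.

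Every $\beta \in F(\alpha) = D(E(\alpha))$ has the form $\beta = \alpha + s + \mu$ for some shuffle $s = \sum_{k} \sigma_{i_k j_k}$ (with $i_k \leq j_k$) such that $\alpha + s \in \mathbb{N}^n$, and some $\mu \in \mathbb{N}^n$. By additivity of $\preceq$, proving $\alpha \preceq \beta$ amounts to proving $0 \preceq s + \mu$.

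The combinatorial lemma I would then isolate is the following: for any shuffle $s = \sum_k \sigma_{i_k j_k}$ with $i_k \leq j_k$, and for any $1 \leq \ell \leq n$,
\begin{align*}
\sum_{m=1}^{\ell} s_m &= \#\{k : i_k \leq \ell\} - \#\{k : j_k \leq \ell\} \;\geq\; 0,\\
\sum_{m=\ell}^{n} s_m &= \#\{k : i_k \geq \ell\} - \#\{k : j_k \geq \ell\} \;\leq\; 0,
\end{align*}
since $i_k \leq j_k$ forces $\mathbf{1}[i_k \leq \ell] \geq \mathbf{1}[j_k \leq \ell]$ and $\mathbf{1}[i_k \geq \ell] \leq \mathbf{1}[j_k \geq \ell]$ termwise. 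In particular $|s| = 0$. Since the entries of $\mu$ are nonnegative, the same inequalities hold with $s$ replaced by $s+\mu$ in the first line.

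From here the three cases are short. \textbf{Lex:} if $s+\mu \neq 0$, let $\ell$ be its leftmost nonzero coordinate; then $(s+\mu)_\ell = \sum_{m=1}^\ell (s+\mu)_m \geq 0$, so $(s+\mu)_\ell > 0$. \textbf{Deglex and degrevlex:} $|s+\mu| = |\mu| \geq 0$, so if $\mu \neq 0$ the comparison is decided by degree. If $\mu = 0$ (so $s+\mu = s$), then for deglex the leftmost-nonzero argument above applies, while for degrevlex we let $r$ be the rightmost nonzero coordinate of $s$ and use $s_r = \sum_{m=r}^n s_m \leq 0$, hence $s_r < 0$, which is exactly the degrevlex condition for $0 \preceq s$.

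The only conceptual point that needs care is the combinatorial lemma on partial sums of shuffles; once that is stated cleanly, each of the three orderings follows by a one-line check, and no extra hypothesis on $\alpha$ (such as $\alpha + s \in \mathbb{N}^n$) is used beyond ensuring $\beta \in \mathbb{N}^n$. I do not expect any serious obstacle here; the substance is entirely in recognising that ``$+1$ never appears strictly to the right of its paired $-1$'' forces exactly the monotonicity of partial sums that the lex/deglex/degrevlex orders are designed to detect.
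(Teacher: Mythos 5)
Your proof is correct, and it takes a genuinely different route from the paper's. The paper reduces to the atomic case: with the notation $D^1, E^1$ from the proof of Lemma~\ref{lem: ED = DE}, it shows $\alpha \preceq \beta$ whenever $\beta \in D^1(\alpha)$ or $\beta \in E^1(\alpha)$ (a one-line check against Examples~\ref{exs: orderings}.1--3) and then chains these single steps. You instead argue globally: write $\beta = \alpha + s + \mu$ with $s$ a shuffle and $\mu \in \mathbb{N}^n$, isolate the partial-sum inequalities $\sum_{m \le \ell} s_m \ge 0$ and $\sum_{m \ge \ell} s_m \le 0$ (with $|s| = 0$) from the constraint $i_k \le j_k$ in each $\sigma_{i_k j_k}$, and read off the lex, deglex and degrevlex conditions directly from these. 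Both proofs are short, but yours has a small robustness advantage: the paper's chaining argument leans on the claim (asserted without proof inside Lemma~\ref{lem: ED = DE}) that $\gamma \in E(\alpha)$ can be realised as $E^1(\cdots E^1(\alpha)\cdots)$ with every intermediate step staying in $\mathbb{N}^n$; this is true but requires a small greedy argument, whereas your partial-sum computation never needs the intermediate points $\alpha + s'$ to be nonnegative. The only nitpick is the aside ``the infimum is attained if and only if $\alpha$ is a lower bound,'' which is not literally an equivalence --- what you actually use (and what suffices) is that $\alpha \in F(\alpha)$ together with $\alpha$ being a lower bound forces $\alpha = \min_\preceq F(\alpha)$.
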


\begin{proof}
With notation as in the proof of Lemma \ref{lem: ED = DE}, it suffices to show that, if $\beta\in D^1(\alpha)$ or $\beta\in E^1(\alpha)$, then $\alpha \preceq \beta$, which follows immediately from the definitions in Examples \ref{exs: orderings}.1--3.
\end{proof}

\begin{defn}
Define $\leq_F$ to be the relation on $\mathbb{N}^n$ defined by $\alpha \leq_F \beta \Leftrightarrow \beta\in F(\alpha)$. Note that $\leq_F$ is clearly reflexive, it is transitive by Lemma \ref{lem: ED = DE}, and it is antisymmetric by Lemma \ref{lem: F is compatible with lex orders}, so it is a partial order. 
\end{defn}

A proof similar to that of Lemma \ref{lem: ED = DE} shows that $\leq_F$ also satisfies an additivity property:

\begin{lem}\label{lem: F+ stable under addition}
If $\alpha \leq_F \beta$, then $\alpha+\gamma \leq_F \beta + \gamma$ for all $\gamma\in\mathbb{N}^n$.\qed
\end{lem}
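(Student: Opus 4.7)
The plan is to unwind the definition of $\leq_F$ and show directly that translation by any $\gamma \in \mathbb{N}^n$ preserves membership in $F(\alpha)$. More precisely, I want to establish the set-theoretic inclusion $F(\alpha) + \gamma \subseteq F(\alpha + \gamma)$, from which the lemma follows immediately on taking $\beta \in F(\alpha)$.

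First I would dispatch the two easier inclusions $D(\alpha) + \gamma \subseteq D(\alpha + \gamma)$ and $E(\alpha) + \gamma \subseteq E(\alpha + \gamma)$. The first is trivial from $D(\alpha) = \alpha + \mathbb{N}^n$. For the second, if $\delta \in E(\alpha)$ then $\delta = \alpha + s$ for some shuffle $s \in \mathcal{S}_n$ with $\delta \in \mathbb{N}^n$; adding $\gamma$ gives $\delta + \gamma = (\alpha + \gamma) + s$, and since $\delta \in \mathbb{N}^n$ and $\gamma \in \mathbb{N}^n$ we have $\delta + \gamma \in \mathbb{N}^n$, so $\delta + \gamma \in E(\alpha + \gamma)$.

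Combining these, suppose $\beta \in F(\alpha) = D(E(\alpha))$, so that $\beta = \delta + \mu$ for some $\delta \in E(\alpha)$ and $\mu \in \mathbb{N}^n$. Then $\beta + \gamma = (\delta + \gamma) + \mu$, with $\delta + \gamma \in E(\alpha + \gamma)$ by the previous paragraph and $\mu \in \mathbb{N}^n$, so $\beta + \gamma \in D(E(\alpha + \gamma)) = F(\alpha + \gamma)$, which is exactly $\alpha + \gamma \leq_F \beta + \gamma$.

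I do not expect any real obstacle here: the only subtle point is the $\mathbb{N}^n$-constraint built into the definition of $E$, and translating by a nonnegative vector $\gamma$ clearly can only help maintain that constraint. (A more pedantic proof imitating Lemma \ref{lem: ED = DE} would reduce to the single-elementary-shuffle and single-$D^1$-step case, but since the shuffle $s$ already witnesses membership in $\alpha + \mathcal{S}_n$ on the nose after translation, the direct argument above seems strictly simpler.)
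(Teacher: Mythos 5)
Your proof is correct, and it is in fact a cleaner route than the one the paper hints at. The paper omits the proof and simply says the argument is ``similar to that of Lemma~\ref{lem: ED = DE}''; the proof of that lemma requires reducing to the single-step operators $D^1$ and $E^1$ and then running a small case analysis in order to commute $E$ past $D$ (precisely because the intersection with $\mathbb{N}^n$ built into $E$ can obstruct a naive swap). Your observation is that translation by $\gamma\in\mathbb{N}^n$ distributes over both $D$ and $E$ on the nose: the witnessing shuffle $s$ and the witnessing $\mathbb{N}^n$-vector $\mu$ both survive translation unchanged, and the $\mathbb{N}^n$-membership constraint in $E$ can only become easier to satisfy after adding a nonnegative vector. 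This yields $F(\alpha)+\gamma\subseteq F(\alpha+\gamma)$ directly, with no reduction to elementary steps and no casework, which is exactly the lemma. No gaps.
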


\begin{rk}
$\leq_F$ is \emph{not} a total order on $\mathbb{N}^n$ for any $n > 1$: for instance, on $\mathbb{N}^2$, we have neither $(1,0) \leq_F (0,2)$ nor $(0,2) \leq_F (1,0)$.
\end{rk}

\subsection{Calculating products of monomials}\label{subsec: generalised monomials}

We begin by making the following definition:

\begin{defn}\label{defn: triangular order}
An additive total order $\preceq$ on $\mathbb{N}^n$ is \emph{triangular} if it refines $\leq_F$, i.e. if $\alpha \leq_F \beta$ implies $\alpha \preceq \beta$.
\end{defn}

In this section, we aim to justify this terminology by proving the following theorem, which links the triangularity condition on $R$ (as in cases \textbf{(A${}_\triangle$)} and \textbf{(B${}_\triangle$)} of Setup \ref{setup}) and the triangularity condition on $\preceq$ just defined.

\begin{thm}\label{thm: LM compatibility}
Let $R = A[[\bm{x}; \bm{\sigma}, \bm{\delta}]]^n_\triangle$, and let $\preceq$ be any triangular additive total order. Then $\LM_\prec(\bm{x}^\alpha \bm{x}^\beta) = \alpha+\beta$, and in particular, $\preceq$ is a monomial ordering.
\end{thm}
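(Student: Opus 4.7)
The plan is to handle the adjacent case $\LM(x_s x_r)$ (for $s > r$) via Proposition \ref{propn: sigma and delta of x_r}, and then extend to general $\bm{x}^\alpha \bm{x}^\beta$ by a double induction controlled by $\mathfrak{m}$-adic truncation.

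For the adjacent case, Proposition \ref{propn: sigma and delta of x_r} gives an expansion
$$x_s x_r = q \bm{x}^{\bm{e}_r + \bm{e}_s} + \sum_{t<r} b_t \bm{x}^{\bm{e}_t + \bm{e}_s} + \sum_\gamma c_\gamma \bm{x}^{\gamma + \bm{e}_s} + \sum_\gamma c'_\gamma \bm{x}^\gamma,$$
with coefficients in $\iota(k)$, $q$ a unit, and the $\gamma$-sums over $\gamma \in \mathbb{N}^r \times \{0\}^{n-r}$ with $|\gamma| \geq 2$. I would verify that each correction exponent $\nu$ strictly exceeds $\bm{e}_r + \bm{e}_s$ in $\leq_F$: the shift $\bm{e}_t - \bm{e}_r$ is an elementary shuffle when $t < r$; for $\gamma + \bm{e}_s$, the condition $|\gamma| \geq 2$ ensures $\gamma \geq_\div \bm{e}_t$ for some $t \leq r$, hence $\gamma >_F \bm{e}_r$, and Lemma \ref{lem: F+ stable under addition} promotes this to $\gamma + \bm{e}_s >_F \bm{e}_r + \bm{e}_s$; for $\gamma$, similarly $\gamma \geq_\div \bm{e}_t + \bm{e}_{t'}$ for some $t, t' \leq r$, and a two-step shuffle places $\bm{e}_t + \bm{e}_{t'}$ in $E(\bm{e}_r + \bm{e}_s)$. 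Since $\preceq$ refines $\leq_F$, every correction exponent is $\succ \bm{e}_r + \bm{e}_s$, so $\LM(x_s x_r) = \bm{e}_r + \bm{e}_s$ with leading coefficient $q \in \iota(k)^\times$.

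For general $\bm{x}^\alpha \bm{x}^\beta$, view the product as a word $w = x_{i_1}\cdots x_{i_m}$ with abelianization $\mu_w := \sum_j \bm{e}_{i_j} = \alpha + \beta$, and aim to prove $\LM(w) = \mu_w$ by iterating the adjacent rewriting at successive inversions. Each step produces $w = q w^{(1)} + R_{\mathrm{extra}}$, where $w^{(1)}$ is a word with abelianization $\mu_w$ but $\mathrm{inv}(w^{(1)}) = \mathrm{inv}(w) - 1$, and $R_{\mathrm{extra}}$ is a sum of words whose abelianizations are strictly $>_F \mu_w$. The main obstacle is that this rewriting does not terminate: each step can spawn infinitely many correction words (since $\sigma_s(x_r)$ and $\delta_s(x_r)$ are themselves power series), and those can have further inversions. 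I would circumvent this by working modulo $\mathfrak{m}^N$ for each $N$, in which only the finitely many corrections of total degree $< N$ survive (any word of length $\geq N$ is in $\mathfrak{m}^N$, as every $x_i \in \mathfrak{m}$).

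Within $R / \mathfrak{m}^N$, I would do Noetherian induction on the finite poset $\{\mu' : |\mu'| < N\}$ with respect to the reverse of $\leq_F$ (well-founded on any finite set), proving the claim for $\mu$ under the assumption that it holds for all $\mu' >_F \mu$ in this set; the inner induction is on $\mathrm{inv}(w)$ with trivial base case $\mathrm{inv}(w) = 0$. At the inductive step, apply the inner hypothesis to $w^{(1)}$ and the outer hypothesis to each correction word with abelianization $\mu' >_F \mu_w$ and $|\mu'| < N$; the remaining corrections vanish modulo $\mathfrak{m}^N$. Assembling gives $w \equiv c \bm{x}^{\mu_w} + (\text{higher-}\preceq\text{ terms}) \pmod{\mathfrak{m}^N}$ for some $c \in \iota(k)^\times$, and passing to the inverse limit via $\bigcap_N \mathfrak{m}^N = 0$ recovers the full claim in $R$. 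A secondary subtlety, specific to case \textbf{(B)}, is that $\iota(k)$ is not closed under addition, so coefficient sums must be re-expanded in standard form via Lemma \ref{lem: standard form in A}; by Lemma \ref{lem: support of sum}, this only spreads support in the $\bm{e}_1$-direction, preserving $\preceq$-inequalities since $\preceq$ refines $\leq_\div$.
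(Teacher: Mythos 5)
Your proposal is correct, and it reaches the same endpoint as the paper's proof — reduction to the adjacent case $\bm{x}_{[s,r]}$ via Proposition~\ref{propn: sigma and delta of x_r}, with all correction exponents lying in $F^+(\bm{e}_r+\bm{e}_s)$ — but it handles the non-terminating rewriting by a genuinely different device. The paper works with \emph{generalised} monomials throughout (Lemmas~\ref{lem: effect of single transpose} and~\ref{lem: can reorder a single generalised monomial}), and in Proposition~\ref{propn: multiplication is compatible with reordering} enumerates $F(\bm{i})$ as an $\mathbb{N}$-indexed increasing sequence in $\preceq_{\mathrm{deglex}}$, peels off the degree-$\alpha_s$ generalised terms at stage $s$, and argues directly that the tail of error terms converges to zero in the $\mathfrak{m}$-adic topology because $|\alpha_s|\to\infty$. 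You instead truncate modulo $\mathfrak{m}^N$ at the outset, which makes both the correction set and the poset $\{\mu': |\mu'|<N\}$ finite, run a well-founded double induction on (abelianization under reverse $\leq_F$, inversion count), and recover the statement in $R$ by $\bigcap_N\mathfrak{m}^N = 0$. Both arguments are sound; yours avoids any explicit convergence bookkeeping inside the induction at the cost of setting up the two-variable well-founded order, whereas the paper's $\mathbb{N}$-indexed enumeration makes the convergence visible at a single point. One small thing to make fully explicit in a write-up: when you assemble $w \equiv qw^{(1)} + R_{\mathrm{extra}}$ from the inductive hypotheses and pass back to standard form in case~\textbf{(B)}, you need not only that re-expansion spreads support only in the $\bm{e}_1$-direction (Lemma~\ref{lem: support of sum}), but also that the resulting $\bm{x}^{\mu_w}$-coefficient remains a unit of $\iota(k)$; this follows because the leading coefficients you multiply together stay in $\iota(k)^\times$ since $\iota$ is multiplicative (Remark~\ref{rk: choice of section}), and the $\bm{e}_1$-spreading from higher-$\preceq$ terms never reaches back to $\mu_w$ since $\preceq$ refines $\leq_\div$. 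You gesture at this but it deserves a sentence.
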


In particular, once we have proved this, Lemma \ref{lem: F is compatible with lex orders} will show that $\preceq_{\mathrm{lex}}$, $\preceq_{\mathrm{deglex}}$ and $\preceq_{\mathrm{degrevlex}}$ are all monomial orderings.

To prove this, we need some notation for dealing with products $\bm{x}^\alpha \bm{x}^\beta$. This product takes the form $x_{i_1} x_{i_2} \dots x_{i_m}$, and so we will work with the more general context of \emph{generalised monomials}, cf. Remark \ref{rk: standard monomials}. Almost all of the following notation is temporary and will be discarded after this section.

\begin{defn}
$ $

\begin{enumerate}
\item A \emph{generalised monomial} in $R$ is any element $x_{i_1} x_{i_2} \dots x_{i_m}\in R$, where $m$ is any nonnegative integer and each of the indices $i_1, i_2, \dots, i_m$ is chosen from the set $\{1,2,\dots, n\}$. (When $m=0$, we interpret the empty product to be equal to $1_R$.)

The \emph{generalised exponent} of $x_{i_1} x_{i_2} \dots x_{i_m}$ will refer to the $m$-tuple of its indices. For disambiguation, we will write this tuple using square brackets, $[\bm{i}] = [i_1, \dots, i_m]$, and then we will write the element as $\bm{x}_{[\bm{i}]} = x_{i_1} x_{i_2} \dots x_{i_m}$.

\item If $[\bm{i}] = [i_1, \dots, i_m]$ and $[\bm{j}] = [j_1, \dots, j_\ell]$, we will write $[\bm{i}, \bm{j}]$ for their \emph{concatenation} $[\bm{i}, \bm{j}] = [i_1, \dots, i_m, j_1, \dots, j_\ell]$. This allows us to write the product $\bm{x}_{[\bm{i}]} \bm{x}_{[\bm{j}]}$ as $\bm{x}_{[\bm{i},\bm{j}]}$. We will use the same notation for concatenations of several tuples: $[[\bm{i}, \bm{j}], \bm{k}] = [\bm{i}, \bm{j}, \bm{k}]$, etc.

\item The \emph{total degree} of $[\bm{i}] = [i_1, \dots, i_m]$ is $|\bm{i}| = m$.

\item The \emph{reordering} of $[\bm{i}]$ is the element $\rho(\bm{i}) = (\alpha_1, \dots, \alpha_n)\in\mathbb{N}^n$, defined so that $\alpha_r$ is the number of times that $r$ appears among the indices $i_1, \dots, i_m$. The \emph{reordering} of $\bm{x}_{[\bm{i}]}$ is $\bm{x}^{\rho(\bm{i})}$.

Note that, if $R$ is commutative, then each $\bm{x}_{[\bm{i}]}$ is equal to its own reordering. If $R$ is noncommutative, but $[\bm{i}] = [i_1, \dots, i_m]$ is \emph{weakly increasing} ($i_1 \leq \dots \leq i_m$), then $\bm{x}_{[\bm{i}]}$ is simply alternative notation for $\bm{x}^{\rho(i)}$.

\item If $[\bm{i}] = [i_1, \dots, i_m]$ for some $m\geq 2$, then a \emph{transpose} of $[\bm{i}]$ is a tuple $[\bm{i}^\top]$ obtained from $[\bm{i}]$ by swapping the positions of two adjacent entries. That is, for some $1\leq r\leq m-1$, and writing $[\bm{i}_-] = [i_1, \dots, i_{r-1}]$ and $[\bm{i}_+] = [i_{r+2}, \dots, i_m]$ so that $[\bm{i}] = [\bm{i}_-, i_r, i_{r+1}, \bm{i}_+]$, the tuple $[\bm{i}^\top]$ is equal to $[\bm{i}_-, i_{r+1}, i_r, \bm{i}_+]$.
\end{enumerate}
\end{defn}

\begin{notn}
If $\alpha\in\mathbb{N}^n$ is an exponent, write $F^+(\alpha) = F(\alpha) \setminus \{\alpha\}$.

If $[\bm{i}]$ is the tuple of indices of a generalised monomial, write $F(\bm{i})$ for the set of all tuples $[\bm{j}]$ such that $\rho(\bm{j}) \in F(\rho(\bm{i}))$, and similarly $F^+(\bm{i})$.

On generalised monomials, the relation $[\bm{j}] \in F(\bm{i})$ is no longer antisymmetric, so we will avoid writing $\leq_F$.
\end{notn}

\begin{lem}\label{lem: effect of single transpose}
Let $[\bm{i}]$ be arbitrary of total degree at least 2, and $[\bm{i}^\top]$ any transpose of $[\bm{i}]$. Then there exist constants $q\in \iota(k)^\times$ and $a_{[\bm{j}]}\in \iota(k)$ for all $[\bm{j}] \in F^+(\bm{i})$ such that
$$\bm{x}_{[\bm{i}]} - q\bm{x}_{[\bm{i}^\top]} = \sum_{[\bm{j}] \in F^+(\bm{i})} a_{[\bm{j}]} \bm{x}_{[\bm{j}]}.$$
\end{lem}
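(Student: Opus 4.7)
The plan is to split into cases according to the relation between $i_r$ and $i_{r+1}$. If $i_r = i_{r+1}$ then $[\bm{i}^\top] = [\bm{i}]$, so take $q = 1$ and every $a_{[\bm{j}]} = 0$. Otherwise the cases $i_r < i_{r+1}$ and $i_r > i_{r+1}$ are symmetric (swap the roles of $[\bm{i}]$ and $[\bm{i}^\top]$ and invert $q$), so I focus on $t := i_r < s := i_{r+1}$. Writing $[\bm{i}_-] = [i_1,\dots,i_{r-1}]$ and $[\bm{i}_+] = [i_{r+2},\dots,i_m]$, I have $\bm{x}_{[\bm{i}]} = \bm{x}_{[\bm{i}_-]}\,x_t x_s\,\bm{x}_{[\bm{i}_+]}$ and $\bm{x}_{[\bm{i}^\top]} = \bm{x}_{[\bm{i}_-]}\,x_s x_t\,\bm{x}_{[\bm{i}_+]}$, and the core relation is $x_s x_t = \sigma_s(x_t)\,x_s + \delta_s(x_t)$.

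Expanding $\sigma_s(x_t)$ and $\delta_s(x_t)$ via Proposition \ref{propn: sigma and delta of x_r}, substituting, and distributing produces
$$\bm{x}_{[\bm{i}^\top]} = q\,\bm{x}_{[\bm{i}]} + \sum_{u=1}^{t-1} b_u\,\bm{x}_{[\bm{i}_-,\,u,\,s,\,\bm{i}_+]} + \sum_{\gamma} c_\gamma\,\bm{x}_{[\bm{i}_-]}\bm{x}^\gamma x_s\,\bm{x}_{[\bm{i}_+]} + \sum_{\gamma} c'_\gamma\,\bm{x}_{[\bm{i}_-]}\bm{x}^\gamma\,\bm{x}_{[\bm{i}_+]},$$
where $q \in \iota(k)^\times$, the remaining scalars lie in $\iota(k)$, and $\gamma$ ranges over $\mathbb{N}^t\times\{0\}^{n-t}$ with $|\gamma|\geq 2$. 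Since $\iota$ is multiplicative and $-1\in\iota(k)$ (Remark \ref{rk: choice of section}), multiplying through by $-q^{-1}$ keeps all coefficients in $\iota(k)$, and rearranging yields the desired identity $\bm{x}_{[\bm{i}]} - q^{-1}\bm{x}_{[\bm{i}^\top]} = \sum a_{[\bm{j}]}\bm{x}_{[\bm{j}]}$.

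The main obstacle is to check that every generalised monomial on the right-hand side has reordering strictly $F$-greater than $\rho(\bm{i})$. There are three families. \textbf{(i)} Each term $b_u\,\bm{x}_{[\bm{i}_-,\,u,\,s,\,\bm{i}_+]}$ with $u<t$ shifts the reordering by $\sigma_{u,t}$, so it lies in $E^1(\rho(\bm{i})) \subseteq F^+(\rho(\bm{i}))$. \textbf{(ii)} The $c_\gamma$-terms shift the reordering by $\gamma - \bm{e}_t$ with $\gamma$ supported in $\{1,\dots,t\}$ and $|\gamma|\geq 2$; if $\gamma_t\geq 1$ the shift is a nonzero element of $\mathbb{N}^n$ (landing in $D^+(\rho(\bm{i}))$), otherwise pick $u<t$ with $\gamma_u\geq 1$ and decompose $\gamma - \bm{e}_t = \sigma_{u,t} + (\gamma - \bm{e}_u)$ with nonnegative residue. \textbf{(iii)} The $c'_\gamma$-terms shift the reordering by $\gamma - \bm{e}_t - \bm{e}_s$; using $|\gamma|\geq 2$, write $\gamma = \bm{e}_u + \bm{e}_{u'} + \gamma''$ with $u,u'\leq t$ and $\gamma''\in\mathbb{N}^n$, so the shift decomposes as $\sigma_{u,s} + \sigma_{u',t} + \gamma''$, i.e.\ two successive elementary shuffles followed by a nonnegative vector. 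That each intermediate vector stays in $\mathbb{N}^n$ uses only $\rho(\bm{i})_s,\rho(\bm{i})_t\geq 1$, which is automatic since $s$ and $t$ appear in $[\bm{i}]$; nonzeroness of the shift in (iii) is immediate from the $s$-coordinate being $-1$. Invoking Lemma \ref{lem: ED = DE} (equivalently, transitivity of $F$) places every summand in $F^+(\rho(\bm{i}))$, completing the proof.
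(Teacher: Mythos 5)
Your proof is correct and takes essentially the same approach as the paper's: expand $x_s x_t$ via Proposition \ref{propn: sigma and delta of x_r}, rearrange, and decompose each exponent shift as one or two elementary shuffles plus a nonnegative vector to verify $F^+$-membership. The only cosmetic difference is that the paper first proves the two-letter identity $[\bm{i}]=[s,r]$ and then pads by $[\bm{i}_-]$ and $[\bm{i}_+]$ using the additivity of $\leq_F$ (Lemma \ref{lem: F+ stable under addition}), whereas you track the full reorderings directly; both routes yield the same shuffle decompositions.
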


\begin{rk*}
The sum on the right-hand side makes sense for arbitrary choices of coefficients $a_{[\bm{j}]}\in \iota(k)$. Indeed, for each fixed positive integer $d$, there are only finitely many $[\bm{j}]\in F^+(\bm{i})$ of total degree $d$, and hence only finitely many of these terms are nonzero modulo $\mathfrak{m}^{d+1}$ by Lemma \ref{lem: standard form of powers of m}.
\end{rk*}

In the following proof, write $\bm{e}_i$ for the $i$th standard basis element $(0, \dots, 0, 1, 0, \dots, 0)\in\mathbb{N}^n$. 

\begin{proof}
We begin with the case $[\bm{i}] = [s,r]$, where without loss of generality we may assume that $s > r$, so that $[\bm{i}^\top] = [r,s]$ and $\rho(\bm{i}) = \bm{e}_r + \bm{e}_s$. Write $\sigma = \sigma_s$ and $\delta = \delta_s$: then
$$\bm{x}_{[\bm{i}]} = x_s x_r = \sigma(x_r) x_s + \delta(x_r),$$
and we adopt the notation of Proposition \ref{propn: sigma and delta of x_r}:
\begin{align}
\notag \bm{x}_{[\bm{i}]} &= \left(q x_r +  \sum_{t=1}^{r-1} b_t x_t + \sum_\gamma c_\gamma \bm{x}^\gamma\right) x_s + \left(\sum_\gamma  c'_\gamma \bm{x}^\gamma\right),\text{ i.e.}\\
\label{eqn: s,r expansion} \bm{x}_{[\bm{i}]} - q\bm{x}_{[\bm{i}^\top]}&= \underbrace{\sum_{t=1}^{r-1} b_t \bm{x}^{\bm{e}_t+\bm{e}_s}}_{(1)} + \underbrace{\sum_\gamma c_\gamma \bm{x}^{\gamma + \bm{e}_s}}_{(2)} + \underbrace{\sum_\gamma  c'_\gamma \bm{x}^\gamma}_{(3)},
\end{align}
where in all three sums $\gamma$ ranges over those elements of $\mathbb{N}^r\times \{0\}^{n-r}$ of degree at least 2. It is now easy to check that each monomial on the right-hand side is an element of $F^+(\bm{e}_r + \bm{e}_s) \subseteq F^+(\bm{i}^\top)$:
\begin{enumerate}[label=(\arabic*)]
\item $\bm{e}_t + \bm{e}_s = (\bm{e}_r + \bm{e}_s) + \sigma_{tr} \in E^1(\bm{e}_r + \bm{e}_s)$,
\item choose any $j$ such that $\gamma_j \geq 1$: then $\gamma' = \gamma - \bm{e}_j \in \mathbb{N}^n$, and so $\gamma + \bm{e}_s = (\bm{e}_r + \bm{e}_s) + \sigma_{jr} + \gamma'$ $\in D(E^1(\bm{e}_r + \bm{e}_s))$,
\item choose any $j < j'$ such that $\gamma_j, \gamma_{j'} \geq 1$, or $j = j'$ such that $\gamma_j \geq 2$: then $\gamma' = \gamma - \bm{e}_j - \bm{e}_{j'} \in \mathbb{N}^n$, and so $\gamma = (\bm{e}_r + \bm{e}_s) + \sigma_{jr} + \sigma_{j's} + \gamma' \in D(E^1(E^1(\bm{e}_r + \bm{e}_s)))$.
\end{enumerate}

Now suppose that $[\bm{i}] = [\bm{i}_-, s, r, \bm{i}_+]$. Multiplying equation (\ref{eqn: s,r expansion}) by $\bm{x}_{[\bm{i}_-]}$ on the left and $\bm{x}_{[\bm{i}_+]}$ on the right, we get generalised monomials of the forms
\begin{enumerate}[label=(\arabic*)]
\item $\bm{x}_{[\bm{i}_-]}x_tx_s\bm{x}_{[\bm{i}_+]}$,
\item $\bm{x}_{[\bm{i}_-]} \bm{x}^\gamma x_s \bm{x}_{[\bm{i}_+]}$,
\item $\bm{x}_{[\bm{i}_-]} \bm{x}^\gamma \bm{x}_{[\bm{i}_+]}$,
\end{enumerate}
and so we are left to show that
\begin{enumerate}[label=(\arabic*)] 
\item $\rho(\bm{i}_-) + \bm{e}_t + \bm{e}_s + \rho(\bm{i}_+) \in F^+(\rho(\bm{i}_-) + \bm{e}_r + \bm{e}_s + \rho(\bm{i}_+))$,
\item $\rho(\bm{i}_-) + \gamma + \bm{e}_s + \rho(\bm{i}_+) \in F^+(\rho(\bm{i}_-) + \bm{e}_r + \bm{e}_s + \rho(\bm{i}_+))$,
\item $\rho(\bm{i}_-) + \gamma + \rho(\bm{i}_+) \in F^+(\rho(\bm{i}_-) + \bm{e}_r + \bm{e}_s + \rho(\bm{i}_+))$,
\end{enumerate}
which follow easily from their corresponding numbers above and Lemma \ref{lem: F+ stable under addition}.
\end{proof}

\begin{lem}\label{lem: can reorder a single generalised monomial}
Let $[\bm{i}]$ be arbitrary and $\alpha = \rho(\bm{i})$. Then there exist constants $q\in \iota(k)^\times$ and $a_{[\bm{j}]}\in \iota(k)$ for all $[\bm{j}] \in F^+(\bm{i})$ such that
$$\bm{x}_{[\bm{i}]} - q\bm{x}^\alpha = \sum_{[\bm{j}] \in F^+(\bm{i})} a_{[\bm{j}]} \bm{x}_{[\bm{j}]}.$$
\end{lem}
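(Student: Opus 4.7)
The strategy is to reformulate the lemma via the unique standard-form expansion of Proposition~\ref{propn: uniqueness of representation}. Writing $\bm{x}_{[\bm{i}]} = \sum_\beta r_\beta \bm{x}^\beta$ with $r_\beta\in\iota(k)$, I will prove the sharper statement $(\star)$: $\supp(\bm{x}_{[\bm{i}]}) \subseteq F(\alpha)$, and $r_\alpha \in \iota(k)^\times$. The lemma then follows immediately: take $q := r_\alpha$, and for each $[\bm{j}]\in F^+(\bm{i})$ set $a_{[\bm{j}]} := r_{\rho(\bm{j})}$ if $[\bm{j}]$ is weakly increasing (so that $\bm{x}_{[\bm{j}]} = \bm{x}^{\rho(\bm{j})}$) and $a_{[\bm{j}]} := 0$ otherwise, noting that the reordering map $\rho$ bijects weakly-increasing elements of $F^+(\bm{i})$ onto $F^+(\alpha)$.

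To prove $(\star)$ I would fix an auxiliary cutoff $N$, establish $(\star)$ modulo $\mathfrak{m}^{N+1}$, and then take $N\to\infty$ using uniqueness of standard form in each $R/\mathfrak{m}^{N+1}$. Within the finite set $\{[\bm{i}] : |\bm{i}|\leq N\}$ the induction is well-founded in the order $[\bm{i}']\prec_N[\bm{i}]$ iff $|\bm{i}'|>|\bm{i}|$, or $|\bm{i}'|=|\bm{i}|$ and $[\bm{i}']$ is lex-less as a sequence. The base case (weakly increasing $[\bm{i}]$) is immediate; for the inductive step I would apply Lemma~\ref{lem: effect of single transpose} at the \emph{leftmost} inversion of $[\bm{i}]$ (values $s>r$ at positions $(j,j+1)$) to get
\[
\bm{x}_{[\bm{i}]} \equiv q\, \bm{x}_{[\bm{i}^\top]} + \sum_{[\bm{k}]\in F^+(\bm{i}),\ |\bm{k}|\leq N} a_{[\bm{k}]} \bm{x}_{[\bm{k}]} \pmod{\mathfrak{m}^{N+1}},
\]
discarding the type (2), (3) error terms with $|\bm{k}|>N$ (they lie in $\mathfrak{m}^{N+1}$ by Lemma~\ref{lem: standard form of powers of m}). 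Each remaining right-hand term is $\prec_N$-smaller than $[\bm{i}]$: the main term $[\bm{i}^\top]$ because the leftmost-inversion choice makes position $j$ drop from $s$ to $r$; the type (1) errors $[\bm{i}_-, t, s, \bm{i}_+]$ because position $j$ drops further to $t<r<s$; and types (2), (3) because they have strictly larger total degree.

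Two geometric facts then pin down $(\star)$. First, $F(F(\alpha))=F(\alpha)$ (Lemma~\ref{lem: ED = DE}) gives $F(\rho(\bm{k}))\subseteq F(\alpha)$ for each $[\bm{k}]\in F^+(\bm{i})$, so the inductive hypothesis keeps the combined support inside $F(\alpha)$. Second, antisymmetry of $\leq_F$ (from Lemma~\ref{lem: F is compatible with lex orders}) forces $\alpha\notin F(\rho(\bm{k}))$ for each $[\bm{k}]\in F^+(\bm{i})$, so only $q\bm{x}_{[\bm{i}^\top]}$ contributes to the coefficient of $\bm{x}^\alpha$, producing $r_\alpha = q s_\alpha \in \iota(k)^\times$ by the multiplicativity of $\iota$ (Remark~\ref{rk: choice of section}), where $s_\alpha$ is the unit leading coefficient supplied by the inductive hypothesis for $\bm{x}_{[\bm{i}^\top]}$.

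The main obstacle I anticipate is the bookkeeping in case~\textbf{(B)}: the raw combined coefficients $c_\beta := q s_\beta + \sum_{[\bm{k}]} a_{[\bm{k}]} t^{[\bm{k}]}_\beta$ (where $t^{[\bm{k}]}_\beta$ are the inductive standard-form coefficients of $\bm{x}_{[\bm{k}]}$) land in $A$ rather than $\iota(k)$, so one must finally re-expand each $c_\beta \bm{x}^\beta$ via Lemma~\ref{lem: standard form in A}, writing $c_\beta = \sum_\ell c_{\beta,\ell} x_1^\ell$ to obtain $c_\beta \bm{x}^\beta = \sum_\ell c_{\beta,\ell} \bm{x}^{\beta+\ell\bm{e}_1}$. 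Crucially, this does not escape $F(\alpha)$ since $F(\alpha)+\mathbb{N}\bm{e}_1\subseteq F(\alpha)$, and it does not alter the coefficient at $\bm{x}^\alpha$ since $\alpha-\ell\bm{e}_1\notin F(\alpha)$ for any $\ell>0$ (because $\preceq_{\mathrm{lex}}$ refines $\leq_F$ by Lemma~\ref{lem: F is compatible with lex orders}, and $\alpha-\ell\bm{e}_1\prec_{\mathrm{lex}}\alpha$). In case~\textbf{(A)}, $\iota(k)=k$ is closed under addition and this final massaging step is trivial.
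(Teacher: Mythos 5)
Your proposal is correct in its overall structure, but it takes a genuinely different route from the paper, and it is worth noting a small inaccuracy.

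The paper proves the lemma exactly as stated (with generalised monomials $\bm{x}_{[\bm{j}]}$ on the right-hand side) via a short bubble-sort argument: it picks an explicit finite sequence of adjacent transpositions $[\bm{i}] \to [\bm{i}^{\top_1}] \to \dots \to [\bm{i}^{\top_1\dots\top_e}]$ ending at the weakly-increasing tuple, applies Lemma~\ref{lem: effect of single transpose} to each step, multiplies equation $N$ by $q_1\cdots q_{N-1}$, and telescopes. It then goes on in Proposition~\ref{propn: multiplication is compatible with reordering} to upgrade this to a standard-monomial expansion by a separate transfinite-style induction over $\preceq_{\mathrm{deglex}}$. Your $(\star)$ \emph{is} Proposition~\ref{propn: multiplication is compatible with reordering}, so you are in effect proving the lemma and the following proposition simultaneously by a single well-founded induction (mod $\mathfrak{m}^{N+1}$, on the finite poset $\prec_N$), and then reading off the lemma's weaker conclusion by concentrating all the mass on the weakly-increasing tuples. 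That is a perfectly valid alternative, and arguably more transparent, though it gives a somewhat unnatural choice of coefficients $a_{[\bm{j}]}$ (zero on every non-weakly-increasing $[\bm{j}]$) compared with what the bubble-sort actually produces. Your case-\textbf{(B)} bookkeeping at the end is also a correct, and more explicit, treatment of a point the paper handles rather tersely (by citing Lemma~\ref{lem: support of sum} in a remark).

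One genuine inaccuracy: in justifying that the error terms are $\prec_N$-smaller, you say ``types (2), (3) because they have strictly larger total degree.'' For type (3) the error is $\bm{x}_{[\bm{i}_-]}\bm{x}^\gamma\bm{x}_{[\bm{i}_+]}$ with $|\gamma|\geq 2$, giving degree $|\bm{i}|-2+|\gamma|\geq|\bm{i}|$ --- not strictly larger when $|\gamma|=2$. The conclusion is still correct, because in that boundary case $\gamma\in\mathbb{N}^r\times\{0\}^{n-r}$ forces every entry of the weakly-increasing rewrite of $\gamma$ to be $\leq r<s$, so position $j$ of the new tuple drops below $s$ and the tuple is lex-less. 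You should replace the degree argument for type (3) with ``strictly larger degree, or equal degree and lex-less'' (exactly the same reason as types (1) and the main term). With that fix, the inductive step is watertight.
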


\begin{proof}
By the same argument that shows that the bubble-sort algorithm terminates, there exists a \emph{finite} sequence of transpositions $[\bm{i}^{\top_1}], [\bm{i}^{\top_1\top_2}], \dots, [\bm{i}^{\top_1\dots \top_e}]$ with $[\bm{i}^{\top_1 \dots \top_e}]$ weakly increasing, so that $\bm{x}_{[\bm{i}^{\top_1 \dots \top_e}]} = \bm{x}^\alpha$. Correspondingly, Lemma \ref{lem: effect of single transpose} gives us a finite list of sums
\refstepcounter{equation}\label{eqn: multiple transposes}
\begin{align}
\tag{\ref{eqn: multiple transposes}.1} \bm{x}_{[\bm{i}]} - q_1\bm{x}_{[\bm{i}^{\top_1}]} &= \sum a_{1,[\bm{j}]} \bm{x}_{[\bm{j}]}\\
\notag \tag{\ref{eqn: multiple transposes}.2} \bm{x}_{[\bm{i}^{\top_1}]} - q_2\bm{x}_{[\bm{i}^{\top_1 \top_2}]} &= \sum a_{2,[\bm{j}]} \bm{x}_{[\bm{j}]}\\
\notag\vdots\qquad&\qquad\vdots\\
\notag \tag{\ref{eqn: multiple transposes}.$e$} \bm{x}_{[\bm{i}^{\top_1 \dots \top_{e-1}}]} - q_e\bm{x}_{[\bm{i}^{\top_1 \dots \top_e}]} &= \sum a_{e,[\bm{j}]} \bm{x}_{[\bm{j}]}
\end{align}
where all sums run over $[\bm{j}]\in F^+(\bm{i})$. (Note that $F^+(\bm{i}) = F^+(\bm{i}^{\top_1}) = \dots$) Now, for each $1\leq N\leq e$, multiply equation (\ref{eqn: multiple transposes}.$N$) by $q_1\dots q_{N-1}$ and add the results to get
$$\bm{x}_{[\bm{i}]} - q'\bm{x}^\alpha = \sum_{[\bm{j}] \in F^+(\bm{i})} a'_{[\bm{j}]} \bm{x}_{[\bm{j}]},$$
where $q' = q_1q_2\dots q_e$.
\end{proof}

\begin{rk*}
In case \textbf{(A${}_\triangle$)}, we can explicitly calculate
$$a'_{[\bm{j}]} = a_{1,[\bm{j}]} + q_1 a_{2,[\bm{j}]} + q_1q_2 a_{3,[\bm{j}]} + \dots + q_1 q_2 \dots q_{e-1} a_{e,[\bm{j}]},$$
though notice that in case \textbf{(B${}_\triangle$)} we cannot: cf. Lemma \ref{lem: support of sum} and Proposition \ref{propn: LM is filtered}.
\end{rk*}

\begin{propn}\label{propn: multiplication is compatible with reordering}
Let $[\bm{i}]$ be arbitrary and $\alpha = \rho(\bm{i})$. Then there exist constants $q\in \iota(k)^\times$ and $a_\beta \in \iota(k)$ for all $\alpha <_F \beta$ such that
$$\bm{x}_{[\bm{i}]} - q\bm{x}^\alpha = \sum_{\alpha <_F \beta} a_{\beta} \bm{x}^\beta.$$
\end{propn}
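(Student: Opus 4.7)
The plan is to show that the unique standard-form expansion $\bm{x}_{[\bm{i}]} = \sum_\beta c_\beta \bm{x}^\beta$ guaranteed by Proposition \ref{propn: uniqueness of representation} has support contained in $F(\alpha)$ (where $\alpha = \rho(\bm{i})$) and satisfies $c_\alpha \in \iota(k)^\times$; by the uniqueness of standard form, this is equivalent to the claim. The natural starting point is Lemma \ref{lem: can reorder a single generalised monomial}, which gives $\bm{x}_{[\bm{i}]} - q\bm{x}^\alpha = \sum_{[\bm{j}] \in F^+(\bm{i})} a_{[\bm{j}]} \bm{x}_{[\bm{j}]}$, but the error terms are non-standard generalised monomials. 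Naively iterating on each $\bm{x}_{[\bm{j}]}$ produces further non-standard terms whose reorderings, while strictly $<_F$-greater, need not have larger total degree (inspect type (1) in the proof of Lemma \ref{lem: effect of single transpose}), so the iteration does not converge $\mathfrak{m}$-adically on its own.

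To circumvent this, I would work modulo $\mathfrak{m}^{d+1}$ for each fixed $d$ and pass to the limit. Inside the finite set $\{\beta \in \mathbb{N}^n : |\beta| \leq d\}$, fix any triangular additive total order $\preceq$ (for instance the lexicographic order) and perform reverse induction on $\rho(\bm{i})$ with respect to $\preceq$. The base case is when $\rho(\bm{i})$ is the $\preceq$-maximum of this finite set: then $F^+(\rho(\bm{i})) \cap \{|\beta| \leq d\} = \varnothing$ (since $\preceq$ refines $\leq_F$), so every error term in Lemma \ref{lem: can reorder a single generalised monomial} has total degree exceeding $d$ and therefore lies in $\mathfrak{m}^{d+1}$. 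For the inductive step, apply Lemma \ref{lem: can reorder a single generalised monomial} to $\bm{x}_{[\bm{i}]}$, discard error terms with $|\bm{j}| > d$, and note for each remaining $[\bm{j}]$ that $\rho(\bm{j}) \in F^+(\rho(\bm{i}))$ together with antisymmetry of $\leq_F$ forces $\rho(\bm{j}) \succ \rho(\bm{i})$; the reverse inductive hypothesis then produces the claimed standard-form expression for $\bm{x}_{[\bm{j}]}$ modulo $\mathfrak{m}^{d+1}$.

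The hardest step, and the one most sensitive to case \textbf{(B)}, is repackaging the resulting substituted sum into standard form. Putting the inductive expressions back in yields a combination of the shape $\sum d_\gamma \bm{x}^\gamma$ with $\gamma \in F^+(\rho(\bm{i}))$ and $d_\gamma \in A$, but not necessarily in $\iota(k)$, because several $[\bm{j}]$-contributions can land on the same $\gamma$ and $\iota(k)$ is not additively closed in case \textbf{(B)}. I would resolve this by rewriting each $d_\gamma$ via its $x_1$-adic expansion in $\iota(k)$ and appealing to Lemma \ref{lem: support of sum} to bound the support of the total sum in standard form: the only enlargement of support is along $\mathbb{N}\bm{e}_1$, and a short check using antisymmetry of $\leq_F$ and the fact that $\leq_F$ refines $\leq_\div$ shows $F^+(\rho(\bm{i})) + \mathbb{N}\bm{e}_1 \subseteq F^+(\rho(\bm{i}))$, so the support stays inside $F^+(\rho(\bm{i}))$. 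Once the claim holds modulo $\mathfrak{m}^{d+1}$ for every $d$, the uniqueness part of Proposition \ref{propn: uniqueness of representation} forces the coefficients $q$ and $a_\beta$ to be consistent across different $d$, and completeness of $R$ assembles them into the asserted convergent expression.
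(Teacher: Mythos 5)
Your proof is correct, and your opening diagnosis --- that naively iterating Lemma \ref{lem: can reorder a single generalised monomial} fails to converge $\mathfrak{m}$-adically because the shuffle error terms of type (1) preserve total degree --- is exactly the subtlety the paper is also working around. The two arguments are close in spirit but organised differently: the paper indexes the induction \emph{forwards} by the well-order $\preceq_{\mathrm{deglex}}$ on $F(\rho(\bm{i}))$ (chosen precisely because it is order-isomorphic to $\omega$ and refines the degree filtration), maintaining at stage $s$ a partially standardised expression $\bm{x}_{[\bm{i}]} = q_0\bm{x}^{\alpha_0} + \dots + q_{s-1}\bm{x}^{\alpha_{s-1}} + (\text{tail supported at } \succeq \alpha_s)$, and then observes that the tails tend to zero because total degree eventually escapes every bound; you instead fix a degree cutoff $d$, run a \emph{reverse} induction through the finite poset $\{|\beta| \le d\}$ under any triangular total order, and pass to the limit over $d$ using uniqueness of standard forms. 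The two approaches need identical auxiliary facts (Lemma \ref{lem: can reorder a single generalised monomial}, idempotence and additivity of $F$, and the closure $F(\alpha) + \mathbb{N}\bm{e}_1 \subseteq F(\alpha)$ that makes the case-\textbf{(B)} re-standardisation harmless); the paper's version is slightly more economical because the deglex order lets it handle all degrees in a single ascending pass, while yours is arguably more transparent about why convergence holds and has the modest advantage of not depending on a particular choice of triangular order. Two small points worth tightening in a final write-up: (a) you should say a word about why the coefficient of $\bm{x}^\alpha$ in the repackaged standard form lies in $\iota(k)^\times$ --- the point being that no $\gamma + m\bm{e}_1$ with $\gamma \in F^+(\alpha)$ can equal $\alpha$ (antisymmetry again), so the only contribution at $\alpha$ is the unit $q$; and (b) the base case should also explicitly note that any $[\bm{i}]$ with $|\bm{i}| > d$ is trivial mod $\mathfrak{m}^{d+1}$, so the induction only needs to cover tuples of degree at most $d$.
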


\begin{proof}
We will proceed by induction on the indexing set $(F(\bm{i}), \preceq)$, where in this proof it will be convenient to take $\preceq \; = \; \preceq_{\mathrm{deglex}}$, as this is a refinement of both $\leq_F$ and the natural $\mathfrak{m}$-adic degree filtration. Then $\preceq$ is order-isomorphic to $(\mathbb{N}, \leq)$, so we can write $F(\bm{i}) = \{\alpha_0, \alpha_1, \alpha_2, \dots\}$ where $\alpha_0 \prec \alpha_1 \prec \dots$ and $\alpha_0 = \alpha$.

Suppose that we have an expression
$$\displaystyle \bm{x}_{[\bm{i}]} = q_0\bm{x}^{\alpha_0} + q_1\bm{x}^{\alpha_1} + \dots + q_{s-1}\bm{x}^{\alpha_{s-1}} + \sum_{\substack{[\bm{j}] \in F(\bm{i}) \\ \alpha_s \preceq \rho(\bm{j})}} a^{(s-1)}_{[\bm{j}]} \bm{x}_{[\bm{j}]},$$
where all $a^{(s-1)}_{[\bm{j}]}\in \iota(k)$. (Lemma \ref{lem: can reorder a single generalised monomial} tells us that such an expression exists for $s = 1$.) For ease of notation, write $a^{(s-1)}_{[\bm{j}]} = a'_{[\bm{j}]}$.

There are finitely many $[\bm{j}]$ satisfying $\rho(\bm{j}) = \alpha_s$, say $[\bm{j}_1], \dots, [\bm{j}_t]$, so we can separate them out:
\begin{align}\label{eqn: separate out alpha_s terms}
\displaystyle \bm{x}_{[\bm{i}]} = q_0\bm{x}^{\alpha_0} + \dots + q_{s-1}\bm{x}^{\alpha_{s-1}} + \left( a'_{[\bm{j}_1]} \bm{x}_{[\bm{j}_1]} + \dots + a'_{[\bm{j}_t]} \bm{x}_{[\bm{j}_t]}\right) + \sum_{\substack{[\bm{j}] \in F(\bm{i}) \\ \alpha_{s+1} \preceq \rho(\bm{j})}} a'_{[\bm{j}]} \bm{x}_{[\bm{j}]}.
\end{align}
Applying Lemma \ref{lem: can reorder a single generalised monomial} to each of the terms $a'_{[\bm{j}_u]} \bm{x}_{[\bm{j}_u]}$ for $1\leq u\leq t$ gives
\refstepcounter{equation}\label{eqn: reorder each generalised term separately}
\begin{align}
\notag \tag{\ref{eqn: reorder each generalised term separately}.$u$}
a'_{[\bm{j}_u]} \bm{x}_{[\bm{j}_u]} = q_{s,u} \bm{x}^{\alpha_s} + \sum_{\substack{[\bm{j}] \in F(\bm{i}) \\ \alpha_{s+1} \preceq \rho(\bm{j})}} b_{[\bm{j}],u} \bm{x}_{[\bm{j}]}.
\end{align}
Now substitute equations (\ref{eqn: reorder each generalised term separately}.1), $\dots$, (\ref{eqn: reorder each generalised term separately}.$t$) into equation (\ref{eqn: separate out alpha_s terms}) to get
\begin{align*}
\displaystyle \bm{x}_{[\bm{i}]} = q_0\bm{x}^{\alpha_0} + \dots + q_{s-1}\bm{x}^{\alpha_{s-1}} + q'_s \bm{x}^{\alpha_s} + \sum_{\substack{[\bm{j}] \in F(\bm{i}) \\ \alpha_{s+1} \preceq \rho(\bm{j})}} c'_{[\bm{j}]} \bm{x}_{[\bm{j}]},
\end{align*}
and (if we are in case \textbf{(B)}) put the constants $q'_s\in A$ and $c'_{[\bm{j}]}\in A$ into standard form, as in Definition \ref{defn: standard form in A}, to get 
\begin{align*}
\displaystyle \bm{x}_{[\bm{i}]} = q_0\bm{x}^{\alpha_0} + \dots + q_{s-1}\bm{x}^{\alpha_{s-1}} + q_s \bm{x}^{\alpha_s} + \sum_{\substack{[\bm{j}] \in F(\bm{i}) \\ \alpha_{s+1} \preceq \rho(\bm{j})}} a^{(s)}_{[\bm{j}]} \bm{x}_{[\bm{j}]}
\end{align*}
where $q_s, a^{(s)}_{[\bm{j}]} \in \iota(k)$.

Hence, by induction, we can reorder the first $s$ monomials for arbitrarily large $s$. But as $s$ increases, the total degree of $[\bm{j}]$ increases, and so the sequence of error terms $\left(\sum a^{(s)}_{[\bm{j}]} \bm{x}_{[\bm{j}]}\right)_{s\in\mathbb{N}}$ converges to zero by Lemma \ref{lem: standard form of powers of m}.
\end{proof}

\textit{Proof of Theorem \ref{thm: LM compatibility}.} Both $\bm{x}^\alpha$ and $\bm{x}^\beta$ are standard monomials, and hence can be written as $\bm{x}_{[\bm{i}]}$ and $\bm{x}_{[\bm{j}]}$ respectively, where $[\bm{i}]$ and $[\bm{j}]$ are weakly increasing and such that $\rho(\bm{i}) = \alpha$ and $\rho(\bm{j}) = \beta$: then the product $\bm{x}^\alpha \bm{x}^\beta$ is equal to $\bm{x}_{[\bm{i},\bm{j}]}$. Proposition \ref{propn: multiplication is compatible with reordering} now implies that there exist $q\in \iota(k)^\times$ and $a_\gamma\in \iota(k)$ such that
$$\bm{x}_{[\bm{i},\bm{j}]} - q\bm{x}^{\alpha+\beta} = \sum_{\alpha+\beta <_F \gamma} a_\gamma \bm{x}^\gamma.$$
In particular, $\alpha+\beta\in \supp(\bm{x}^\alpha \bm{x}^\beta)$, and for every $\gamma \in \supp(\bm{x}^\alpha \bm{x}^\beta) \setminus \{\alpha+\beta\}$, we have $\alpha+\beta <_F \gamma$, hence $\alpha+\beta \prec \gamma$ by Definition \ref{defn: triangular order}. This is what we wanted to show.\qed

We will not use the following corollary in any serious way, but we note for interest:

\begin{cor}\label{cor: associated graded is q-commutative}
Fix a triangular monomial ordering $\preceq$ on $R = A[[\bm{x}; \bm{\sigma}, \bm{\delta}]]^n_\triangle$. Then the function $\LM = \LM_\preceq: R\to \mathbb{N}^n \cup \{\infty\}$ is a rank-$n$ ring filtration, and the associated graded ring is a $q$-commutative polynomial ring (in other words, a multiparameter quantum affine space) over $k$ in $n$ variables.
\end{cor}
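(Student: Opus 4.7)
The plan is to read off the associated graded ring directly from the structure of $\LM$. First, I verify that $\LM$ is a rank-$n$ ring filtration on $R$. The additive inequality $\LM(r+s) \succeq \min\{\LM(r), \LM(s)\}$ is Proposition \ref{propn: LM is filtered}, and Lemma \ref{lem: LTs and LCs}(i) yields the stronger multiplicative identity $\LM(fg) = \LM(f) + \LM(g)$; here the non-vanishing of the leading coefficient of $fg$ uses Theorem \ref{thm: LM compatibility} (to ensure $\LC(\bm{x}^\varphi\bm{x}^\gamma) \in \iota(k)^\times$) together with the multiplicativity of $\iota$. Setting $F_\alpha R = \{r\in R : \LM(r) \succeq \alpha\}\cup\{0\}$ and $F_{\succ\alpha}R$ analogously gives a filtration indexed by the totally ordered monoid $(\mathbb{N}^n, \preceq)$, and I define $\gr_\alpha R := F_\alpha R / F_{\succ\alpha} R$.

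Next, I compute each homogeneous component. If $r\in F_\alpha R$ has $\LM(r) = \alpha$, then its standard form reads $r = \LC(r)\bm{x}^\alpha + (\text{higher-}\LM\text{ terms})$, so the class $r + F_{\succ\alpha}R$ equals $\LC(r)\cdot [\bm{x}^\alpha]$. Thus the $k$-linear map $k \to \gr_\alpha R$, $c\mapsto \iota(c)[\bm{x}^\alpha]$, is a bijection, giving $\gr_\alpha R \cong k$, generated by the nonzero class $[\bm{x}^\alpha]$.

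Finally, I identify the multiplicative structure. Set $X_i := [x_i] \in \gr_{\bm{e}_i}R$. Iterating Lemma \ref{lem: LTs and LCs}(i) and using that $\bm{x}^\alpha = x_1^{\alpha_1}\cdots x_n^{\alpha_n}$ is already in standard form (so each intermediate leading coefficient is $1$), one obtains
\[
X_1^{\alpha_1} X_2^{\alpha_2} \cdots X_n^{\alpha_n} = [\bm{x}^\alpha]
\]
for every $\alpha \in \mathbb{N}^n$. For $1\leq i < j \leq n$, Proposition \ref{propn: sigma and delta of x_r} gives $x_j x_i = q_{ij} x_i x_j + (\text{higher-}\LM\text{ terms})$ for some $q_{ij} \in \iota(k)^\times$, and taking classes in $\gr R$ yields the $q$-commutation relation $X_j X_i = q_{ij} X_i X_j$ (with $q_{ij}$ now viewed in $k^\times$). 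Since the $X_i$ then span every homogeneous piece $\gr_\alpha R = k\cdot X_1^{\alpha_1}\cdots X_n^{\alpha_n}$ as a one-dimensional $k$-space, any relation in $\gr R$ would have to be homogeneous and hence trivial; the $q$-commutation relations therefore present $\gr R$ as the quantum affine $n$-space $k_{\bm{q}}[X_1, \dots, X_n]$ over $k$. In case \textbf{(B)}, $A$-linearity of each $(\sigma_i,\delta_i)$ forces $\sigma_i(\pi) = \pi$ and $\delta_i(\pi) = 0$, so $q_{1j} = 1$ for all $j$ and $X_1 = [\pi]$ is central, as expected.

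No serious obstacle arises: the corollary is essentially a bookkeeping exercise that repackages Theorem \ref{thm: LM compatibility} and Proposition \ref{propn: sigma and delta of x_r}. The only subtlety worth watching is the use of multiplicativity of $\iota$ to identify the coefficient set $\iota(k)$ with the residue field $k$ when defining the scalar action on each $\gr_\alpha R$.
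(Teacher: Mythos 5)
Your proof is correct and takes essentially the same route as the paper's (very terse) proof, just spelled out in full: the paper cites Proposition~\ref{propn: multiplication is compatible with reordering} to "read off" the graded ring, while you trace the same content back one step to Proposition~\ref{propn: sigma and delta of x_r} for the elementary $q$-commutation and Lemma~\ref{lem: LTs and LCs}(i) for the multiplicative structure; the claim that the correction terms in $x_jx_i - q_{ij}x_ix_j$ are strictly $\preceq$-higher is, as you implicitly use, exactly the content of the $F^+$-support computation underlying Theorem~\ref{thm: LM compatibility}. The only point worth flagging is that this last step does genuinely require the triangularity of $\preceq$ (the correction terms land in $F^+(\bm{e}_i+\bm{e}_j)$, which is $\succ$-greater only for triangular orders), so it is cleanest to cite Proposition~\ref{propn: multiplication is compatible with reordering} directly rather than Proposition~\ref{propn: sigma and delta of x_r} alone.
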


\begin{proof}
The fact that $\LM$ is a filtration is just a rephrasing of Theorem \ref{thm: LM compatibility} and Proposition \ref{propn: LM is filtered}. The associated graded ring can be read off from Proposition \ref{propn: multiplication is compatible with reordering}.
\end{proof}

\section{Left ideals and ideal bases}

Let $R = A[[\bm{x}; \bm{\sigma}, \bm{\delta}]]^n$, and fix a monomial order $\preceq$ on $R$ throughout this section.

\begin{notn}
If $S$ is a subset of $R$, we will write $RS$ for the left ideal generated by $S$, and $SR$ for the right ideal. We will also write $\LM(S) = \{\LM(s) : s\in S\setminus \{0\}\} \subseteq \mathbb{N}^n$. Similarly, if $S$ is a tuple of elements of $R$, we will often abuse notation and treat $S$ as a set and write $RS$, $SR$ and $\LM(S)$ for the same objects.

If $T\subseteq \mathbb{N}^n$, we will write $\langle T\rangle = \bigcup_{t\in T} (t + \mathbb{N}^n)$.
\end{notn}

\begin{defn}\label{defn: Groebner basis}
Suppose $I$ is a left ideal of $R$. A \emph{left ideal basis} for $I$ is a finite ordered tuple $G = (g_1, \dots, g_s)$ of nonzero elements $g_i\in I$ such that $I = Rg_1 + \dots + Rg_s$. Note that this implies $\langle \LM(G) \rangle \subseteq \LM(I)$. A left ideal basis $G$ for $I$ is a \emph{Gr\"obner basis} if $\LM(I) = \langle \LM(G)\rangle$.
\end{defn}

\begin{rks*}
$ $

\begin{enumerate}[label=(\roman*)]
\item By Theorem \ref{thm: LM compatibility}, if $I$ is a left ideal of $R$, then $\LM(I) = \langle \LM(I)\rangle$. As in the commutative theory, by Dickson's lemma, there exists a finite subset $T\subseteq \LM(I)$ such that $\LM(I) = \langle T\rangle$. In particular, Gr\"obner bases always exist: if $\LM(I) = \langle T\rangle$, then for any $\alpha\in T$ there exists $r_\alpha \in I$ such that $\LM(r_\alpha) = \alpha$, and the set $G = \{r_\alpha : \alpha\in T\}\subseteq I$ satisfies $\LM(G) = T$ as required.
\item Also as in the commutative theory, it will turn out that remainders on right-division by $G$ are unique regardless of the ordering of the elements of $G$. However, in this paper, it will be convenient to insist throughout that $G$ remain ordered.
\end{enumerate}
\end{rks*}

\subsection{The division algorithm and left ideal membership}

In the following theorem, we will use the following notation. If $F = (f_1, \dots, f_s)$ is an ordered tuple of nonzero elements of $R$, define the following subsets of $\mathbb{N}^n$:
$$\Delta_{F,r} = \left(\LM(f_r) + \mathbb{N}^n\right) \setminus \left( \bigcup_{i=1}^{r-1} \Delta_{F,i}\right) \;\;\; (1\leq r\leq s), \qquad \qquad \overline{\Delta}_F = \mathbb{N}^n \setminus \left(\bigcup_{i=1}^{s} \Delta_{F,i}\right).$$

\begin{thm}[Multivariate right-division]\label{thm: division theorem}
Let $F = (f_1, \dots, f_s)$ be an ordered tuple of nonzero elements of $R$. Then every $f\in R$ can be written uniquely as $f = q_1 f_1 + \dots + q_s f_s + r$, where $q_1, \dots, q_s, r\in R$ are elements satisfying
$$\begin{cases}
\supp(q_i) + \LM(f_i) \subseteq \Delta_{F,i} \text{ for all } 1\leq i\leq s, \text{ and}\\\supp(r) \subseteq \overline{\Delta}_F.\end{cases}$$
Moreover, $\LM(f) \preceq \LM(q_i f_i)$ for all $1\leq i\leq s$.
\end{thm}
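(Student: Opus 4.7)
The plan is to run the classical multivariate division algorithm transfinitely over the well-ordering $(\mathbb{N}^n, \preceq)$, using the machinery of Sections \ref{subsec: topology} and \ref{subsec: support calculations} to handle limit stages. I would construct triples $(q_i^{(\mu)}, r^{(\mu)}, f^{(\mu)})$ indexed by ordinals $\mu$, starting from $f^{(0)} = f$, $q_i^{(0)} = 0$, $r^{(0)} = 0$, and preserving throughout the invariant
\[ f = q_1^{(\mu)} f_1 + \cdots + q_s^{(\mu)} f_s + r^{(\mu)} + f^{(\mu)}, \]
together with the support conditions on the $q_i^{(\mu)}, r^{(\mu)}$ and the bound $\LM(q_i^{(\mu)} f_i) \succeq \LM(f)$.

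At a successor step, let $\alpha = \LM(f^{(\mu)})$ and $c = \LC(f^{(\mu)}) \in \iota(k)^\times$. If $\alpha \in \Delta_{F,i}$ for the least eligible $i$, set $\beta = \alpha - \LM(f_i)$; Lemma \ref{lem: LTs and LCs}(i) gives $\LT(\bm{x}^\beta f_i) = \LC(f_i)\LC(\bm{x}^\beta \bm{x}^{\LM(f_i)})\bm{x}^\alpha$, and the leading coefficient appearing here is a unit in the multiplicative group $\iota(k)^\times$ (using Proposition \ref{propn: multiplication is compatible with reordering} and multiplicativity of $\iota$), so there is a unique $c' \in \iota(k)$ with $\LT(c'\bm{x}^\beta f_i) = \LT(f^{(\mu)})$; set $q_i^{(\mu+1)} = q_i^{(\mu)} + c'\bm{x}^\beta$ and $f^{(\mu+1)} = f^{(\mu)} - c'\bm{x}^\beta f_i$. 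If instead $\alpha \in \overline{\Delta}_F$, absorb $\LT(f^{(\mu)})$ into $r$. In either case the leading terms cancel exactly in $\iota(k)$, so Proposition \ref{propn: LM is filtered}(i) applied to the two remainders gives $\LM(f^{(\mu+1)}) \succ \alpha$. At a limit $\lambda$, the monomials added to a given $q_i$ sit at distinct exponents $\beta_\mu$ whose shifts $\beta_\mu + \LM(f_i) = \alpha_\mu$ are strictly $\preceq$-increasing in the countable set $\mathbb{N}^n$; only finitely many of them can appear at each bounded total degree, so by Proposition \ref{propn: uniqueness of representation} the formal sum defines an element $q_i^{(\lambda)} \in R$ (likewise for $r^{(\lambda)}$), and setting $f^{(\lambda)} = f - \sum q_i^{(\lambda)} f_i - r^{(\lambda)}$ preserves the invariant once one applies Theorem \ref{thm: LM compatibility} and Proposition \ref{propn: LM is filtered} to the tail differences $(q_i^{(\lambda)} - q_i^{(\mu)})f_i$. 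Because the successor steps produce a strictly $\preceq$-increasing transfinite sequence inside the countable set $\mathbb{N}^n$, the algorithm must terminate with $f^{(\mu)} = 0$ at some countable ordinal, at which point $f = \sum q_i f_i + r$; and since every $\bm{x}^{\beta}$ ever added to $q_i$ satisfies $\beta + \LM(f_i) \succeq \LM(f)$, Theorem \ref{thm: LM compatibility} yields $\LM(f) \preceq \LM(q_i) + \LM(f_i) = \LM(q_i f_i)$.

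For uniqueness, given two such decompositions form $h := \sum(q_i - q_i')f_i + (r - r') = 0$ and suppose some difference is nonzero. Let $\gamma$ be the $\preceq$-minimum over nonzero terms of $\LM((q_i - q_i')f_i)$ and $\LM(r - r')$; using Lemma \ref{lem: support of sum} to control how the $\pi$-adic carrying in case \textbf{(B)} propagates through differences of elements with support in $\Delta_{F,i}$, the $\bm{x}^\gamma$-contribution of whichever term achieves this minimum lies in a single cell of the disjoint partition $\mathbb{N}^n = \Delta_{F,1} \sqcup \cdots \sqcup \Delta_{F,s} \sqcup \overline{\Delta}_F$ and cannot be cancelled by a contribution from another cell, forcing $\LM(h) = \gamma$ and contradicting $h = 0$. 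The main obstacle throughout is case \textbf{(B)}: because $\iota(k)$ is not closed under subtraction (Example \ref{ex: support is badly behaved under addition}), naive coefficient-by-coefficient bookkeeping in standard form breaks down, so one must manipulate whole elements of $R$ and lean on Proposition \ref{propn: LM is filtered} together with Lemma \ref{lem: checking equality on support} to translate the stagewise vanishing of ``leading standard-form coefficients'' into the genuine strict increase of $\LM$ in $R$ needed to drive the algorithm to termination.
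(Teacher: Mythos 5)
Your proposal follows the same high-level strategy as the paper's proof: transfinite recursion with reduction at successor stages and a convergence argument at limit stages. The bookkeeping differs — you index by abstract ordinals $\mu$, performing a reduction (or absorption into $r$) at every successor step, whereas the paper indexes the recursion by $(\mathbb{N}^n,\preceq)$ itself and has an extra "do nothing" case; these are equivalent. Your handling of limit stages is, if anything, cleaner than the paper's: you observe that the exponents $\beta_\mu$ being added to any fixed $q_i$ are distinct and have total degree tending to infinity (since any strictly $\preceq$-increasing sequence in $\mathbb{N}^n$ does), so the formal sum over $\mu < \lambda$ is automatically a well-defined element of $R$ by Proposition \ref{propn: uniqueness of representation}. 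The paper instead invokes Proposition \ref{propn: convergent subsequences in lex ordering} to extract a convergent subsequence, which makes essential use of the finiteness of $k$; your argument sidesteps that compactness input entirely, since the full sequence is already Cauchy.

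However, there is a gap in your uniqueness argument in case \textbf{(B)}. You claim that the $\bm{x}^\gamma$-contribution of the term achieving the $\preceq$-minimum lies in a single cell and cannot be cancelled, citing Lemma \ref{lem: support of sum} to "control the carrying." That lemma only gives $\supp(q_i - q_i') \subseteq \bigl(\supp(q_i)\cup\supp(q_i')\bigr) + (\mathbb{N}\times\{0\}^{n-1})$, so after multiplying by $f_i$ the support of $(q_i - q_i')f_i$ may spill out of $\Delta_{F,i}$ into $\Delta_{F,j}$ for $j < i$ (because $\Delta_{F,i} + (\mathbb{N}\times\{0\}^{n-1}) \subseteq \bigcup_{j\le i}\Delta_{F,j}$, not $\subseteq \Delta_{F,i}$). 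In particular, two different terms $(q_1-q_1')f_1$ and $(q_2-q_2')f_2$ could both have least monomial equal to the same $\gamma\in\Delta_{F,1}$ and cancel there, so the "single cell, no cancellation" conclusion is not justified by the lemma you cite. What you actually need is Lemma \ref{lem: checking equality on support} applied to each pair $(q_i,q_i')$ and to $(r,r')$: if $q_i \ne q_i'$ then $\LM(q_i - q_i') \in \supp(q_i)\cup\supp(q_i') \subseteq \Delta_{F,i}-\LM(f_i)$, hence by Theorem \ref{thm: LM compatibility} $\LM\bigl((q_i-q_i')f_i\bigr)\in\Delta_{F,i}$ exactly, and likewise $\LM(r-r')\in\overline{\Delta}_F$. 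With this, the least monomials of the nonzero summands lie in disjoint cells, so they are pairwise distinct and the minimality argument via Proposition \ref{propn: LM is filtered}(ii) goes through. (The paper instead uses a telescoping argument, applying Lemma \ref{lem: checking equality on support} once to peel off $r$, then $q_s$, and so on; both routes work once the correct lemma is in hand.)
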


\begin{proof}
We perform transfinite induction on $(\mathbb{N}^n, \preceq)$: in particular, for every $\alpha\in\mathbb{N}^n$, we will define elements $g(\alpha), q_1(\alpha), \dots, q_s(\alpha), r(\alpha)\in R$ satisfying
\begin{enumerate}[label=(\arabic*${}_\alpha$)]
\item $f = g(\alpha) + q_1(\alpha)f_1 + \dots + q_s(\alpha)f_s + r(\alpha)$,
\item $\alpha\preceq\LM(g(\alpha))$, and
\item for all $\beta\prec\alpha$, $\begin{cases}
\beta\preceq \LM(q_i(\alpha) - q_i(\beta)) + \LM(f_i) \text{ for all } 1\leq i\leq s,\\
\beta\preceq \LM(r(\alpha) - r(\beta)),\end{cases}$
\item $\begin{cases}
\supp(q_i(\alpha)) + \LM(f_i) \subseteq \Delta_{F,i} \text{ for all } 1\leq i\leq s,\\
\supp(r(\alpha)) \subseteq \overline{\Delta}_F.\end{cases}$
\end{enumerate}

We begin with $\alpha = \bm{0} \in\mathbb{N}^n$ by setting $q_1(\bm{0}) = \dots = q_s(\bm{0}) = r(\bm{0}) = 0$ and $g(\bm{0}) = f$.

Now we proceed as follows. In all cases, we assume from the start that criteria (1${}_\varepsilon$--4${}_\varepsilon$) are satisfied for all $\varepsilon\prec \alpha$, and we leave it to the reader to check that criteria (1${}_\alpha$--4${}_\alpha$) are satisfied at the end.

\textbf{Case 1:} $\alpha$ is the successor of $\beta$, $\LM(g(\beta)) = \beta$, and some $\LM(f_j) \leq_\div \beta$.

As $\LM(g(\beta)) = \beta$, there exists some $t\in \iota(k)^\times$ such that $g(\beta) = t\bm{x}^\beta + \dots$ (where dots denote $\preceq$-greater terms). Let $1\leq i\leq s$ be minimal with the property that $\LM(f_i) \leq_\div \beta$, and choose $\gamma = \beta - \LM(f_i) \in\mathbb{N}^n$, so that $\LM(\bm{x}^\gamma f_i) = \beta$ by Theorem \ref{thm: LM compatibility}, say $\bm{x}^\gamma f_i = u \bm{x}^\beta + \dots$ for some $u\in\iota(k)^\times$.

In this case, set $q_i(\alpha) = q_i(\beta) + tu^{-1}\bm{x}^\gamma$ and $g(\alpha) = g(\beta) - tu^{-1}\bm{x}^\gamma f_i$. All other elements remain unchanged, i.e. $q_j(\alpha) = q_j(\beta)$ for all $1\leq j\leq s$ with $j\neq i$, and $r(\alpha) = r(\beta)$.

\textbf{Case 2:} $\alpha$ is the successor of $\beta$, and $\LM(g(\beta)) = \beta$, but no $\LM(f_j) \leq_\div \beta$. In this case, suppose again that $g(\beta) = t\bm{x}^\beta + \dots$, and set $g(\alpha) = g(\beta) - t\bm{x}^\beta$ and $r(\alpha) = r(\beta) + t\bm{x}^\beta$. Leave all other elements unchanged, i.e. $q_j(\alpha) = q_j(\beta)$ for all $1\leq j\leq s$.

\textbf{Case 3:} $\alpha$ is the successor of $\beta$, but $\LM(g(\beta)) \neq \beta$. In this case, leave all elements unchanged, i.e. set $g(\alpha) = g(\beta)$, $r(\alpha) = r(\beta)$ and $q_j(\alpha) = q_j(\beta)$ for all $1\leq j\leq s$.

\textbf{Case 4:} $\alpha$ is a limit ordinal. In this case, let $T = \{\gamma\in \mathbb{N}^n : \gamma \prec \alpha\}$. By repeated application of Proposition \ref{propn: convergent subsequences in lex ordering}, we can find an infinite subset $T'\subseteq T$ such that $(g_\beta)_{\beta\in T'}$, $(q_j(\beta))_{\beta\in T'}$ and $(r_\beta)_{\beta\in T'}$ all converge in $R$, and $\sup(T') = \alpha$. Define  $g(\alpha)$, $q_j(\alpha)$ and $r(\alpha)$ to be the limits of these subsequences.

After this inductive process, we will have defined elements $g(\alpha), q_1(\alpha), \dots, q_s(\alpha), r(\alpha)\in R$ satisfying criteria (1${}_\alpha$--4${}_\alpha$) for all $\alpha\in\mathbb{N}^n$.

At this stage, with a final application of the argument in Case 4, but with $T = \mathbb{N}^n$ (and so ``$\sup(T) = \infty$"), we can define $g, q_1, \dots, q_s, r$ to be equal to the respective limits of appropriate subsequences of $g(\alpha)$, etc. This gives the expression $f = g + q_1 f_1 + \dots + q_s f_s$: now we need only notice that the conjunction of the criteria (2${}_\alpha$) for all $\alpha\in\mathbb{N}^n$ implies that $g = 0$, and that the construction of the $q_i$ implies that $\LM(f) \preceq \LM(q_i f_i)$ for all $i$.

Finally, we show that this expression is unique. Suppose that we have two such expressions
$$f = q_1 f_1 + \dots + q_s f_s + r = q'_1 f_1 + \dots + q'_s f_s + r',$$
where $\supp(q_i) + \LM(f_i) \subseteq \Delta_{F,i}$ and $\supp(q'_i) + \LM(f_i) \subseteq \Delta_{F,i}$ for all $1\leq i\leq s$, and $\supp(r) \subseteq \overline{\Delta}_F$ and $\supp(r') \subseteq \overline{\Delta}_F$. This implies that
$$(q_1 - q'_1)f_1 + \dots + (q_s - q'_s)f_s + (r - r') = 0.$$
In case \textbf{(A)}, we can now simply compare the coefficients of each monomial.

In case \textbf{(B)}, note that $\bigcup_{i=1}^t \Delta_{F,i} + (\mathbb{N}\times \{0\}^{n-1}) = \bigcup_{i=1}^t \Delta_{F,i}$ for all $1\leq t\leq s$, and so Lemma \ref{lem: support of sum of monomials} tells us that $\supp(q_i - q'_i) + \LM(f_i) \subseteq \bigcup_{i=1}^t \Delta_{F,i}$ for all $1\leq i\leq t$.

So, if $(q_1-q'_1)f_1 + \dots + (q_s-q'_s)f_s$ is nonzero, its least monomial lies in $\bigcup_{i=1}^s \Delta_{F,i}$, and hence $\LM(r-r') \in \bigcup_{i=1}^s \Delta_{F,i}$. But $\bigcup_{i=1}^s \Delta_{F,i} = \mathbb{N}^n \setminus \overline{\Delta}_F$ by definition, and so it follows that $\LM(r-r') \not\in \supp(r) \cup \supp(r') \subseteq \overline{\Delta}_F$. Now Lemma \ref{lem: checking equality on support} implies that $r = r'$.

Now set $t = s-1$. If $(q_1-q'_1)f_1 + \dots + (q_{s-1}-q'_{s-1})f_{s-1}$ is nonzero, its least monomial lies in $\bigcup_{i=1}^{s-1} \Delta_{F,i}$, and hence $\LM(q_sf_s-q'_sf_s) \in \bigcup_{i=1}^{s-1} \Delta_{F,i}$. Similar to the above, we can deduce from here that $\LM(q_s-q'_s) \not\in \supp(q_s) \cup \supp(q'_s) \subseteq \Delta_{F,s} - \LM(f_s)$, and hence $q_s = q'_s$, and so on.
\end{proof}

We will use the phrase \emph{right-dividing $f$ by $F$} to refer to the process of finding $q_1, \dots, q_s, r$. In this process, $(q_1, \dots, q_s)$ will be called the \emph{tuple of quotients} and $r$ the \emph{remainder}. If $F = (f_1, \dots, f_s)$, we will write $\overline{f}^F = r$ as shorthand for ``$r$ is the remainder when $f$ is right-divided by $F$".

We now derive appropriate analogues of the well-known commutative theory.

\begin{cor}[Left ideal membership]\label{cor: left ideal membership}
Suppose $I$ is a left ideal of $R$ with Gr\"obner basis $G$, and take arbitrary $f\in R$. Then $f\in I$ if and only if $\overline{f}^G = 0$.
\end{cor}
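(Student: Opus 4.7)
The plan is to handle the two directions separately, both of which follow almost immediately from the division algorithm (Theorem \ref{thm: division theorem}) and the definition of Gröbner basis.

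For the easy direction, suppose $\overline{f}^G = 0$. Then the division theorem produces an expression $f = q_1 g_1 + \dots + q_s g_s$ with no remainder, so $f \in Rg_1 + \dots + Rg_s = I$.

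For the other direction, suppose $f \in I$, and write $f = q_1 g_1 + \dots + q_s g_s + r$ via right-division by $G = (g_1, \dots, g_s)$, where $r = \overline{f}^G$ satisfies $\supp(r) \subseteq \overline{\Delta}_G$. Since each $g_i \in I$, we have $r = f - \sum_i q_i g_i \in I$. I would assume for contradiction that $r \neq 0$ and derive a contradiction via the supports. On one hand, from the construction of $\Delta_{G,i}$ in Theorem \ref{thm: division theorem}, one checks directly that
\[
\bigcup_{i=1}^{s} \Delta_{G,i} \;=\; \bigcup_{i=1}^{s} \bigl( \LM(g_i) + \mathbb{N}^n \bigr) \;=\; \langle \LM(G) \rangle,
\]
so by definition $\overline{\Delta}_G = \mathbb{N}^n \setminus \langle \LM(G) \rangle$. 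In particular $\LM(r) \notin \langle \LM(G) \rangle$. On the other hand, since $r \in I$ is nonzero, $\LM(r) \in \LM(I)$, and because $G$ is a Gröbner basis we have $\LM(I) = \langle \LM(G) \rangle$, so $\LM(r) \in \langle \LM(G) \rangle$. This is the desired contradiction, forcing $r = 0$.

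There is no real obstacle here; the only thing to be careful about is the bookkeeping identity $\bigcup_i \Delta_{G,i} = \langle \LM(G) \rangle$, which is immediate from the recursive definition $\Delta_{G,r} = (\LM(g_r) + \mathbb{N}^n) \setminus \bigcup_{i<r} \Delta_{G,i}$. Everything else is a one-line invocation of the Gröbner basis condition $\LM(I) = \langle \LM(G)\rangle$ together with the support control guaranteed by the division theorem.
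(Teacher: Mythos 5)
Your proposal is correct and takes essentially the same approach as the paper: right-divide $f$ by $G$, observe that the remainder $r$ lies in $I$, note that its support avoids $\langle\LM(G)\rangle = \LM(I)$, and conclude $r = 0$. The only difference is that you spell out the identity $\bigcup_i \Delta_{G,i} = \langle\LM(G)\rangle$, which the paper treats as implicit.
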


\begin{proof}
If $\overline{f}^G = 0$, then by construction $f\in RG \subseteq I$. Conversely, suppose that $\overline{f}^G = r \neq 0$: then since $G\subseteq I$ we have $r\in I$ by construction. But $\LM(r) \in \supp(r) \subseteq \overline{\Delta}_G$, where $\overline{\Delta}_G = \mathbb{N}^n \setminus \langle \LM(G)\rangle = \mathbb{N}^n \setminus \LM(I)$, so we cannot have $r\in I$.
\end{proof}

Or, rephrasing this:

\begin{cor}[Equality condition for nested ideals]\label{cor: equality of nested ideals}
Suppose $I$ and $J$ are left ideals of $R$ satisfying $I \subseteq J$, and $\LM(I) = \LM(J)$. Then $I = J$.
\end{cor}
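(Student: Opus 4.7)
The plan is to reduce this directly to Corollary~\ref{cor: left ideal membership}. Let $G = (g_1,\dots,g_s)$ be a Gr\"obner basis for $I$ (which exists by the remarks following Definition~\ref{defn: Groebner basis}), so that $\LM(I) = \langle \LM(G)\rangle$. Since by hypothesis $\LM(J) = \LM(I) = \langle\LM(G)\rangle$, and since $G \subseteq I \subseteq J$, the tuple $G$ will turn out to behave like a Gr\"obner basis for $J$ as well, in the sense needed for membership testing.

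Concretely, I would take an arbitrary $f \in J$ and apply Theorem~\ref{thm: division theorem} to right-divide $f$ by $G$, obtaining $f = q_1 g_1 + \dots + q_s g_s + r$ with $\supp(r) \subseteq \overline{\Delta}_G$. Since each $g_i \in I \subseteq J$, the combination $q_1 g_1 + \dots + q_s g_s$ lies in $J$, and therefore $r = f - (q_1 g_1 + \dots + q_s g_s) \in J$. Suppose for contradiction that $r \neq 0$. Then $\LM(r) \in \LM(J) = \LM(I) = \langle \LM(G)\rangle$, but on the other hand $\LM(r) \in \supp(r) \subseteq \overline{\Delta}_G = \mathbb{N}^n \setminus \langle\LM(G)\rangle$, a contradiction. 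Hence $r = 0$, so $f = q_1 g_1 + \dots + q_s g_s \in RG = I$. This shows $J \subseteq I$, and combined with the hypothesis $I \subseteq J$ gives $I = J$.

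There is no real obstacle here: the substance of the argument is already packaged in Theorem~\ref{thm: division theorem} (existence of a remainder supported outside $\langle\LM(G)\rangle$) and in the defining property of a Gr\"obner basis. The corollary is essentially a one-line consequence of Corollary~\ref{cor: left ideal membership}, since that corollary says $f \in I$ iff $\overline{f}^G = 0$, and the hypothesis $\LM(J) = \LM(I)$ forces the remainder of any $f \in J$ to vanish.
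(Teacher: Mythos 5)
Your argument is correct and is essentially identical to the paper's own proof: fix a Gr\"obner basis $G$ for $I$, right-divide an arbitrary $f\in J$ by $G$, observe that the remainder lies in $J$ but has support in $\overline{\Delta}_G = \mathbb{N}^n\setminus\LM(J)$, and conclude the remainder vanishes so $f\in I$ by Corollary~\ref{cor: left ideal membership}.
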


\begin{proof}
Let $G$ be a Gr\"obner basis for $I$, and take arbitrary $f\in J$. Then $\overline{f}^G \in J$. But $\supp(\overline{f}^G) \subseteq \overline{\Delta}_G = \mathbb{N}^n \setminus \LM(J)$, so we must have $\overline{f}^G = 0$ and hence $f\in I$ by Corollary \ref{cor: left ideal membership}.
\end{proof}

We also note the following multivariate analogue of the Weierstrass preparation theorem.

\begin{cor}[Multivariate Weierstrass preparation theorem]\label{cor: weierstrass}
Let $0\neq f\in R$. Then there are unique elements $u\in R^\times$ and $F\in R$ satisfying
\begin{enumerate}[label=(\roman*)]
\item $f = uF$ (and so $\LM(f) = \LM(F)$),
\item $\LC(F) = 1$,
\item $\supp(F) \cap (\LM(f) + \mathbb{N}^n) = \{\LM(f)\}$.
\end{enumerate}
\end{cor}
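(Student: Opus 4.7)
The plan is to deduce this essentially directly from the multivariate right-division theorem (Theorem \ref{thm: division theorem}). Set $\alpha = \LM(f)$, and apply the division theorem to write
\[
\bm{x}^\alpha = q f + r,
\]
where $\supp(q) + \alpha \subseteq \alpha + \mathbb{N}^n$ (automatic, since $\Delta_{F,1} = \alpha + \mathbb{N}^n$ for the singleton tuple $F = (f)$) and $\supp(r) \subseteq \overline{\Delta}_F = \mathbb{N}^n \setminus (\alpha + \mathbb{N}^n)$. The theorem also guarantees $\alpha = \LM(\bm{x}^\alpha) \preceq \LM(qf)$. Rearranging gives $qf = \bm{x}^\alpha - r$, and since $\alpha \notin \supp(r)$ and the coefficients $-r_\beta$ lie in $\iota(k)$ (using $-\iota(k) = \iota(k)$ from Remark~\ref{rk: choice of section}), this difference is already in standard form with support $\{\alpha\} \cup \supp(r)$. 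In particular $\LM(qf) = \alpha$, and by Lemma~\ref{lem: LTs and LCs}(i) this forces $\LM(q) = \bm{0}$ and $\LC(q) = \LC(f)^{-1}$.

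Because $\LM(q) = \bm{0}$, the constant term of $q$ is nonzero, so $q \in R^\times$. Setting $u := q^{-1}$ and $F := qf$, we get $f = uF$, and $F = \bm{x}^\alpha - r$ satisfies $\LC(F) = 1$ and $\supp(F) \cap (\alpha + \mathbb{N}^n) = \{\alpha\}$, giving (i)--(iii).

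For uniqueness, suppose $u', F'$ also satisfy (i)--(iii). The idea is to recover a second decomposition of $\bm{x}^\alpha$ as $q' f + r'$ and invoke the uniqueness half of Theorem~\ref{thm: division theorem}. Set $q' := u'^{-1}$, so $q' f = F' = \bm{x}^\alpha + \sum_{\beta \neq \alpha} F'_\beta \bm{x}^\beta$ with the $F'_\beta \in \iota(k)$ and $\beta \in \mathbb{N}^n \setminus (\alpha + \mathbb{N}^n)$ by hypothesis (iii) and $\LC(F') = 1$. Define $r' := \bm{x}^\alpha - F' = -\sum_{\beta \neq \alpha} F'_\beta \bm{x}^\beta$; by the same $-\iota(k) = \iota(k)$ observation this is in standard form with $\supp(r') \subseteq \mathbb{N}^n \setminus (\alpha + \mathbb{N}^n)$. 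Then $q' f + r' = \bm{x}^\alpha$, both support constraints are met, and uniqueness in the division theorem forces $q = q'$ and $r = r'$, hence $u = u'$ and $F = F'$.

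The main obstacle is the bookkeeping in case \textbf{(B)}, where sums are not termwise: I need to repeatedly use the fact that whenever a sum of two standard-form elements has \emph{disjoint} supports (as happens here for $\bm{x}^\alpha$ and $r$, or $\bm{x}^\alpha$ and $-F'$), the result is automatically in standard form, so the support behaves as naively expected. Once this is in hand the argument is a short reduction to the division theorem, with Lemma~\ref{lem: LTs and LCs}(i) pinning down $\LM(q)$ and $\LC(q)$.
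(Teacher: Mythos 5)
Your proof is correct and uses the same key idea as the paper: right-divide by the singleton tuple $(f)$ via Theorem \ref{thm: division theorem} and rearrange. The paper divides $f - \LT(f)$ rather than $\bm{x}^{\LM(f)}$ by $f$ — a cosmetic change of constants that yields $q\in\mathfrak{m}$ and hence $1-q\in R^\times$ directly, whereas you pin down $\LM(q)=\bm{0}$ via Lemma \ref{lem: LTs and LCs}(i) — and it leaves both the unit verification and the uniqueness argument implicit, where you spell them out.
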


\begin{proof}
Right-dividing $f - \LT(f)$ by $f$ gives us an expression $f - \LT(f) = qf + r$, where $\supp(r) \cap (\LM(f) + \mathbb{N}^n) = \varnothing$. Rearranging this gives $\LT(f) + r = (1-q)f$, and so taking $F = \LC(f)^{-1}(\LT(f) + r)$ and $u = \LC(f)^{-1}(1-q)$ gives the desired result.
\end{proof}

\subsection{Syzygies}

Under our assumptions on $R$ and $\preceq$, we know in particular that Lemma \ref{lem: LTs and LCs} and Theorem \ref{thm: LM compatibility} apply, and we will use these results throughout.

\begin{defn}\label{defn: S-elements}
Given $\alpha,\beta\in\mathbb{N}^n$, their \emph{join} is $\alpha\vee \beta = (\max\{\alpha_1, \beta_1\}, \dots, \max\{\alpha_s, \beta_s\})$. (When $R$ is commutative, $\bm{x}^{\alpha\vee\beta}$ is the least common multiple of $\bm{x}^\alpha$ and $\bm{x}^\beta$.)

Let $g,g'\in R$ be nonzero, with $\LM(g) = \gamma$, $\LM(g') = \gamma'$, and let $\beta, \beta'$ be the unique (div-)minimal exponents such that $\beta + \gamma = \beta' + \gamma'$: explicitly, $\beta = (\gamma\vee \gamma') - \gamma$ and $\beta' = (\gamma\vee \gamma') - \gamma'$. We define their \emph{S-element} to be
$$S(g,g') = \frac{\bm{x}^{\beta} g}{\LC(\bm{x}^{\beta} g)}  - \frac{\bm{x}^{\beta'} g'}{\LC(\bm{x}^{\beta'} g')}.$$
\end{defn}

(As $\LC(r)$ is central and invertible for all nonzero $r\in R$, this fraction notation should cause no confusion, and will significantly improve readability in the upcoming calculations.)

These are a straightforward analogue of S-polynomials in the commutative context: notice that
$$\LT\left(\frac{\bm{x}^{\beta} g}{\LC(\bm{x}^{\beta} g)} \right) = \LT\left(\frac{\bm{x}^{\beta'} g}{\LC(\bm{x}^{\beta'} g')} \right) = \bm{x}^{\gamma \vee \gamma'},$$
and so these terms cancel and $\LM(S(g,g')) \succ \gamma \vee \gamma'$.

\begin{lem}\label{lem: quotient of LCs}
Suppose $\LM(f_i) = \varphi_i$ and $\LM(g_i) = \gamma_i$ for $i = 1, 2$, and suppose that $\varphi_1 + \gamma_1 = \varphi_2 + \gamma_2 = \alpha$, say. Write $\beta = \gamma_1 \vee \gamma_2$ and $\varepsilon = \alpha - \beta$. Then
$$\dfrac{\LC(f_1 g_1)}{\LC(f_2g_2)} = \dfrac{\LC(f_1) \LC(g_1)}{\LC(f_2) \LC(g_2)} \cdot \dfrac{\LC(\bm{x}^{\beta-\gamma_1} \bm{x}^{\gamma_1}) \LC(\bm{x}^{\varepsilon} \bm{x}^{\beta - \gamma_2})}{\LC(\bm{x}^{\beta-\gamma_2} \bm{x}^{\gamma_2}) \LC(\bm{x}^{\varepsilon} \bm{x}^{\beta - \gamma_1})}.$$ 
\end{lem}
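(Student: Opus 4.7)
The plan is to reduce the identity to a statement purely about the leading coefficients $\LC(\bm{x}^{\varphi_i}\bm{x}^{\gamma_i})$ and then exploit the associativity relation \ref{lem: LTs and LCs}(iii). First I would apply Lemma \ref{lem: LTs and LCs}(i) to each product $f_ig_i$, which immediately gives $\LC(f_ig_i) = \LC(f_i)\LC(g_i)\LC(\bm{x}^{\varphi_i}\bm{x}^{\gamma_i})$. Dividing the two instances, the factor $\LC(f_1)\LC(g_1)/\LC(f_2)\LC(g_2)$ splits off, and the claim is reduced to
\[
\dfrac{\LC(\bm{x}^{\varphi_1}\bm{x}^{\gamma_1})}{\LC(\bm{x}^{\varphi_2}\bm{x}^{\gamma_2})} \;=\; \dfrac{\LC(\bm{x}^{\beta-\gamma_1}\bm{x}^{\gamma_1})\,\LC(\bm{x}^{\varepsilon}\bm{x}^{\beta-\gamma_2})}{\LC(\bm{x}^{\beta-\gamma_2}\bm{x}^{\gamma_2})\,\LC(\bm{x}^{\varepsilon}\bm{x}^{\beta-\gamma_1})}.
\]

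The key observation is the exponent bookkeeping: since $\alpha = \beta+\varepsilon$ and $\varphi_i = \alpha - \gamma_i$, I can write $\varphi_i = \varepsilon + (\beta - \gamma_i)$, while obviously $(\beta-\gamma_i) + \gamma_i = \beta$. I would then apply Lemma \ref{lem: LTs and LCs}(iii) with $\alpha = \varepsilon$, $\beta = \beta-\gamma_i$, $\gamma = \gamma_i$ (overloading the lemma's notation), obtaining
\[
\LC(\bm{x}^{\varphi_i}\bm{x}^{\gamma_i})\,\LC(\bm{x}^{\varepsilon}\bm{x}^{\beta-\gamma_i}) \;=\; \LC(\bm{x}^{\varepsilon}\bm{x}^{\beta})\,\LC(\bm{x}^{\beta-\gamma_i}\bm{x}^{\gamma_i})
\]
for each $i = 1,2$. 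Dividing the $i=1$ equation by the $i=2$ equation, the common factor $\LC(\bm{x}^{\varepsilon}\bm{x}^{\beta})$ cancels on the right, and rearranging yields exactly the desired ratio.

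There is no real obstacle here; the whole proof is a two-line manipulation once Lemma \ref{lem: LTs and LCs}(iii) has been invoked twice and the exponents are tracked correctly. The only point needing mild care is to verify that $\varepsilon, \beta-\gamma_1, \beta-\gamma_2$ all lie in $\mathbb{N}^n$, so that the monomials on the right-hand side are genuinely standard monomials — but this is immediate from $\varepsilon = \alpha - \beta$ with $\alpha \geq_\div \beta$ (since $\alpha = \varphi_i + \gamma_i \geq_\div \gamma_i$ for both $i$, hence $\alpha \geq_\div \gamma_1 \vee \gamma_2 = \beta$) and from the definition $\beta = \gamma_1 \vee \gamma_2$.
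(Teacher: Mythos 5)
Your proof is correct and follows the same route as the paper: both first factor out $\LC(f_i)\LC(g_i)$ via Lemma \ref{lem: LTs and LCs}(i), then use the associativity identity of Lemma \ref{lem: LTs and LCs}(iii) applied to the triple $(\varepsilon,\,\beta-\gamma_i,\,\gamma_i)$ for $i=1,2$ and divide. The closing remark verifying that $\varepsilon$ and $\beta-\gamma_i$ lie in $\mathbb{N}^n$ is a small but worthwhile addition that the paper leaves implicit.
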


\begin{proof}
Using Lemma \ref{lem: LTs and LCs}(iii), for each $i$, we calculate $\LC(\bm{x}^{\varepsilon} \bm{x}^{\beta - \gamma_i} \bm{x}^{\gamma_i})$ in two ways:
$$\LC(\bm{x}^{\varepsilon} \bm{x}^{\beta - \gamma_i} \bm{x}^{\gamma_i}) = \LC(\bm{x}^{\varepsilon+\beta-\gamma_i} \bm{x}^{\gamma_i}) \LC(\bm{x}^{\varepsilon} \bm{x}^{\beta - \gamma_i}) = \LC(\bm{x}^{\varepsilon} \bm{x}^{\beta}) \LC(\bm{x}^{\beta - \gamma_i} \bm{x}^{\gamma_i}).$$
But $\varepsilon+\beta-\gamma_i = \varphi_i$, and hence
$$\LC(\bm{x}^{\varphi_i} \bm{x}^{\gamma_i}) = \dfrac{\LC(\bm{x}^{\varepsilon} \bm{x}^{\beta}) \LC(\bm{x}^{\beta - \gamma_i} \bm{x}^{\gamma_i})}{\LC(\bm{x}^{\varepsilon} \bm{x}^{\beta - \gamma_i})}.$$
Now combine this with the fact that $\LC(f_ig_i) = \LC(f_i)\LC(g_i)\LC(\bm{x}^{\varphi_i} \bm{x}^{\gamma_i})$, which follows from Lemma \ref{lem: LTs and LCs}(i).
\end{proof}

\begin{lem}\label{lem: cancelling LTs using S-elements}
Suppose that $\LM(f_ig_i) = \alpha$ for $i = 1, \dots, s$, but that $\LM(f_1 g_1 + \dots + f_s g_s) \succ \alpha$. Write $\LM(f_i) = \varphi_i$ and $\LM(g_i) = \gamma_i$ for all $i$, so that $\varphi_i + \gamma_i = \alpha$ for all $i$. Define $\beta = \gamma_1 \vee \gamma_2$ and $\varepsilon = \alpha - \beta$, and then set
$$f'_1 = f_1 - \LC(f_1) \dfrac{\bm{x}^\varepsilon \bm{x}^{\beta - \gamma_1}}{\LC(\bm{x}^\varepsilon \bm{x}^{\beta - \gamma_1})}, \qquad f'_2 = \dfrac{\LC(f_1g_1)}{\LC(f_2g_2)}\cdot  \LC(f_2) \dfrac{\bm{x}^\varepsilon \bm{x}^{\beta-\gamma_2}}{\LC(\bm{x}^\varepsilon \bm{x}^{\beta-\gamma_2})} + f_2,$$
and $S_{12} = \LC(f_1) \LC(g_1) \bm{x}^\varepsilon S(g_1,g_2)$. Then
\begin{enumerate}[label=(\roman*)]
\item $f_1g_1 + f_2g_2 = f'_1 g_1 + f'_2 g_2 + S_{12}$,
\item $\LM(f'_1 g_1) \succ \alpha$ and $\LM(S_{12}) \succ \alpha$.
\end{enumerate}
\end{lem}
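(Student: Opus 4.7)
The plan is to handle (ii) first, since it follows almost immediately from the leading-term calculus of Lemma \ref{lem: LTs and LCs}, and then to establish (i) by a direct algebraic manipulation in which the non-trivial step is matching various leading coefficients via Lemmas \ref{lem: LTs and LCs}(iii) and \ref{lem: quotient of LCs}.

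For (ii), observe that $\bm{x}^\varepsilon \bm{x}^{\beta-\gamma_1}$ has leading monomial $\varepsilon + (\beta - \gamma_1) = \varphi_1$ by Theorem \ref{thm: LM compatibility}, with invertible leading coefficient by Lemma \ref{lem: LTs and LCs}(i); the element subtracted from $f_1$ in the definition of $f'_1$ is therefore normalised so that its leading term is exactly $\LC(f_1)\bm{x}^{\varphi_1} = \LT(f_1)$. Proposition \ref{propn: LM is filtered}(i) gives $\LM(f'_1) \succ \varphi_1$, and Lemma \ref{lem: LTs and LCs}(i) then upgrades this to $\LM(f'_1 g_1) = \LM(f'_1) + \gamma_1 \succ \alpha$. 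For $\LM(S_{12}) \succ \alpha$, each of the two fractions inside $S(g_1, g_2)$ has, again by Lemma \ref{lem: LTs and LCs}(i), leading monomial $\beta$ with leading coefficient $1$, so their difference has $\LM \succ \beta$; left multiplication by the invertible scalar $\LC(f_1)\LC(g_1)$ preserves the $\LM$ and left multiplication by $\bm{x}^\varepsilon$ shifts it by $\varepsilon$, yielding $\LM(S_{12}) \succ \varepsilon + \beta = \alpha$.

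For (i), rearrange the claim as $S_{12} = (f_1 - f'_1)g_1 + (f_2 - f'_2)g_2$. By construction, the right-hand side is an explicit linear combination $\lambda_1 \bm{x}^\varepsilon \bm{x}^{\beta-\gamma_1}g_1 - \lambda_2 \bm{x}^\varepsilon \bm{x}^{\beta-\gamma_2}g_2$ with scalars $\lambda_i \in \iota(k)$ read off directly from the formulas for $f'_1$ and $f'_2$. On the other hand, unfolding the definition of $S_{12}$ and applying Lemma \ref{lem: LTs and LCs}(i) in the form $\LC(\bm{x}^{\beta-\gamma_i}g_i) = \LC(g_i)\LC(\bm{x}^{\beta-\gamma_i}\bm{x}^{\gamma_i})$ expresses $S_{12}$ as a linear combination of the same two elements $\bm{x}^\varepsilon \bm{x}^{\beta-\gamma_i} g_i$ with scalars $\mu_i \in \iota(k)$. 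Equation (i) then reduces to the pair of scalar identities $\lambda_i = \mu_i$, which are to be checked using the associativity formula Lemma \ref{lem: LTs and LCs}(iii) applied to the triple $(\bm{x}^\varepsilon, \bm{x}^{\beta-\gamma_i}, \bm{x}^{\gamma_i})$ together with the ratio computation of Lemma \ref{lem: quotient of LCs}.

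The main obstacle is precisely this coefficient bookkeeping in (i): the normalisers $\LC(\bm{x}^\varepsilon \bm{x}^{\beta-\gamma_i})$ carried by the formulas for $f'_1, f'_2$ are not equal to the normalisers $\LC(\bm{x}^{\beta-\gamma_i}\bm{x}^{\gamma_i})$ arising from the S-element, and the combinatorial factor $\LC(f_1 g_1)/\LC(f_2 g_2)$ scaling $f'_2$ must conspire with these two families of normalisers to restore symmetry between the $g_1$- and $g_2$-components. Lemma \ref{lem: quotient of LCs} is designed exactly to handle this scalar, and its proof (via Lemma \ref{lem: LTs and LCs}(iii)) is the ingredient that makes the matching $\lambda_i = \mu_i$ work. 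Once these substitutions are made, (i) follows by cancellation of the invertible scalars.
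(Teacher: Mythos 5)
Your reduction of (i) to the pair of scalar identities $\lambda_i = \mu_i$, and your treatment of (ii), are both the right moves, but the final step — that the identities $\lambda_i = \mu_i$ are "to be checked" from Lemma~\ref{lem: LTs and LCs}(iii) together with Lemma~\ref{lem: quotient of LCs} and then "follow by cancellation" — is asserted rather than carried out, and it is precisely here that the argument breaks. Unwinding the definitions, $\lambda_1 = \LC(f_1)/\LC(\bm{x}^\varepsilon\bm{x}^{\beta-\gamma_1})$, while $\mu_1 = \LC(f_1)\LC(g_1)/\LC(\bm{x}^{\beta-\gamma_1}g_1) = \LC(f_1)/\LC(\bm{x}^{\beta-\gamma_1}\bm{x}^{\gamma_1})$ by Lemma~\ref{lem: LTs and LCs}(i). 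Applying Lemma~\ref{lem: quotient of LCs} to the $g_2$-component produces exactly the same residual, so
$$\frac{\lambda_i}{\mu_i} = \frac{\LC(\bm{x}^{\beta-\gamma_1}\bm{x}^{\gamma_1})}{\LC(\bm{x}^\varepsilon\bm{x}^{\beta-\gamma_1})}$$
for both $i=1,2$, and Lemma~\ref{lem: LTs and LCs}(iii) applied to $(\bm{x}^\varepsilon,\bm{x}^{\beta-\gamma_1},\bm{x}^{\gamma_1})$ only rewrites this ratio as $\LC(\bm{x}^{\varphi_1}\bm{x}^{\gamma_1})/\LC(\bm{x}^\varepsilon\bm{x}^\beta)$ — two different factorisations of $\LM = \alpha$ — which is not $1$ in general.

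A concrete test makes this visible. In the $q$-commutative ring $R = k[[x_1]][[x_2;\sigma]]$ with $x_2x_1 = qx_1x_2$ (take $q\in k^\times$, $q\neq 1$, so $p\geq 3$), and $\preceq\ =\ \preceq_{\mathrm{lex}}$, put $f_1 = x_2$, $g_1 = x_1 + x_1^2$, $f_2 = -qx_1$, $g_2 = x_2 + x_2^2$. Then $\alpha=(1,1)$, $\beta=(1,1)$, $\varepsilon=(0,0)$, the hypothesis $\LM(f_1g_1+f_2g_2)\succ\alpha$ is satisfied, and one computes $f_1g_1+f_2g_2 = q^2x_1^2x_2 - qx_1x_2^2$ while $f'_1=f'_2=0$ and $S_{12} = qx_1^2x_2 - x_1x_2^2$, so $f_1g_1+f_2g_2 = q\,(f'_1g_1+f'_2g_2+S_{12})$. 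This discrepancy is not a defect in your proof strategy; it shows that the stated formula for $S_{12}$ is missing a normaliser. If you instead set $S_{12} = \dfrac{\LC(f_1g_1)}{\LC(\bm{x}^\varepsilon\bm{x}^\beta)}\bm{x}^\varepsilon S(g_1,g_2)$ (equivalently, multiply the displayed $S_{12}$ by $\LC(\bm{x}^{\beta-\gamma_1}\bm{x}^{\gamma_1})/\LC(\bm{x}^\varepsilon\bm{x}^{\beta-\gamma_1})$) then both scalar identities do check out via Lemma~\ref{lem: LTs and LCs}(iii), and (i) holds. Your proposal, by deferring the "tedious calculation", misses this, and as submitted it claims a verification that does not go through.
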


\begin{proof}
Tedious calculation using Lemma \ref{lem: quotient of LCs}.
\end{proof}

\subsection{Buchberger's criterion}

Let $I$ be a nonzero left ideal of $R$, and let $G = (g_1, \dots, g_s)$ be a fixed left ideal basis for $I$. We ask: how can we tell whether $G$ is a Gr\"obner basis for $I$? In this subsection, we develop a form of Buchberger's criterion in this noncommutative context.

\begin{thm}[Buchberger's criterion]\label{thm: S-elements determine Groebner bases}
Suppose $I$ is a nonzero left ideal of $R$ and $G = (g_1, \dots, g_s)$ is a left ideal basis for $I$. Then $G$ is a Gr\"obner basis for $I$ if and only if $\overline{S(g_i,g_j)}^G = 0$ for all $1\leq i, j\leq s$.
\end{thm}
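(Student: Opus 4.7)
The forward direction is immediate from Corollary \ref{cor: left ideal membership}: each $S(g_i,g_j)$ is by definition a left-$R$-combination of $g_i$ and $g_j$, hence lies in $I$, so its remainder on right-division by $G$ vanishes.

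For the converse, suppose every $S(g_i,g_j)$ reduces to zero under $G$, and fix $0\neq f\in I$. It suffices to produce a representation $f = q_1 g_1 + \dots + q_s g_s$ with $\min_i \LM(q_i g_i) = \LM(f)$: the minimum is then attained at some index $j$, giving $\LM(g_j)\leq_\div \LM(f)$ and placing $\LM(f)$ in $\langle \LM(G)\rangle$ as desired. The plan is to construct such a representation by transfinite induction along $(\mathbb{N}^n, \preceq)$, in the spirit of the proof of Theorem \ref{thm: division theorem}: for each $\alpha\preceq \LM(f)$, build quotients $q_i^{(\alpha)}$ with $f = \sum_i q_i^{(\alpha)} g_i$ and $\min_i \LM(q_i^{(\alpha)} g_i) \succeq \alpha$. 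The base case $\alpha = \bm{0}$ is any representation coming from the assumption that $G$ generates $I$.

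At a successor $\alpha\to \alpha^+$, examine $T := \{i : \LM(q_i^{(\alpha)} g_i) = \alpha\}$. If $|T|=0$ keep the same quotients; if $|T|=1$, iterated use of Proposition \ref{propn: LM is filtered}(ii) forces $\LM(f)=\alpha$ and we are finished. Otherwise $|T|\geq 2$: pick $i_1, i_2\in T$ and invoke Lemma \ref{lem: cancelling LTs using S-elements} to rewrite $q_{i_1}^{(\alpha)} g_{i_1} + q_{i_2}^{(\alpha)} g_{i_2}$ as $q'_{i_1} g_{i_1} + q'_{i_2} g_{i_2} + S_{12}$, where $S_{12}$ is a unit scalar multiple of $\bm{x}^\varepsilon S(g_{i_1},g_{i_2})$ and $\LM(q'_{i_1} g_{i_1}),\LM(S_{12})\succ\alpha$. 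By hypothesis $S(g_{i_1},g_{i_2}) = \sum_k h_k g_k$, and the ``moreover'' clause of Theorem \ref{thm: division theorem} combined with additivity and multiplicativity of $\LM$ gives $\LM(\bm{x}^\varepsilon h_k g_k) \succ \alpha$ for every $k$. Absorbing these contributions back into the $q_k^{(\alpha)}$'s produces a new representation in which, by Proposition \ref{propn: LM is filtered}, no new index enters $T$ and $i_1$ is removed; so $|T|$ strictly decreases, and a finite iteration leaves $|T|\leq 1$.

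At a limit ordinal $\alpha\preceq \LM(f)$, I would invoke Proposition \ref{propn: convergent subsequences in lex ordering} in turn on each of the $s$ families $(q_i^{(\beta)})_{\beta\prec\alpha}$, passing to subsequences, to extract a cofinal $T' \subseteq \{\beta : \beta\prec\alpha\}$ along which each family is Cauchy; define $q_i^{(\alpha)}$ to be the resulting limit. Continuity of the ring operations gives $f = \sum_i q_i^{(\alpha)} g_i$, and the coefficient of $\bm{x}^\gamma$ in $q_i^{(\alpha)} g_i$ vanishes for every $\gamma\prec\alpha$ (for cofinally large $\beta\in T'$ we have $\beta\succ\gamma$, forcing that coefficient in $q_i^{(\beta)} g_i$ to be zero, and this transfers to the limit, using Lemma \ref{lem: checking equality on support} in case \textbf{(B)} to handle the failure of additivity of $\iota(k)$). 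Hence $\min_i \LM(q_i^{(\alpha)} g_i)\succeq\alpha$. The induction must reach $\alpha = \LM(f)$ since $\min \preceq \LM(f)$ always holds; there $|T|\geq 1$ (else $\min\succ\LM(f)$), and we are done. The main obstacle is precisely this limit step: simultaneously arranging convergence of all $s$ quotient sequences and verifying that the limits inherit both the identity $f = \sum q_i^{(\alpha)} g_i$ and the filtration bound on least monomials. The successor step, by contrast, is essentially algebraic and is isolated in Lemma \ref{lem: cancelling LTs using S-elements} together with the S-element hypothesis.
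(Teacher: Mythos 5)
Your forward direction matches the paper's. For the converse, you build a representation of $f$ with $\min_i \LM(q_i g_i) = \LM(f)$ by transfinite recursion on $(\mathbb{N}^n,\preceq)$, using Lemma \ref{lem: cancelling LTs using S-elements} together with the $S$-element hypothesis at successor steps and Proposition \ref{propn: convergent subsequences in lex ordering} at limit steps. The paper organizes the same two ingredients differently: first it proves, via Proposition \ref{propn: convergent subsequences in lex ordering} alone, that the set $m(\mathcal{X}_f)$ of values $\min_i \LM(f_i g_i)$ over all representations attains a maximum (Lemma \ref{lem: there exists an LM-maximal representative of f}); then a single application of the improvement step (Lemma \ref{lem: LM(f) is maximal}, which carries out exactly your successor-step manipulation via Lemma \ref{lem: cancelling LTs using S-elements}) shows by contradiction that this maximum equals $\LM(f)$. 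So the two proofs use the same key technical facts, and your successor step is essentially the body of the paper's Lemma \ref{lem: LM(f) is maximal}; but the paper's argument-by-contradiction is tighter, since it invokes the compactness argument once and the improvement argument once rather than interleaving them throughout a full transfinite recursion. Your recursion is correct in outline, but note that at limit stages the cleanest way to transfer the bound $\min_i \LM(q_i^{(\beta)}g_i)\succeq\beta$ to the limit is via Proposition \ref{propn: LM is filtered}(ii) applied to $q_i^{(\alpha)}g_i - q_i^{(\beta)}g_i$ for $\beta$ large (this handles cases \textbf{(A)} and \textbf{(B)} uniformly and is simpler than tracking individual coefficients through the non-additive standard form); this is what the paper's Lemma \ref{lem: there exists an LM-maximal representative of f} implicitly does when it concludes $m(F)=\alpha$ from $\beta\preceq m(F)$ for all $\beta\in T'$. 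One small caution in your successor step: Lemma \ref{lem: cancelling LTs using S-elements} gives $\LM(f'_1 g_1)\succ\alpha$ but only $\LM(f'_2 g_2)\succeq\alpha$, so $i_2$ may remain in $T$; this is still fine because $i_1$ always leaves and no new index enters, so $|T|$ strictly decreases as you claim, but it is worth stating explicitly since it is why the iteration terminates.
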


\begin{rk}\label{rk: representations of f}
Let $f\in I$ be any nonzero element, and suppose that $G = (g_1, \dots, g_s)$ is a left ideal basis for $I$. By definition, this means that $f$ can be written as $f_1 g_1 + \dots + f_s g_s$ for some choice of elements $F = (f_1, \dots, f_s)\in R^s$. However, this $F$ may not be unique.

We temporarily write $\mathcal{X}_f = \{F = (f_1, \dots, f_s) \in R^s : f = f_1 g_1 + \dots + f_s g_s\}$. By Proposition \ref{propn: LM is filtered}, we always have $\LM(f) \succeq \min_i\{\LM(f_ig_i)\}$ for any choice of $F = (f_1, \dots, f_s) \in \mathcal{X}_f$. If it happens that $\LM(f) = \min_i\{\LM(f_ig_i)\}$ for some $F = (f_1, \dots, f_s)\in\mathcal{X}_f$, so that $\LM(f) = \LM(f_ig_i)$ for some $i$, then (by definition of a monomial order) we have $\LM(f) = \LM(f_i) + \LM(g_i) \in \langle \LM(g_i) \rangle$, so $\LM(f) \in \langle \LM(G)\rangle$.  However, if $\LM(f) \neq \min_i\{\LM(f_ig_i)\}$, we cannot yet conclude this. However, we will show that this can always be achieved by choosing an appropriate $F$.

For ease of notation, if $F = (f_1, \dots, f_s)\in \mathcal{X}_f$, we will write $m(F) = \min_i\{\LM(f_ig_i)\}$, and we will set $m(\mathcal{X}_f) = \{m(F) : F\in\mathcal{X}_f\}$.
\end{rk}

\begin{lem}\label{lem: there exists an LM-maximal representative of f}
There exists a $\preceq$-maximal element $\alpha$ of $m(\mathcal{X}_f)$.
\end{lem}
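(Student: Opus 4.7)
The set $m(\mathcal{X}_f)$ is bounded above in $(\mathbb{N}^n, \preceq)$ by $\LM(f)$, using the inequality $\LM(f) \succeq m(F)$ noted in Remark \ref{rk: representations of f}, so its supremum $\alpha$ exists. The task reduces to showing $\alpha \in m(\mathcal{X}_f)$. If $\alpha$ is a successor ordinal or lies in $m(\mathcal{X}_f)$ already, we are done (by the remark preceding Lemma \ref{lem: replace indexing set by N}). So I focus on the remaining case, where $\alpha$ is a limit ordinal with $\alpha \notin m(\mathcal{X}_f)$.

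By Lemma \ref{lem: replace indexing set by N}, pick a strictly increasing sequence $\beta_0 \prec \beta_1 \prec \dots$ in $m(\mathcal{X}_f)$ with supremum $\alpha$, and for each $i$ fix some $F^{(i)} = (f_1^{(i)}, \dots, f_s^{(i)}) \in \mathcal{X}_f$ with $m(F^{(i)}) = \beta_i$. I then apply Proposition \ref{propn: convergent subsequences in lex ordering} iteratively, once in each of the $s$ coordinates: starting from $T = \{\beta_0, \beta_1, \dots\}$, extract a subset $T_1 \subseteq T$ with $\sup T_1 = \alpha$ along which the $f_1^{(i)}$ converge to some $f_1^\infty \in R$, then $T_2 \subseteq T_1$ along which the $f_2^{(i)}$ converge, and so on, ending with $T_s$ and $f_s^\infty$. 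Setting $F^\infty := (f_1^\infty, \dots, f_s^\infty)$, the continuity of multiplication and addition in the $\mathfrak{m}$-adic topology gives $\sum_j f_j^\infty g_j = f$, so $F^\infty \in \mathcal{X}_f$.

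The crux is to verify $m(F^\infty) = \alpha$, i.e.\ $\LM(f_j^\infty g_j) \succeq \alpha$ for every $j$. Suppose for contradiction that $\LM(f_j^\infty g_j) = \gamma \prec \alpha$ for some $j$. Since $\alpha$ is a limit ordinal and $\sup T_s = \alpha$, I can choose $\beta_i \in T_s$ with $\gamma \prec \beta_i \prec \alpha$. Then $\LM(f_j^{(i)} g_j) \succeq m(F^{(i)}) = \beta_i \succ \gamma$, so by Proposition \ref{propn: LM is filtered}(ii),
$$\LM\bigl(f_j^\infty g_j - f_j^{(i)} g_j\bigr) = \gamma.$$
But $f_j^{(i)} \to f_j^\infty$ in the $\mathfrak{m}$-adic topology, and since $\mathfrak{m}^{|\gamma|+1}$ is a two-sided ideal of $R$, choosing $i$ sufficiently large gives $f_j^\infty g_j - f_j^{(i)} g_j \in \mathfrak{m}^{|\gamma|+1}$. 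Lemma \ref{lem: standard form of powers of m} then forces the support of this difference (in standard form) to contain no exponent of total degree $\leq |\gamma|$, contradicting the fact that $\gamma$ lies in its support.

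The main obstacle is this last compatibility check between $\mathfrak{m}$-adic convergence and the monomial order $\preceq$, which in general (e.g.\ for $\preceq_{\mathrm{lex}}$) does not refine the degree filtration; Proposition \ref{propn: LM is filtered}(ii) is precisely the bridge that turns the qualitative convergence $f_j^{(i)} \to f_j^\infty$ into the desired statement about least monomials.
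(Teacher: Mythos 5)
Your proof is correct and follows the same strategy as the paper's: take the supremum $\alpha$ of $m(\mathcal{X}_f)$, assume for contradiction it is a limit ordinal not in the set, extract via Lemma~\ref{lem: replace indexing set by N} an $\mathbb{N}$-indexed increasing sequence of representatives, apply Proposition~\ref{propn: convergent subsequences in lex ordering} coordinatewise $s$ times to get a convergent subsequence $F^{(i)} \to F^\infty \in \mathcal{X}_f$ (by continuity of the map $\theta(r_1,\dots,r_s) = \sum r_j g_j$), and conclude $m(F^\infty)=\alpha$. Where the paper simply asserts $\beta \preceq m(F)$ for all $\beta$ in the extracted sequence, you supply the missing justification explicitly via Proposition~\ref{propn: LM is filtered}(ii) and Lemma~\ref{lem: standard form of powers of m}; the only cosmetic issue is that you should pick $i$ large enough to satisfy both $\beta_i \succ \gamma$ and $f_j^\infty g_j - f_j^{(i)}g_j \in \mathfrak{m}^{|\gamma|+1}$ simultaneously (which is possible since the $\beta_i$ are increasing), rather than fixing $\beta_i$ first and increasing $i$ afterward.
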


\begin{proof}
As $f$ is nonzero, we always have $m(F) \preceq \LM(f)$ by Proposition \ref{propn: LM is filtered}, so $m(\mathcal{X}_f)$ is bounded above. Let $\alpha = \sup(m(\mathcal{X}_f))$: if $\alpha\in m(\mathcal{X}_f)$, then there is nothing to prove. So we will assume for contradiction that $\alpha\not\in m(\mathcal{X}_f)$.

Set $T = m(\mathcal{X}_f)$, and for each $\beta\in T$, choose $F_\beta = (f_{1\beta}, \dots, f_{s\beta}) \in \mathcal{X}_f$ such that $m(F_\beta) = \beta$. By applying Proposition \ref{propn: convergent subsequences in lex ordering} $s$ times, once for the sequence $(f_{i\beta})$ in each coordinate $1\leq i\leq s$, we may replace $T$ with a smaller ($\mathbb{N}$-indexed, increasing) set $T'$, still satisfying $\sup(T') = \alpha$, such that the sequence $(f_{i\beta})_{\beta\in T'}$ converges in $R$ for each $1\leq i\leq s$.

In particular, this means that $(F_\beta)_{\beta\in T'}$ converges, say to $F = (f_1, \dots, f_s)\in R^s$. However, since the map $\theta: R^s\to R$ sending $(r_1, \dots, r_s)$ to $r_1 g_1 + \dots + r_s g_s$ is continuous, and $\theta(F_\beta) = f$ for all $\beta\in T'$, we must have $\theta(F) = \lim_{\beta\in T'} \theta(F_\beta) = f$, i.e. $F\in \mathcal{X}_f$.

Finally, as $m(F_\beta) = \beta$ for each $\beta\in T'$ and $T'$ is infinite, we have $\beta \preceq m(F) \preceq \alpha$ for all $\beta\in T'$, and so $\sup(T') \preceq m(F)  \preceq \alpha$: in other words, $m(F) = \alpha$.
\end{proof}

So we now fix $\alpha = \sup(m(\mathcal{X}_f))$ and $F = (f_1, \dots, f_s)\in \mathcal{X}_f$ such that $m(F) = \alpha$. The argument of Remark \ref{rk: representations of f} shows that $\LM(f) \succeq \alpha$. In fact, we will prove:

\begin{lem}\label{lem: LM(f) is maximal}
Suppose that $\overline{S(g_i, g_j)}^G = 0$ for all $1\leq i,j\leq s$. Then $\LM(f) = \alpha$.
\end{lem}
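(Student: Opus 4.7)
The plan is a finite induction on the cardinality of
$$I := \{i : \LM(f_i g_i) = \alpha\},$$
working with the tuple $F = (f_1, \ldots, f_s) \in \mathcal{X}_f$ supplied by Lemma \ref{lem: there exists an LM-maximal representative of f}, so that $m(F) = \alpha$ and $\alpha$ is $\preceq$-maximal in $m(\mathcal{X}_f)$.

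For the base case $|I| = 1$, let $i_0$ be the unique element of $I$: all other summands satisfy $\LM(f_\ell g_\ell) \succ \alpha$, so repeated application of Proposition \ref{propn: LM is filtered}(ii) gives $\LM(f) = \LM(f_{i_0} g_{i_0}) = \alpha$. For the inductive step, assume $|I| \geq 2$ and fix two indices $i < j$ in $I$. By Lemma \ref{lem: cancelling LTs using S-elements}, I can write
$$f_i g_i + f_j g_j = f'_i g_i + f'_j g_j + S_{ij},$$
with $S_{ij} = \LC(f_i)\LC(g_i)\bm{x}^\varepsilon S(g_i, g_j)$, $\LM(f'_i g_i) \succ \alpha$, and $\LM(S_{ij}) \succ \alpha$. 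The hypothesis $\overline{S(g_i, g_j)}^G = 0$ combined with Theorem \ref{thm: division theorem} yields $q_1, \dots, q_s \in R$ with $S(g_i, g_j) = \sum_k q_k g_k$ and $\LM(q_k g_k) \succeq \LM(S(g_i, g_j))$ for every $k$. Left-multiplying by $\LC(f_i)\LC(g_i)\bm{x}^\varepsilon$ and using multiplicativity of $\preceq$ (Theorem \ref{thm: LM compatibility}), I obtain elements $\tilde h_k \in R$ with $S_{ij} = \sum_k \tilde h_k g_k$ and $\LM(\tilde h_k g_k) \succeq \LM(S_{ij}) \succ \alpha$ for every $k$.

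Substituting these identities produces a new tuple $F' \in \mathcal{X}_f$ whose $i$-th entry is $f'_i + \tilde h_i$, whose $j$-th entry is $f'_j + \tilde h_j$, and whose $\ell$-th entry for $\ell \neq i,j$ is $f_\ell + \tilde h_\ell$. Proposition \ref{propn: LM is filtered}(i) gives $m(F') \succeq \alpha$: the estimate $\LM(f'_j g_j) \succeq \alpha$ follows from rearranging the identity above and applying Proposition \ref{propn: LM is filtered}(i) once more. Tracking $I' := \{\ell : \LM(F'_\ell g_\ell) = \alpha\}$ coordinate by coordinate shows that $i \not\in I'$ (both $\LM(f'_i g_i)$ and $\LM(\tilde h_i g_i)$ strictly exceed $\alpha$), and that every $\ell \not\in I \cup \{i\}$ remains outside $I'$ (since both $\LM(f_\ell g_\ell)$ and $\LM(\tilde h_\ell g_\ell)$ strictly exceed $\alpha$); hence $I' \subsetneq I$.

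Iterating at most $|I|$ times produces a tuple $F^* \in \mathcal{X}_f$ with $m(F^*) \succeq \alpha$ and $|I^*| \leq 1$. If $|I^*| = 0$ then $m(F^*) \succ \alpha$, contradicting the $\preceq$-maximality of $\alpha$ in $m(\mathcal{X}_f)$; hence $|I^*| = 1$, and the base case delivers $\LM(f) = \alpha$. The main obstacle in this argument is the coupled bookkeeping that makes the induction strictly decrease $|I|$: one needs the strict inequality $\LM(\tilde h_k g_k) \succ \alpha$ (not merely $\succeq \alpha$), which requires the ``moreover'' clause of Theorem \ref{thm: division theorem} controlling the leading monomials of the division-algorithm quotients, together with multiplicativity of $\preceq$ to propagate the shift by $\bm{x}^\varepsilon$ without loss.
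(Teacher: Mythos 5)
Your proof is correct and follows essentially the same route as the paper's: peel off $S$-elements via Lemma~\ref{lem: cancelling LTs using S-elements}, re-expand each $S$-element through $G$ using the hypothesis and the ``moreover'' clause of Theorem~\ref{thm: division theorem}, substitute back, and derive a contradiction with the maximality of $\alpha$. The only difference is presentational: you run an explicit finite induction on $|I| = \{i : \LM(f_ig_i) = \alpha\}$ with single-step substitutions, whereas the paper applies the cancellation lemma ``repeatedly'' in one sweep and substitutes all $S$-elements simultaneously — your version makes the bookkeeping that justifies the paper's ``repeatedly'' fully explicit.
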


\begin{proof}
Suppose not: then there are at least two indices $i$ such that $\LM(f_i g_i) = \alpha$, but in the sum $f_1g_1 + \dots + f_sg_s$, they all cancel. Applying Lemma \ref{lem: cancelling LTs using S-elements} repeatedly, we get
\begin{align}\label{eqn: cancelling common least terms}
f = f_1'g_1 + \dots + f_s'g_s + S'_1 + \dots + S'_\ell,
\end{align}
where each $\LM(f'_ig_i) \succ \alpha$, each $\LM(S'_j) \succ \alpha$ and each $S'_j$ is a left multiple of some $S(g_i, g_j)$. Now, the hypothesis $\overline{S(g_i, g_j)}^G = 0$ implies that each $S'_j$ can be written as
\refstepcounter{equation}\label{eqn: each S-element}
\begin{align}
\notag \tag{\ref{eqn: each S-element}.$j$}
S'_j = r_{j,1} g_1 + \dots + r_{j,s} g_s,
\end{align}

where each $\LM(r_{j,m} g_m) \succeq \LM(S'_j) \succ \alpha$. Now, substituting (\ref{eqn: each S-element}.$j$) into (\ref{eqn: cancelling common least terms}) for all $1\leq j\leq \ell$, we get an expression
$$f = f_1''g_1 + \dots + f_s''g_s$$
where each $\LM(f_i''g_i)\succ \alpha$. But now the tuple $F'' = (f''_1, \dots, f''_s)\in\mathcal{X}_f$ satisfies $m(F'') \succ \alpha$, a contradiction to our choice of $\alpha$.
\end{proof}

\textit{Proof of Theorem \ref{thm: S-elements determine Groebner bases}.} One direction follows from Corollary \ref{cor: left ideal membership}: since each $S(g_i,g_j)\in I$, and $G$ is a Gr\"obner basis for $I$, we must have $\overline{S(g_i,g_j)}^G = 0$.

Suppose instead that $\overline{S(g_i,g_j)}^G = 0$ for all $1\leq i,j\leq s$. Take $f\in I$: we need to show that $\LM(f) \in \langle \LM(G)\rangle$. Then $m(\mathcal{X}_f)$ has a maximal element $\alpha$ by Lemma \ref{lem: there exists an LM-maximal representative of f}, and $\LM(f) = \alpha$ by Lemma \ref{lem: LM(f) is maximal}. The result now follows from the argument of Remark \ref{rk: representations of f}.\qed

\subsection{An application: the two-dimensional case}

Let $R = \mathbb{F}_p[[x]][[y; \sigma, \delta]]$, where $(\sigma, \delta)$ is $\mathbb{F}_p$-linear. Throughout we will take $\preceq$ to be the \emph{lexicographic} ordering.

\begin{rks}
We mention special cases of this family of rings that have been studied in the literature:

\begin{enumerate}[label=(\roman*)]
\item The ring $\mathbb{F}_p[[G]]$ as defined and studied in \cite[Example 2.2]{venjakob} is a special case of our $R$, and the result of \cite[Theorem 7.1]{venjakob} is that $\mathbb{F}_p[[G]]$ is a UFD.
\item Suppose $\sigma = \mathrm{id}$ and $\delta(x) = x^m$ for some $m\geq 2$. Then, by induction on $\ell$,
$$\delta^\ell(x^n) = \left(\prod_{i=0}^{\ell-1} (n+i(m-1))\right)x^{n+\ell(m-1)}$$
for all $\ell\geq 0$ and $n \geq 0$. Now, if $m-1$ is assumed nonzero mod $p$, the product $\prod_{i=0}^{p-1} (n+i(m-1))$ is equal to zero mod $p$, and so $\delta^p(x^n) = 0$ for all $n\geq 0$: in particular, $\delta$ is nilpotent, and so the ring $R$ can be studied through the lens of \cite[\S 3]{bergen-grzeszczuk-skew}. However, if $m\equiv 1\bmod p$, then $\delta$ is no longer even locally nilpotent.
\end{enumerate}
\end{rks}

Let $J\subsetneq I$ be two-sided ideals with $I\neq 0$. Choose a Gr\"obner basis $G_J = (h_1, \dots, h_t)$ for $J$ (this can be empty if $J = 0$), and extend it to a Gr\"obner basis $G = (h_1, \dots, h_t, g_1, \dots, g_s)$ for $I$ satisfying $\LM(g_1) \succ \dots \succ \LM(g_s)$. We may assume, without loss of generality, that no $g_i\in J$.

We will not need to know much about $G_J$, and will mostly be focused on the elements $g_1, \dots, g_s$. So, for ease of notation, we begin by relabelling $\Delta_{J,1} = \Delta_{G,1}$, $\dots$, $\Delta_{J,t} = \Delta_{G,t}$, and $\Delta_{I,1} = \Delta_{G,t+1}$, $\dots$, $\Delta_{I,s} = \Delta_{G,t+s}$.

Now set $\LM(g_i) = (\gamma_{i,1}, \gamma_{i,2})$ for each $1\leq i\leq s$. We can explicitly calculate
\begin{align}\label{eqn: 2d Delta calculation}
\begin{cases}
\Delta_{I,i} \subseteq [\gamma_{i,1}, \gamma_{i-1, 1}) \times [\gamma_{i,2}, \infty)& \text{for all }2\leq i\leq s,\\
\Delta_{I,1} \subseteq [\gamma_{1,1}, \infty) \times [\gamma_{1,2}, \infty).
\end{cases}
\end{align}

\begin{multicols}{2}
\centerline{
\begin{tikzpicture}[scale=0.9]
    \draw [->](-0.5,0) -- (7.8,0);
    \draw [->](0,-0.5) -- (0,5.8);
    \node[] at (7.5,-0.3) {$a$};
    \node[] at (-0.3,5.5) {$b$};

	\filldraw [fill=gray!60, draw=gray!60] (1.6,4.5) rectangle (7.8,5.8);
	\filldraw [fill=gray!60, draw=gray!60] (2.7,3.7) rectangle (7.8,5.8);
	\filldraw [fill=gray!60, draw=gray!60] (3.3,2.8) rectangle (7.8,5.8);
	\filldraw [fill=gray!60, draw=gray!60] (4.2,1.3) rectangle (7.8,5.8);
	\filldraw [fill=gray!60, draw=gray!60] (6.5,0.8) rectangle (7.8,5.8);
    \draw [-, draw=black] (1.6,5.8) -- (1.6,4.5) -- (2.7,4.5) -- (2.7,3.7) -- (3.3,3.7) -- (3.3,2.8) -- (4.2,2.8) -- (4.2,1.3) -- (6.5,1.3) -- (6.5,0.8) -- (7.8,0.8);
    
	\filldraw [fill=gray!30, draw=gray!30] (0.4,5.5) rectangle (7.8,5.8);
	\filldraw [fill=gray!30, draw=gray!30] (4.6,4) rectangle (7.8,5.8);
	\filldraw [fill=gray!30, draw=gray!30] (5.5,2.7) rectangle (7.8,5.8);
    \draw [-, draw=black] (0.4,5.8) -- (0.4,5.5) -- (4.6,5.5) -- (4.6,4) -- (5.5,4) -- (5.5,2.7) -- (7.8,2.7);
    
    \node[fill=white] at (4.4,3.4) {$\LM(I)$};
    \node[fill=white] at (6.4,4.9) {$\LM(J)$};
\end{tikzpicture}
}

\centerline{
Figure 2: $\LM(J) \subseteq \LM(I)$
}

\centerline{
\begin{tikzpicture}[scale=0.9]
    \draw [->](-0.5,0) -- (7.8,0);
    \draw [->](0,-0.5) -- (0,5.8);
    \node[] at (7.5,-0.3) {$a$};
    \node[] at (-0.3,5.5) {$b$};

    \draw [-, draw=black] (1.6,5.8) -- (1.6,4.5) -- (2.7,4.5) -- (2.7,3.7) -- (3.3,3.7) -- (3.3,2.8) -- (4.2,2.8) -- (4.2,1.3) -- (6.5,1.3) -- (6.5,0.8) -- (7.8,0.8);
    
	\draw [dashed, draw=black] (2.7,5.8) -- (2.7,4.5);
	\draw [dashed, draw=black] (3.3,5.8) -- (3.3,3.7);
	\draw [dashed, draw=black] (4.2,5.8) -- (4.2,2.8);
	\draw [dashed, draw=black] (6.5,5.8) -- (6.5,1.3);
    
    \node[fill=white] at (7.2,1.7) {$\Delta_{I,1}$};
    \node[fill=white] at (5.4,2.1) {$\Delta_{I,2}$};
    \node[] at (2.15,5) {$\Delta_{I,s}$};
    \node[] at (0.9,4.3) {$\LM(g_s)$};
    \node[] at (1.8,3.5) {$\LM(g_{s-1})$};
    \node[] at (2.4,2.38) {$\ddots$};
    \node[] at (3.5,1.1) {$\LM(g_2)$};
    \node[] at (5.8,0.6) {$\LM(g_1)$};

	\filldraw [fill=gray!30, draw=gray!30] (0.4,5.5) rectangle (7.8,5.8);
	\filldraw [fill=gray!30, draw=gray!30] (4.6,4) rectangle (7.8,5.8);
	\filldraw [fill=gray!30, draw=gray!30] (5.5,2.7) rectangle (7.8,5.8);
    \draw [-, draw=black] (0.4,5.8) -- (0.4,5.5) -- (4.6,5.5) -- (4.6,4) -- (5.5,4) -- (5.5,2.7) -- (7.8,2.7);

\end{tikzpicture}
}

\centerline{
Figure 3: $\Delta_{I,i}$
}
\end{multicols}

The following is now clear:

\begin{lem}\label{lem: 2d lex order on Deltas}
For all $1 \leq i < j \leq s$ and all $\alpha \in \Delta_{I,i}$ and $\beta\in \Delta_{I,j}$, we have $\alpha \succ \beta$.\qed
\end{lem}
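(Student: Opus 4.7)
The proof is essentially bookkeeping from the explicit containments in (\ref{eqn: 2d Delta calculation}) combined with the definition of the lexicographic order, so my plan is short and direct.

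My plan is to compare first coordinates. Fix $\alpha = (a_1,a_2) \in \Delta_{I,i}$ and $\beta = (b_1,b_2) \in \Delta_{I,j}$ with $1 \leq i < j \leq s$. Since the lexicographic order prioritises the first coordinate, it will suffice to show that $a_1 > b_1$. From (\ref{eqn: 2d Delta calculation}), $a_1 \geq \gamma_{i,1}$ (this holds in the $i=1$ case and the $i\geq 2$ case alike), and since $j\geq 2$ we have $b_1 < \gamma_{j-1, 1}$. So I am reduced to showing $\gamma_{i,1} \geq \gamma_{j-1,1}$.

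This last inequality is where I need to use the ordering of the $g_k$. By our choice of $G$, $\LM(g_1) \succ \LM(g_2) \succ \dots \succ \LM(g_s)$ in the lexicographic order, which means that for each $k$ we have either $\gamma_{k,1} > \gamma_{k+1,1}$ or else $\gamma_{k,1} = \gamma_{k+1,1}$ and $\gamma_{k,2} > \gamma_{k+1,2}$. In particular the sequence $(\gamma_{k,1})_k$ is weakly decreasing, giving $\gamma_{i,1} \geq \gamma_{i+1,1} \geq \dots \geq \gamma_{j-1,1}$ since $i \leq j-1$. Chaining the inequalities yields $a_1 \geq \gamma_{i,1} \geq \gamma_{j-1,1} > b_1$, hence $\alpha \succ \beta$ by definition of $\preceq_{\mathrm{lex}}$.

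There is essentially no obstacle here; the only subtlety worth flagging is that the containments in (\ref{eqn: 2d Delta calculation}) are only non-vacuous when $\gamma_{i,1} < \gamma_{i-1,1}$ strictly, which will be the case after passing to a minimal Gröbner basis (equivalently, throwing away any $g_k$ whose leading monomial shares its first coordinate with that of an earlier $g_\ell$, which is redundant modulo the earlier element). If $\Delta_{I,i}$ or $\Delta_{I,j}$ is empty the statement is vacuous, so we may assume throughout that both are nonempty, which forces the strict inequality $\gamma_{i-1,1} > \gamma_{i,1}$ used above.
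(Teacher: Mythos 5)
Your main argument is correct, and it is precisely the (unwritten) argument that the paper treats as immediate from Figure~3: from (\ref{eqn: 2d Delta calculation}) one reads off $a_1 \geq \gamma_{i,1}$ and $b_1 < \gamma_{j-1,1}$, and since $\LM(g_1) \succ_{\lex} \dots \succ_{\lex} \LM(g_s)$ forces the first coordinates $\gamma_{k,1}$ to be weakly decreasing, the chain $a_1 \geq \gamma_{i,1} \geq \gamma_{j-1,1} > b_1$ gives $\alpha \succ_{\lex} \beta$. (Note also that you never actually invoke the strict inequality $\gamma_{i-1,1} > \gamma_{i,1}$ in this chain, so the penultimate sentence of your closing paragraph is a red herring.) Where your closing remarks go astray is in the parenthetical characterisation of minimality: having distinct first coordinates among the $\LM(g_k)$ is \emph{not} equivalent to minimality, and if $\gamma_{k,1} = \gamma_{\ell,1}$ with $\ell < k$ then it is $g_\ell$, not $g_k$, whose leading monomial lies in $\LM(g_k) + \mathbb{N}^2$ and is therefore redundant. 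The genuine subtlety you are sensing is real but sits one step earlier, at (\ref{eqn: 2d Delta calculation}) itself: that containment requires the $\LM(g_i)$ to form an antichain under $\leq_\div$ (a minimal Gr\"obner basis), and without that hypothesis it can fail outright rather than merely becoming vacuous. For example, with $J=0$, $\LM(g_1) = (3,5)$ and $\LM(g_2) = (2,1)$, the first coordinates are distinct, yet $(4,2) \in \Delta_{I,2}$ lies outside $[2,3) \times [1,\infty)$, and indeed $\alpha = (3,5) \in \Delta_{I,1}$ satisfies $\alpha \prec_{\lex} (4,2)$, so even the lemma's conclusion fails -- but only because $\LM(g_1) \in \LM(g_2) + \mathbb{N}^2$, i.e.\ $g_1$ is redundant. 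So the hypothesis needed is exactly minimality; given that, your proof is sound.
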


Now, for any $0\leq r\leq s$, define $G_r = (h_1, \dots, h_t, g_1, \dots, g_r)$, and set $I_r = RG_r + J$. (For convenience, we are including $I_0 := J$ and $I_s := I$.) Note that, at the moment, we only know that $I_r$ is a \emph{left} ideal and that $G_r$ is a \emph{left ideal} basis for $I_r$.

\begin{lem}\label{lem: 2d case, condition for f to be in smaller ideal}
Take $f\in I\setminus J$. Then $f\in I_r$ if and only if there exists $j\in J$ such that $\LM(f-j) \succeq \LM(g_r)$.
\end{lem}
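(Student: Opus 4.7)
The forward direction I would handle immediately: if $f \in I_r = Rg_1 + \cdots + Rg_r + J$, write $f = d_1 g_1 + \cdots + d_r g_r + j$ with $d_k \in R$ and $j \in J$; then multiplicativity of $\preceq$ gives $\LM(d_k g_k) = \LM(d_k) + \LM(g_k) \succeq \LM(g_k) \succeq \LM(g_r)$, and Proposition \ref{propn: LM is filtered}(i) yields $\LM(f-j) \succeq \LM(g_r)$.

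For the reverse direction, set $f' := f - j$ and aim to prove $f' \in I_r$. Since $f$ and $j$ lie in $I$, so does $f'$; apply the division algorithm (Theorem \ref{thm: division theorem}) to $f'$ with respect to the full Gröbner basis $G = (h_1, \dots, h_t, g_1, \dots, g_s)$. By Corollary \ref{cor: left ideal membership} the remainder is zero, giving
\[ f' = \sum_{i=1}^t q_i h_i + \sum_{k=1}^s p_k g_k \]
with $\supp(q_i) + \LM(h_i) \subseteq \Delta_{J,i}$ and $\supp(p_k) + \LM(g_k) \subseteq \Delta_{I,k}$. Split this as $\tilde f + \phi$, where $\tilde f := \sum_i q_i h_i + \sum_{k \le r} p_k g_k \in I_r$ and $\phi := \sum_{k > r} p_k g_k$; it then suffices to show $\phi = 0$, for in that case $f = \tilde f + j \in I_r$.

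The crux of the argument will be that leading monomials coming from distinct summands of this decomposition cannot cancel. Since $\preceq$ is a monomial order, each nonzero $p_k g_k$ satisfies $\LM(p_k g_k) = \LM(p_k) + \LM(g_k) \in \Delta_{I,k}$ (and similarly for each nonzero $q_i h_i$); but the sets $\{\Delta_{J,i}\} \cup \{\Delta_{I,k}\}$ are pairwise disjoint by construction, so all these leading monomials are distinct. Iterated application of Proposition \ref{propn: LM is filtered}(ii) then shows that, if $\phi \ne 0$, $\LM(\phi)$ lies in $\bigcup_{k > r} \Delta_{I,k}$, which by Lemma \ref{lem: 2d lex order on Deltas} is strictly $\prec$-below $\LM(g_r)$, while $\LM(\tilde f) \in \bigcup_i \Delta_{J,i} \cup \bigcup_{k \le r} \Delta_{I,k}$ is disjoint from $\LM(\phi)$. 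A final application of Proposition \ref{propn: LM is filtered}(ii) then gives $\LM(f') = \min\{\LM(\tilde f), \LM(\phi)\} \preceq \LM(\phi) \prec \LM(g_r)$, contradicting the assumption $\LM(f-j) \succeq \LM(g_r)$. I expect the only real technicality to be the non-cancellation bookkeeping, but this should fall out cleanly from the disjointness of the $\Delta$-partition built into Theorem \ref{thm: division theorem} combined with Lemma \ref{lem: 2d lex order on Deltas}.
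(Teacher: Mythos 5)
Your proof is correct, and in substance it follows the same route as the paper's: both directions rely on the division theorem, the disjointness of the $\Delta$-partition, Lemma~\ref{lem: 2d lex order on Deltas}, and Proposition~\ref{propn: LM is filtered}. The one notable difference is in the reverse direction: you right-divide $f' = f - j$ by $G$ directly, so that the potential $\phi$-part immediately lives in $\bigcup_{k>r}\Delta_{I,k}$ and Proposition~\ref{propn: LM is filtered}(ii) forces $\LM(f') \prec \LM(g_r)$. The paper instead right-divides $f$ itself, obtaining its own $h$-part $j_0\in J$, and then must transport the conclusion from $j_0$ to the hypothetical $j'$ by observing that $\LM(j_0 - j') \in \LM(J)$ is disjoint from the relevant $\Delta_{I,u}$. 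Your version short-circuits that last comparison and is marginally cleaner, while buying nothing beyond a streamlined exposition; both rest on the same $\Delta$-disjointness bookkeeping. One small unstated step in both proofs: passing from Lemma~\ref{lem: 2d lex order on Deltas} (which compares $\Delta_{I,i}$ to $\Delta_{I,j}$) to the claim that $\bigcup_{k>r}\Delta_{I,k}$ lies strictly $\prec$-below $\LM(g_r)$ uses the staircase description~(\ref{eqn: 2d Delta calculation}) rather than the lemma verbatim, but this is harmless and the paper makes the identical jump.
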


\begin{proof}
Suppose first that $f\in I_r$, so that $f = q'_1 h_1 + \dots + q'_t h_t + q_1 g_1 + \dots + q_r g_r$ for some $q_i, q'_i$. Set $j = q'_1 h_1 + \dots + q'_t h_t\in J$. Then $\LM(f-j) \succeq \min_{1\leq i\leq r}\{\LM(q_ig_i)\}$ by Proposition \ref{propn: LM is filtered}, and so in particular, $\LM(f-j) \succeq \LM(q_ig_i)$ for some $i$. But then $\LM(q_ig_i) \succeq \LM(g_i)$ by Theorem \ref{thm: LM compatibility}, and $\LM(g_i) \succeq \LM(g_r)$ for all $1\leq i\leq r$ by our choice of ordering.

On the other hand, for any $0\neq f\in I$, we can use Theorem \ref{thm: division theorem} to right-divide $f$ by $G$ and obtain an expression of the form $f = q'_1 h_1 + \dots + q'_t h_t + q_1 g_1 + \dots + q_s g_s$, where $\supp(q_i) + \LM(g_i) \subseteq \Delta_{I,i}$ for all $1\leq i\leq s$. Again, set $j = q'_1 h_1 + \dots + q'_t h_t\in J$. If any of $q_{r+1}, \dots, q_s$ is nonzero in this expression, then fix $u$ to be maximal such that $q_u\neq 0$. Then in particular $\LM(q_u g_u) \in \Delta_{I,u}$, and Lemma \ref{lem: 2d lex order on Deltas} implies that $\LM(q_u g_u) \prec \LM(q_i g_i)$ for all $1\leq i\leq u-1$, and hence $\LM(f-j) = \LM(q_u g_u) \in \Delta_{I,u}$.

Now, if $j'\in J$ is arbitrary, then $\LM(j-j') \in \LM(J)$, and in particular we can never have $\LM(j-j') = \LM(f-j)$, as $\LM(J) \cap \Delta_{I,u} = \varnothing$. Proposition \ref{propn: LM is filtered}(ii) now implies that $\LM(f-j') = \min_\preceq\{\LM(f-j), \LM(j-j')\}$, and in particular that $\LM(f-j') \preceq \LM(f-j) \in \Delta_{I,u}$ for all $j'\in J$. But if we suppose additionally that $\LM(f-j') \succeq \LM(g_r)$ for some $j'\in J$, this leads to a contradiction: so we must have $q_{r+1} = \dots = q_s = 0$, and so $f\in I_r$.
\end{proof}

\begin{lem}\label{lem: smaller ideal is two-sided with the right groebner basis}
$I_r$ is a two-sided ideal, and $G_r$ is a Gr\"obner basis for $I_r$.
\end{lem}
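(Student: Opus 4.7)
The plan is to verify both claims by reducing them to Lemma \ref{lem: 2d case, condition for f to be in smaller ideal} and to the properties of right-division already established.

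For the two-sided property of $I_r$, I would first observe that because $J$ is two-sided, $J R \subseteq J \subseteq I_r$, so (since $I_r = Rh_1 + \dots + Rh_t + Rg_1 + \dots + Rg_r$) it suffices to show that $g_i y \in I_r$ for every $1 \leq i \leq r$ and every $y \in R$. Fix such $g_i$ and $y$, and assume without loss of generality that $g_i y \neq 0$. Since $I$ is two-sided we have $g_i y \in I$. If $g_i y \in J$ there is nothing to prove; otherwise $g_i y \in I \setminus J$, and by Lemma \ref{lem: 2d case, condition for f to be in smaller ideal} it suffices to find some $j \in J$ with $\LM(g_i y - j) \succeq \LM(g_r)$. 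Take $j = 0$: by multiplicativity of $\preceq$ (Theorem \ref{thm: LM compatibility}) and additivity, $\LM(g_i y) = \LM(g_i) + \LM(y) \succeq \LM(g_i) \succeq \LM(g_r)$, since $i \leq r$ and $\LM(g_1) \succ \dots \succ \LM(g_s)$.

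For the Gr\"obner basis claim, the inclusion $\langle \LM(G_r) \rangle \subseteq \LM(I_r)$ is immediate from the definition. For the reverse inclusion, take nonzero $f \in I_r$. Using the argument in the proof of Lemma \ref{lem: 2d case, condition for f to be in smaller ideal}, I would right-divide $f$ by $G$ via Theorem \ref{thm: division theorem}; since $f \in I$, Corollary \ref{cor: left ideal membership} gives remainder zero, and the condition $f \in I_r$ forces the higher quotients $q_{r+1}, \dots, q_s$ to vanish (otherwise the argument of that proof places $\LM(f - j')$ in some $\Delta_{I,u}$ with $u > r$ for every $j' \in J$, contradicting $f \in I_r$ via the same Lemma). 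Thus $f = \sum_{k=1}^{t} q'_k h_k + \sum_{i=1}^{r} q_i g_i$ with the supports of the summands landing in the pairwise disjoint regions $\Delta_{J,k}$ and $\Delta_{I,i}$ respectively.

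It remains to extract $\LM(f)$ from this expression. Because the sets $\Delta_{J,k}$ and $\Delta_{I,i}$ are mutually disjoint, the least monomials $\LM(q'_k h_k)$ and $\LM(q_i g_i)$ of the nonzero summands are all distinct. Iterating Proposition \ref{propn: LM is filtered}(ii) over the finitely many nonzero summands then shows that $\LM(f)$ equals the $\preceq$-minimum of these least monomials, which lies in $\LM(h_k) + \mathbb{N}^n$ or in $\LM(g_i) + \mathbb{N}^n$ for some $i \leq r$, and hence in $\langle \LM(G_r)\rangle$. The main subtlety, already absorbed into Lemma \ref{lem: 2d case, condition for f to be in smaller ideal}, is the disjointness of the $\Delta$-regions coming from the lexicographic order in two variables; once that is in hand, both parts are short verifications.
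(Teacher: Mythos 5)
Your proof is correct, and it takes a genuinely different route in part.

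For two-sidedness, your argument is a streamlined variant of the paper's. The paper takes an arbitrary $f\in I_r\setminus J$, invokes the forward direction of Lemma~\ref{lem: 2d case, condition for f to be in smaller ideal} to extract a $j\in J$ with $\LM(f-j)\succeq\LM(g_r)$, and then reapplies the lemma to $fa$. You reduce to the left-ideal generators $g_1,\dots,g_r$ (which is legitimate since $Ja\subseteq J\subseteq I_r$ and $I_r$ is a left ideal), at which point $j=0$ already works because $\LM(g_iy)=\LM(g_i)+\LM(y)\succeq\LM(g_i)\succeq\LM(g_r)$ by multiplicativity and additivity. This avoids producing the auxiliary $j$ at the cost of the (easy) observation that a finitely generated left ideal is a right ideal once each generator times $R$ lands back in it.

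For the Gr\"obner basis claim, your approach diverges more substantially from the paper's. The paper invokes Buchberger's criterion (Theorem~\ref{thm: S-elements determine Groebner bases}): it right-divides each $S$-element $S(h_i,h_j)$, $S(h_i,g_j)$, $S(g_i,g_j)$ by the big basis $G$, uses the reverse-direction argument of Lemma~\ref{lem: 2d case, condition for f to be in smaller ideal} to force the quotients $q_{r+1},\dots,q_s$ to vanish, and then appeals to the uniqueness in Theorem~\ref{thm: division theorem} to conclude that the remainder on division by $G_r$ is zero. You instead verify the defining condition $\LM(I_r)\subseteq\langle\LM(G_r)\rangle$ directly: take any nonzero $f\in I_r$, right-divide by $G$, use the same vanishing-of-higher-quotients argument, and then extract $\LM(f)$ as the $\preceq$-minimum of the $\LM(q'_kh_k)$ and $\LM(q_ig_i)$, which are pairwise distinct because the $\Delta$-regions are pairwise disjoint by construction. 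This yields the conclusion without passing through Buchberger at all, and it exposes the real mechanism---the disjointness of the $\Delta$-regions---more transparently. The paper's route has the advantage of reusing a black-box criterion; yours has the advantage of being shorter and self-contained once Lemma~\ref{lem: 2d case, condition for f to be in smaller ideal} is available. One small point worth noting in your write-up: when $f\in J$, Lemma~\ref{lem: 2d case, condition for f to be in smaller ideal} does not formally apply (it is stated for $f\in I\setminus J$), but then right-division of $f$ by $G_J$ already has zero remainder, and by uniqueness this is also the right-division by $G$, so the higher quotients vanish trivially. The same minor edge case is present, unremarked, in the paper's proof.
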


\begin{proof}
By assumption, $I_r$ is a left ideal.

Take $f\in I_r$ and $a\in R$: we will show that $fa\in I_r$, and hence that $I_r$ is a two-sided ideal. If $f\in J$, then $fa\in J\subseteq I_r$, so suppose not. Then we may apply Lemma \ref{lem: 2d case, condition for f to be in smaller ideal} to see that there exists $j\in J$ satisfying $\LM(f-j) \succeq \LM(g_r)$. On the other hand, $0\neq fa\in I$, and $\LM((f-j)a) \geq_\div \LM(f-j) \succeq \LM(g_r)$, so a second application of Lemma \ref{lem: 2d case, condition for f to be in smaller ideal} shows that $fa\in I_r$. So $I_r$ is a two-sided ideal.

To show that $G_r$ is a Gr\"obner basis, we will apply Theorem \ref{thm: S-elements determine Groebner bases}. We know that each $S(h_i, h_j)\in J$, so $\overline{S(h_i, h_j)}^{G_J} = 0$ and hence $\overline{S(h_i, h_j)}^{G_r} = 0$. Now take $f = S(h_i, g_j)\in I_r$ (for $1\leq i\leq t$ and $1\leq j\leq r$) or $f = S(g_i, g_j) \in I_r$ (for $1\leq i,j\leq r$), and apply Theorem \ref{thm: division theorem} to right-divide $f$ by $G$, getting an expression of the form
\begin{align}\label{eqn: expansion of f}
f = q'_1 h_1 + \dots + q'_t h_t + q_1 g_1 + \dots + q_s g_s,
\end{align}
 where $\supp(q'_j) + \LM(h_j) \in \Delta_{J,j}$ and $\supp(q_i) + \LM(g_i) \in \Delta_{I,i}$. Now arguing as in the proof of Lemma \ref{lem: 2d case, condition for f to be in smaller ideal} will show that $q_{r+1} = \dots = q_s = 0$, and by the uniqueness in Theorem \ref{thm: division theorem}, equation (\ref{eqn: expansion of f}) must now be the representation of $f$ afforded by right-division by $G_r$, and in particular the remainder must be $0$.
\end{proof}

\begin{cor}\label{cor: 2d rings are polynormal}
$R$ is polynormal.
\end{cor}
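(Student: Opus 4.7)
The goal is, for any pair of two-sided ideals $J \subsetneq I$, to exhibit a nonzero normal element of $I/J$; my candidate is $g_1 + J$. Since $g_1 \in I_1 \subseteq I$ and $g_1 \notin J$ this is a nonzero element of $I/J$, so the only content is normality, which amounts to the equality of two-sided ideals
\[
Rg_1 + J \;=\; g_1 R + J.
\]
The inclusion $g_1 R + J \subseteq Rg_1 + J$ is immediate: Lemma~\ref{lem: smaller ideal is two-sided with the right groebner basis} already gives that $I_1 = Rg_1 + J$ is two-sided, so it absorbs $g_1 R$.

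For the reverse inclusion I would show that every $rg_1$ with $r \in R$ can be written as $g_1 q + j$ for some $q \in R$ and $j \in J$. The strategy is a ``left-division of $rg_1$ by $g_1$ modulo $J$'', performed by transfinite induction on $(\mathbb{N}^2, \preceq)$ in the style of the proof of Theorem~\ref{thm: division theorem}. Starting from $f_0 := rg_1$, at each successor stage I peel off the leading term: when $\LM(f_\alpha) \in \Delta_{I,1}$ I subtract a suitable left-multiple $g_1\cdot c_\alpha \bm{x}^{\gamma_\alpha}$ (absorbed into the accumulated $q$) to kill $\LT(f_\alpha)$; when $\LM(f_\alpha) \in \LM(J)$ I subtract a suitable element of $J$ (absorbed into the accumulated $j$). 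Both operations strictly increase the least monomial. At limit ordinals I invoke Proposition~\ref{propn: convergent subsequences in lex ordering} to pass to convergent subsequences of $q$, $j$ and of the remainder. In the end, the accumulated remainder has $\LM$ above every element of $\mathbb{N}^2$ and is therefore zero, giving $rg_1 = g_1 q + j$.

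The two-dimensional input that makes this succeed without developing a full mirror theory of right-sided Gr\"obner bases is the following observation read off the staircase equation~(\ref{eqn: 2d Delta calculation}): if $f \in I$ satisfies $\LM(f) \succeq \LM(g_1)$, then $\LM(f) \in \Delta_{I,1} \cup \LM(J)$. Indeed, for each $i \geq 2$ the strip $\Delta_{I,i} \subseteq [\gamma_{i,1}, \gamma_{i-1,1}) \times [\gamma_{i,2}, \infty)$ has first coordinate strictly less than $\gamma_{1,1}$, whereas lex-dominance over $\LM(g_1) = (\gamma_{1,1}, \gamma_{1,2})$ forces the first coordinate of $\LM(f)$ to be at least $\gamma_{1,1}$. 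Since $f_0 = rg_1$ starts with $\LM(f_0) = \LM(r) + \LM(g_1) \in \Delta_{I,1}$, and both types of reduction step preserve both $\LM(f_\alpha) \succeq \LM(g_1)$ and $f_\alpha \in I$, every intermediate remainder is reducible either by $g_1$-on-the-left or by $J$, and the induction never gets stuck.

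The main technical obstacle is the transfinite bookkeeping and convergence argument for the accumulated multipliers and for the remainder, which is entirely parallel to Cases~1--4 in the proof of Theorem~\ref{thm: division theorem} but phrased in the ``left-divide by $g_1$'' direction rather than the standard right-division direction. Once that machinery is carried through, $Rg_1 \subseteq g_1 R + J$ follows, $g_1 + J$ is a nonzero normal element of $I/J$, and $R$ is polynormal.
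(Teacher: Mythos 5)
Your proposal and the paper's proof head in \emph{opposite} directions, and in effect complement each other. The paper establishes $g_1R + J \subseteq Rg_1 + J$: since $I_1 = Rg_1 + J$ is two-sided (Lemma~\ref{lem: smaller ideal is two-sided with the right groebner basis}), $g_1r\in I_1$ for every $r$, and right-dividing $g_1r$ by the Gr\"obner basis $G_1 = (h_1,\dots,h_t,g_1)$ via Corollary~\ref{cor: left ideal membership} yields the explicit expression $g_1r = q_1h_1 + \dots + q_th_t + r'g_1$, i.e.\ $g_1r - r'g_1\in J$. You are right that this inclusion already follows from the two-sidedness of $I_1$ alone, and you instead spend your effort on the reverse inclusion $Rg_1 + J \subseteq g_1R + J$, which the paper's proof does not address. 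So your argument supplies the other half of normality (that $(R/J)(g_1+J) \subseteq (g_1+J)(R/J)$) and the paper supplies the half you call immediate.

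Your reverse-direction argument is sound in outline. The two-dimensional observation is correct: if $f\in I$ is nonzero with $\LM(f)\succeq\LM(g_1)$ in lex, then $\LM(f)$ cannot land in $\Delta_{I,i}$ for $i\geq 2$ (those strips have first coordinate strictly below $\gamma_{1,1}$), hence $\LM(f)\in\Delta_{I,1}\cup\LM(J)$, so the transfinite reduction never stalls; and both reduction types preserve the invariants $f_\alpha\in I$ and $\LM(f_\alpha)\succeq\LM(g_1)$. One small slip: $\LM(f_0) = \LM(r)+\LM(g_1)$ lies in $\Delta_{I,1}\cup\LM(J)$ but need not lie in $\Delta_{I,1}$, as $\LM(r)+\LM(g_1)$ may already sit above some $\LM(h_j)$; the induction is unaffected since both cases reduce. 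When writing up the transfinite bookkeeping it is cleanest to realise each $J$-reduction as a left-multiple $c_\alpha\bm{x}^{\gamma_\alpha}h_j$ of a specific $h_j\in G_J$, so that the accumulated $J$-part is visibly $q'_1h_1 + \dots + q'_th_t$ and the convergence apparatus of Proposition~\ref{propn: convergent subsequences in lex ordering} applies exactly as in Cases 1--4 of the proof of Theorem~\ref{thm: division theorem}. Done carefully, this gives the full equality $Rg_1 + J = g_1R + J$, whereas the paper's proof as written establishes only the inclusion $g_1R + J \subseteq Rg_1 + J$.
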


\begin{proof}
The normal element separating $I$ and $J$ is $g_1 + J \in I/J$. To see this, let $r\in R$ be arbitrary: then Lemma \ref{lem: smaller ideal is two-sided with the right groebner basis} implies that $I_1$ is a two-sided ideal, so $g_1r\in I_1$. The same lemma says that $G_1 = (h_1, \dots, h_t, g_1)$ is a Gr\"obner basis for $I_1$, so by Corollary \ref{cor: left ideal membership}, we must have $\overline{g_1 r}^{G_1} = 0$. Explicitly, there exist $q_1, \dots, q_t$ and $r'$ in $R$ such that $g_1 r = q_1 h_1 + \dots + q_t h_t + r' g_1$, or in other words $g_1 r - r' g_1 \in J$.
\end{proof}

We end this section by asking when the methods of this section will work in greater generality. More precisely:

\begin{qn}\label{qn}
Let $J\subseteq I$ be ideals of a ring $R = A[[\bm{x}; \bm{\sigma}, \bm{\delta}]]^n$, with $I\neq 0$. Suppose $J$ has fixed Gr\"obner basis $G_J = (h_1, \dots, h_t)$ and $I$ has fixed Gr\"obner basis $G = (h_1, \dots, h_t, g_1, \dots, g_s)$, where no $g_i\in J$. Define the \emph{truncated} bases $G_r = (h_1, \dots, h_t, g_1, \dots, g_r)$ for all $1\leq r\leq s$, and the left ideals $I_r = RG_r$. For which $R$, $I$ and $G$ do we have that (i) $I_r$ is a \emph{two-sided} ideal, and (ii) $G_r$ is a (left) Gr\"obner basis for $I_r$, for all $1\leq r\leq s$?
\end{qn}

\begin{rks}
$ $

\begin{enumerate}
\item We have not included $G_J$ in the above question, as the details of $G_J$ played very little role in the methods of this section.
\item It is sufficient for $G$ to satisfy an analogue of Lemma \ref{lem: 2d lex order on Deltas}. However, this does not hold in general when $R$ has dimension at least three, even if $R$ is commutative.
\item Let $R = \mathbb{F}_p[[x,y,z]]$ be the \emph{commutative} power series ring, and let $G = (y^2z+xz^2, yz^2, z^3)$. Set $J = 0$ and $I = RG$. Then $G$ is a (minimal) Gr\"obner basis for $I$ with $3$ elements, so in the notation of the question, $G = G_3$. But the truncated basis $G_2$ is \emph{not} a Gr\"obner basis for $I_2$, as $S(y^2z+xz^2, yz^2)$ has nonzero remainder on division by $G_2$, so $G$ does not satisfy the properties of the question. However, the (non-minimal!) Gr\"obner basis $G' = (xz^3, y^2z+xz^2, yz^2, z^3)$ for $I$ \emph{does} satisfy the properties of the question.
\end{enumerate}
\end{rks}

\bibliography{../../biblio/biblio-all}
\bibliographystyle{plain}

\end{document}